\documentclass[11pt]{amsart}
\usepackage[utf8]{inputenc}

\usepackage{amsmath,amsthm}
\usepackage{amssymb}
\usepackage{epsfig,tikz}
\usetikzlibrary{cd}
\usepackage{graphicx}
\usepackage{multicol}
\usepackage{asymptote}
\usepackage[all,cmtip]{xy}
\usepackage{mathtools}
\usepackage{enumitem}
\usepackage[tmargin=1in,bmargin=1in,lmargin=1in,rmargin=1in]{geometry}

\usepackage{hyperref}
\hypersetup{
    unicode=false,          
    pdftoolbar=true,        
    pdfmenubar=true,        
    pdffitwindow=false,     
    pdfstartview={FitH},    
    pdfnewwindow=true,      
    colorlinks=true,       
    linkcolor=blue,          
    citecolor=red,        
    filecolor=magenta,      
    urlcolor=cyan           
}

\newcommand{\Mbar}{\overline{M}}

\newcommand{\x}{\underline{x}}
\newcommand{\y}{\underline{y}}
\newcommand{\uu}{\underline{u}}
\newcommand{\vv}{\underline{v}}
\newcommand{\ww}{\underline{w}}

\newcommand{\bP}{\mathbb{P}}
\newcommand{\PP}{\mathbb{P}}

\newcommand{\beq}{\begin{equation}}
\newcommand{\eeq}{\end{equation}}

\newcommand{\injto}{\hookrightarrow}

\newcommand{\defn}{\textbf}

\newcommand{\emb}{\Omega}

\renewcommand{\MR}{\mathrm{MR}}

\newcommand{\localcurve}{C''}
\newcommand{\localyvector}{y''}
\newcommand{\localcurvewithpn}{C'''}

\newtheorem{thm}{Theorem}
\newtheorem{lemma}[thm]{Lemma}
\newtheorem{prop}[thm]{Proposition}
\newtheorem{corollary}[thm]{Corollary}

\numberwithin{thm}{section}
\numberwithin{equation}{section}
\numberwithin{figure}{section}

\theoremstyle{definition}

\newtheorem{example}[thm]{Example}
\newtheorem{definition}[thm]{Definition}
\newtheorem{notation}[thm]{Notation}

\newtheorem{remark}[thm]{Remark}

\newtheorem{strategy}[thm]{Strategy}

\newtheorem*{MainThm}{Theorem \ref{thm:main}}

\title[A proof of a conjecture of Monin and Rana on $\Mbar_{0,n}$]{A proof of a conjecture by Monin and Rana on \\equations defining $\overline{M}_{0,n}$}

\author{Maria Gillespie}
\thanks{Maria Gillespie was partially supported by NSF DMS award number 2054391.}
\address{Department of Mathematics, Colorado State University, Fort Collins, CO, USA} \email{maria.gillespie@colostate.edu} 

\author{Sean T. Griffin}
\thanks{Sean T. Griffin was partially supported by NSF Grant DMS-1439786 while in residence at the Institute for Computational and Experimental Research in Mathematics in Providence, RI in Spring 2021.}
\address{Department of Mathematics, University of California Davis, Davis, CA, USA}
\email{stgriffin@ucdavis.edu}

\author{Jake Levinson}
\thanks{Jake Levinson was partially supported by NSERC Discovery Grant RGPIN-2021-04169.}
\address{Department of Mathematics, Simon Fraser University, Burnaby, BC, Canada}
\email{jake\_levinson@sfu.ca}

\date{\today}

\begin{document}

\maketitle

\begin{abstract}
    Monin and Rana conjectured a set of equations defining the image of the moduli space $\overline{M}_{0,n}$ under an embedding into $\mathbb{P}^1\times \cdots\times \mathbb{P}^{n-3}$ due to Keel and Tevelev and verified the conjecture for $n\leq 8$ using Macaulay2. We prove this conjecture for all $n$.
\end{abstract}

\section{Introduction}
Let $\Mbar_{0,n}$ denote the Deligne--Mumford moduli space of connected, $n$-marked stable curves of genus zero. Keel and Tevelev \cite{KeT}, in their work on the log canonical embedding of $\overline{M}_{0,n}$, defined an embedding $$\Omega_{n}:\Mbar_{0,n+3}\injto \mathbb{P}^1\times\PP^2\times \PP^3 \times \cdots\times\mathbb{P}^{n},$$
following Kapranov's description of $\overline{M}_{0,n}$ as a space of Veronese curves \cite{Ka1}. The composition of $\Omega_n$ with the Segre embedding gives the embedding corresponding to the log canonical divisor $K_{\overline{M}_{0,n}} + \partial \overline{M}_{0,n}$, making it and $\Omega_n$ in some sense the most natural ways to realize $\overline{M}_{0,n}$ as a projective variety. The multidegrees of $\Omega_n$ and similar embeddings have also recently been studied combinatorially~\cite{CGM, GGL-tournaments, GGL-hyperplanes,Goldner,silversmith2021crossratio}.

In \cite{MonRan}, Monin and Rana gave a set of cubic equations in the homogeneous coordinates of $\mathbb{P}^1\times\cdots\times\mathbb{P}^{n}$ that conjecturally define $\Omega_{n}(\Mbar_{0,n+3})$ as a subscheme of $\mathbb{P}^1\times\cdots\times\mathbb{P}^{n}$. The cubic equations arise as the $2\times 2$ minors of several $2\times k$ matrices, for various $k$. In this respect Monin and Rana's equations resemble the well-known equations for Veronese and Segre embeddings of projective spaces and the Pl\"ucker equations for Grassmannians. We will refer to the ideal generated by these equations as $I_{n}$, and the subscheme cut out by $I_{n}$ as $\MR_{n}$.

\begin{example}\label{ex:M05}
  For $n=5$, choosing coordinates $[u:v]\times [x:y:z]$ for $\PP^1\times \PP^2$, the ideal $I_2$ is generated by the unique $2\times 2$ minor of the matrix $$\begin{pmatrix}
  u(x-z) & v(y-z) \\
  x & y
  \end{pmatrix}.$$ Thus, $\Mbar_{0,5}$ is cut out by the degree (1,2) equation $uy(x-z)=vx(y-z)$ in $\PP^1\times \PP^2$. 
\end{example}

In this paper, we prove this conjecture (\cite[Conjecture 1.2]{MonRan}):

\begin{thm}\label{thm:main}
  The ideal $I_{n}$ cuts out $\Omega_{n}(\Mbar_{0,n+3})$ scheme-theoretically in $\PP^1\times \cdots \times \PP^{n}$.  That is, we have $\MR_{n}=\Omega_{n}(\Mbar_{0,n+3})$ as subschemes.
\end{thm}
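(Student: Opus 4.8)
The plan is to prove the two inclusions of subschemes and then upgrade set-theoretic containment to a scheme-theoretic equality by a reducedness argument. The inclusion $\Omega_n(\Mbar_{0,n+3}) \subseteq \MR_n$ (equivalently $I_n \subseteq I(\Omega_n(\Mbar_{0,n+3}))$) is the routine direction: I would write down Kapranov's explicit Veronese/cross-ratio parametrization of the image coordinates in each factor $\PP^d$ and substitute it into each $2\times 2$ minor generating $I_n$, checking that every minor vanishes identically. Since the minors are designed to encode cross-ratio compatibility between consecutive factors together with the rational-normal-curve conditions inside each $\PP^d$ (as in Example~\ref{ex:M05}), this substitution should reduce to an identity among the underlying point coordinates $z_1,\dots,z_{n+3}$.

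For the reverse inclusion I would first make the following reduction. Because $\Mbar_{0,n+3}$ is smooth and irreducible and $\Omega_n$ is a closed embedding, $\Omega_n(\Mbar_{0,n+3})$ is an integral subscheme of dimension $n = \dim \Mbar_{0,n+3}$. Hence if I can show that $\MR_n$ is \emph{also} integral of dimension $n$, then the already-established closed immersion $\Omega_n(\Mbar_{0,n+3}) \injto \MR_n$ of integral schemes of equal dimension forces equality. So the entire problem reduces to proving that $I_n$ is prime of the expected dimension, i.e.\ that $\MR_n$ is reduced, irreducible, and of dimension $n$.

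To establish this I would argue by induction on $n$ using the projection $\pi\colon \PP^1\times\cdots\times\PP^n \dashrightarrow \PP^1\times\cdots\times\PP^{n-1}$ that drops the last factor, which is compatible with the forgetful morphism $\Mbar_{0,n+3}\to\Mbar_{0,n+2}$. The base case is covered by Monin and Rana's small-$n$ verification. For the inductive step I would split the generators of $I_n$ into those pulled back from the first $n-1$ factors, which I expect to generate $I_{n-1}$ so that $\pi(\MR_n)=\MR_{n-1}=\Omega_{n-1}(\Mbar_{0,n+2})$ by induction, and the ``new'' minors involving the coordinates of $\PP^n$. Over a point of the base these new minors should cut out exactly the fiber of the universal curve --- a rational normal curve in $\PP^n$ --- so that $\MR_n\to\MR_{n-1}$ is a fibration by one-dimensional loci, yielding $\dim \MR_n = n$ and, together with integrality of base and fibers, irreducibility of the total space.

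The hard part will be controlling the scheme structure, and reducedness in particular, uniformly over the boundary. Over the interior the fibers are smooth rational curves, and a Jacobian computation along the (smooth) set-theoretic image should show $\MR_n$ is smooth, hence reduced, there. The delicate locus is the boundary of $\Mbar_{0,n+3}$, where the universal curve degenerates to a nodal, reducible configuration and several determinantal conditions drop rank simultaneously; here I must rule out extra irreducible components and embedded points coming from the interaction of minors drawn from different matrices. I expect this to be the main obstacle, and I would address it by matching $I_n$ with the equations of smaller moduli spaces along each boundary stratum $\Mbar_{0,A}\times\Mbar_{0,B}$, using the inductive hypothesis to conclude that the scheme structure there is again reduced and of the correct dimension.
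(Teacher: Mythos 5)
Your overall reduction is sound and is in fact equivalent to what the paper does: since $\Omega_n(\Mbar_{0,n+3})\injto \MR_n$ is a closed immersion of an integral $n$-dimensional scheme, it suffices to show $\MR_n$ is integral of dimension $n$ (the paper achieves this by proving set-theoretic equality and then showing $\dim T_{\vec{x}}\MR_n\le n$ at every point, so that $\MR_n$ is smooth, hence integral). The problem is that both of the key steps in your execution are asserted rather than proved, and one of the assertions is false as stated. First, the claim that over each point of $\MR_{n-1}$ the new minors ``cut out exactly the fiber of the universal curve --- a rational normal curve in $\PP^n$'' is precisely the set-theoretic heart of the theorem, and it is only true over the interior. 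Over a boundary point the universal-curve fiber is a reducible nodal curve whose image under $\psi_n$ is a degenerate, reducible configuration, so your fibers are connected but \emph{not} irreducible; the standard ``irreducible base plus irreducible equidimensional fibers implies irreducible total space'' argument therefore does not apply, and identifying the fiber of the minors with this degenerate image is exactly what the paper's Section~\ref{sec:set-theoretic} (the strong-separation combinatorics, Lemmas \ref{lem:strongsep}--\ref{lem:monochromatic}) is needed for. You cannot take it as given.

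Second, reducedness at the boundary is left to a vague matching with boundary strata, and the interior Jacobian computation only yields \emph{generic} reducedness. Since $I_n$ has far more generators than the codimension (already for $n=3$ there are five cubics cutting out a codimension-$3$ subvariety), $\MR_n$ is not visibly a complete intersection or Cohen--Macaulay, so you cannot upgrade generic reducedness to reducedness by Serre's criterion for free; you must actually rule out embedded points at every boundary point. This is where the real work lies: the paper does it by linearizing the equations at each point and exhibiting $n-1$ new independent linear conditions on the tangent space beyond those pulled back from $\MR_{n-1}$ (Strategy~\ref{strat:summary}), and even then there is an exceptional case --- when $v_n$ is trivalent with two nodes --- where the $\PP^n$-coordinates supply only $n-2$ conditions and an extra condition purely in the lower factors must be produced and shown to be independent (Proposition~\ref{prop:extra}, Lemma~\ref{lem:nonzero-coord}). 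Your proposal does not anticipate this phenomenon, and without some concrete mechanism at boundary points the argument does not close.
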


Monin and Rana previously showed by direct calculation that, for all $n$, the image $\Omega_{n}(\Mbar_{0,n+3})$ satisfies the equations generating $I_{n}$:

\begin{prop}[{\cite[Lemma 1.1]{MonRan}}]\label{prop:inclusion}
  We have $\Omega_{n}(\Mbar_{0,n+3})\subseteq \MR_{n}$.
\end{prop}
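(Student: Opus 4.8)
The plan is to reduce the scheme-theoretic containment to a set-theoretic check on a dense open subset. Since $\Omega_n$ is a closed embedding and $\Mbar_{0,n+3}$ is reduced and irreducible, its image $\Omega_n(\Mbar_{0,n+3})$ is a reduced, irreducible closed subscheme, so its ideal is radical. The subscheme $\MR_n = V(I_n)$ is closed. Hence, to prove $\Omega_n(\Mbar_{0,n+3}) \subseteq \MR_n$ as \emph{subschemes} it suffices to show $I_n$ is contained in the (radical) ideal of the image, which by the Nullstellensatz amounts to showing that every generator of $I_n$ vanishes at every closed point of $\Omega_n(\Mbar_{0,n+3})$. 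Because the open locus $\Omega_n(M_{0,n+3})$ is dense and the generators are continuous, it further suffices to verify this vanishing only on the image of the smooth locus $M_{0,n+3}$.

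Next I would write $\Omega_n$ explicitly on $M_{0,n+3}$ using Kapranov's realization of $\Mbar_{0,n+3}$ via Veronese curves and the resulting Keel--Tevelev coordinates. A point of $M_{0,n+3}$ is a configuration of $n+3$ distinct points on $\PP^1$, which after normalizing three of the markings (say to $0$, $1$, $\infty$) is recorded by the remaining coordinates; the image in the $i$-th factor $\PP^i$ is then an explicit tuple of monomial and linear expressions in these coordinates dictated by the rational normal curve structure. I would then substitute this parametrization into each of the $2\times k$ matrices whose minors generate $I_n$.

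The heart of the argument is to show that, after this substitution, each Monin--Rana matrix has rank at most $1$, i.e., its two rows become proportional. Equivalently, I would exhibit a single rational function on $M_{0,n+3}$ (the common row-ratio) and verify that scaling the bottom row by it reproduces the top row entrywise, which collapses every $2\times 2$ minor into an identity among the point-coordinates. These identities should follow directly from the defining relations of the rational normal curve (equivalently, from the cross-ratio relations encoded by Kapranov's description), exactly in the spirit of the single equation $uy(x-z)=vx(y-z)$ obtained in Example~\ref{ex:M05}.

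The main obstacle I anticipate is bookkeeping rather than conceptual difficulty: pinning down the precise coordinate form of $\Omega_n$ across all $n$ factors, matching each factor to the various matrices Monin and Rana construct, and establishing row-proportionality uniformly in $n$ rather than case by case. Once the parametrization is made fully explicit and the correct common ratio is identified for each matrix, the minor identities themselves should be routine consequences of the Veronese relations, yielding the inclusion of Proposition~\ref{prop:inclusion}.
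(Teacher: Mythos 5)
Your proposal is correct and is essentially the standard direct verification: the paper itself does not prove this proposition but quotes it from Monin--Rana, who establish it by exactly the calculation you describe, and your reduction to checking the generators on the dense interior locus is legitimate since the image is a reduced closed subscheme (so its ideal is radical and is detected on closed points of any dense subset). The computation you defer does go through cleanly: normalizing $p_a=0$ so that $x_m^{(i)}=\frac{p_m}{p_m-p_i}$ by Equation~\eqref{eq:kap-interior}, one finds $x_m^{(i)}\bigl(x_m^{(j)}-x_i^{(j)}\bigr)=\frac{p_j}{p_j-p_i}\,x_m^{(j)}$ for every column index $m$, so the two rows of $\mathrm{Mat}_{i,j}$ are proportional with common ratio $\frac{p_j}{p_j-p_i}$ and all $2\times 2$ minors vanish on the interior, hence everywhere.
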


They also proved Theorem \ref{thm:main} for $n+3\le 8$ using Macaulay2. Theorem \ref{thm:main} shows that $I_n$ defines $\Mbar_{0,n+3}$ as a subscheme, for all $n$. We note, however, that Monin and Rana have also shown that $I_n$ is in general not saturated \cite{MonRan}.

Using Proposition \ref{prop:inclusion} as a starting point, we prove the theorem in two main steps: first, we show the equality $\MR_n = \Omega_n(\overline{M}_{0,n+3})$ holds set-theoretically (Section \ref{sec:set-theoretic}). We then show it holds scheme-theoretically as well (Section \ref{sec:scheme-theoretic}). 

For the set-theoretic equality, by Proposition \ref{prop:inclusion} it suffices to show that every point $\vec{x} \in \PP^1\times \cdots \times \PP^{n}$ satisfying the equations defining $\MR_{n}$ is the image under $\Omega_{n}$ of some curve $C\in \Mbar_{0,n+3}$.  We first show this for points $\vec{x}$ having all nonzero and distinct coordinates in every $\PP^i$ factor (Proposition \ref{prop:interior}), which correspond to curves in the interior of $\Mbar_{0,n+3}$.  For points on the boundary, we use the boundary stratification via trees and an inductive combinatorial construction to build an appropriate tree for every $\vec{x} \in \MR_n$.  We then combine this construction with the interior case to construct the specific curve $C$ associated to $\vec{x}$ (Theorem \ref{thm:set-theoretic}).

For equality as schemes, the key idea is to show that the tangent spaces agree in dimension at every point. The result then follows by the set-theoretic equality and the fact that $\Mbar_{0,n+3}$ itself is smooth. We proceed by showing, by induction on $n$, that there are sufficiently many linearly independent equations cutting out the tangent space at any given point $\vec{x}\in \MR_{n}$ (Strategy \ref{strat:summary}).  Restricting attention to the tangent space makes the analysis considerably more tractable, since it reduces the cubic equations defining $\MR_{n}$ to linear equations on the tangent space.  We construct sufficiently many linearly independent tangent equations from the Monin--Rana equations via a combinatorial analysis of the branches of the dual tree of the curve corresponding to $\vec{x}$, at the vertex where the $n$th marked point is attached. As such, our approach also constructs an explicit list of $\binom{n} {2}$ equations defining $T_{\vec{x}}(\overline{M}_{0,n+3}) \subseteq T_{\vec{x}}(\mathbb{P}^1 \times \cdots \times\mathbb{P}^n)$. (See Example \ref{ex:example}.)

\subsection{Acknowledgments}
We thank Renzo Cavalieri and Rick Miranda for helpful conversations pertaining to this work.

\section{Background and Notation}

We now establish some notation, definitions, and known results that we will use throughout.

\subsection{The moduli spaces \texorpdfstring{$\Mbar_{0,n+3}$}{M0n-bar}}

The moduli space $\Mbar_{0,n}$ parameterizes genus $0$ stable curves with $n$ labeled marked points.  Each such curve consists of a finite number of copies of $\PP^1$ glued together at simple nodes, such that each $\PP^1$ component is \textbf{stable}, meaning that it has at least three total \textbf{special points} (defined as nodes or marked points).  In this paper, we draw the irreducible $\PP^1$ components as circles, as in Figure \ref{fig:branches}.   The moduli space is connected, smooth, and proper.

Since $\Mbar_{0,n}$ is only nonempty for $n\ge 3$, we shift indices and consider the moduli space $\Mbar_{0,n+3}$. We fix the totally ordered set $$S=\{a<b<c<1<2<\cdots<n\}$$ and label the marked points on any curve $C\in \Mbar_{0,n+3}$ by $a,b,c,1,2,\ldots,n$.  We therefore will refer to the moduli space as $\Mbar_{0,S}$ rather than $\Mbar_{0,n+3}$ throughout.  

\begin{remark}
Our labeling set $a, b, c, 1, \ldots, n$ is convenient because, as we explain in Subsection~\ref{sec:BoundaryStrata}, the $i$-th component of $\Omega_n$ describes coordinates obtained by looking at the curve near the marked point $p_i$, where $i$ ranges from $1$ to $n$ (i.e. $i$ is never $a, b$ or $c$). This corresponds to the fact that $\Omega_n$ is constructed using the \textit{psi classes} $\psi_1$ (on $\Mbar_{0, \{abc1\}}$), $\psi_2$ (on $\Mbar_{0,\{abc12\}}$), \ldots, $\psi_n$ (on $\Mbar_{0, S}$). These labels also make it easier to track the combinatorial arguments in Sections \ref{sec:set-theoretic} and \ref{sec:scheme-theoretic}.
\end{remark}

We write $$\pi_n:\Mbar_{0,S}\to \Mbar_{0,S\setminus n}$$ for the \textbf{forgetting map} obtained by forgetting the marked point labeled $n$ (and collapsing any resulting unstable components).

\subsection{Boundary strata and trees}\label{sec:BoundaryStrata}

 The \textbf{dual tree} of a point in $\Mbar_{0,S}$ is the leaf-labeled tree formed by drawing a vertex in the center of each $\PP^1$ circle and then connecting this vertex to each marked point on its circle and each vertex on an adjacent circle. The resulting graph is guaranteed to be a tree since the curve has genus $0$.
 
A tree is \textbf{trivalent} if every vertex has degree $1$ or $3$ and at least one vertex has degree $3$, and it is \textbf{at least trivalent} or \textbf{stable} if it has no vertices of degree $2$ and at least one vertex of degree $\ge 3$.  The dual tree of any stable genus $0$ curve is a stable tree.  

The \textbf{interior} of $\Mbar_{0,S}$ is the open set $M_{0,S}\subset \Mbar_{0,S}$ consisting of all the curves that have a single $\PP^1$ with all distinct marked points. The points of the interior correspond to those whose dual tree consists of a central node with $|S|$ leaves attached. 

The \textbf{boundary} of $\Mbar_{0,S}$ is the complement of the interior, consisting of the points corresponding to stable curves with more than one irreducible component.  If $T$ is a stable tree whose leaves are labeled by $S$, the corresponding \textbf{boundary stratum} $X_T$ is the closure of the set of all stable curves whose dual tree is $T$.

 For a stable tree $T$, let $v_i \in T$ be the internal vertex adjacent to leaf edge $i$.  We refer to the connected components of $T \setminus \{v_i\}$ (defined by vertex deletion) as the \defn{branches of $T$ at $v_i$}.  See Figure \ref{fig:branches} for an example of these concepts. 

\begin{figure}
    \centering
    
    \includegraphics{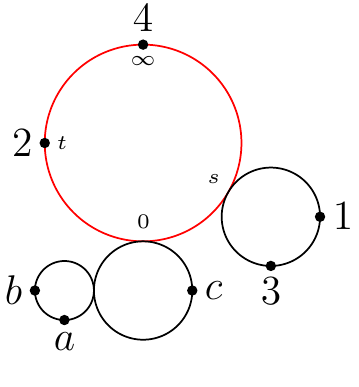}\hspace{2cm}\includegraphics{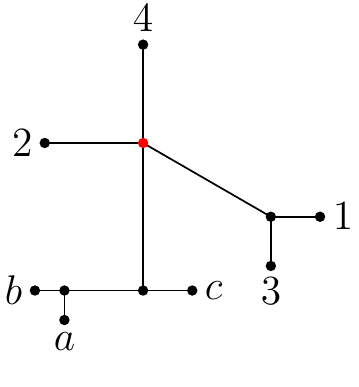}\hspace{2cm}\includegraphics{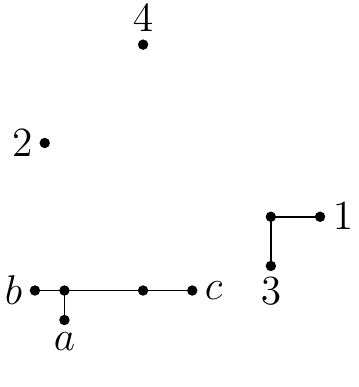}

    \caption{The stable curve at left has the dual tree shown at center, with its disconnected branches at $v_4$ shown at right. The labels $a,b,c$ are on the same branch at $v_4$, and $1,3$ are on the same branch.  The label $2$ is on its own branch, as is $4$.}
    \label{fig:branches}
\end{figure}

\subsection{The Kapranov map and the embedding \texorpdfstring{$\Omega_n$}{Omega-n}}\label{sec:Kapranov}

For all facts stated throughout this subsection, we refer the reader to Kapranov's paper \cite{Ka1}, in which the Kapranov morphism below was originally defined.  

The \textbf{$n$th cotangent line bundle} $\mathbb{L}_n$ on $\Mbar_{0,S}$ is the line bundle whose fiber over a curve $C\in \Mbar_{0,S}$ is the cotangent space of $C$ at the marked point $n$. The corresponding \textit{$\psi$ class} is $\psi_n = c_1(\mathbb{L}_n)$, the first Chern class of this line bundle. The corresponding map to projective space, which by abuse of notation we call \[\psi_n : \Mbar_{0,S} \to \mathbb{P}^{|S|-3},\]
is called the \textbf{Kapranov morphism}. We coordinatize the Kapranov map as follows.  Given a curve $C$ in the interior $M_{0,S}$, write $p_a,p_b,p_c,p_1,\dots,p_n$ for the coordinates of the $n+3$ marked points on the unique component of $C$, after choosing an isomorphism $C\cong \bP^1$. With these coordinates, the restriction of $\psi_n$ to the interior $M_{0,S}$ is given by
\begin{equation} \label{eq:kap-interior}
    \psi_n(C) = \bigg[
    \frac{p_a - p_b}{p_n - p_b} : 
    \frac{p_a - p_c}{p_n - p_c} : 
    \frac{p_a - p_1}{p_n - p_1} : 
    \frac{p_a - p_2}{p_n - p_2} :
    \cdots :
    \frac{p_a - p_{n-1}}{p_n - p_{n-1}}
    \bigg].
\end{equation}

The coordinates $p_i$ are only well-defined up to M\"obius transformations, and it is convenient to choose coordinates on $C$ in which $p_a = 0$ and $p_n = \infty$. The map then simplifies to
\begin{equation} \label{eq:kap-interior-2}
    \psi_n(C) = [p_b : p_c : p_1 : p_2 : \cdots : p_{n-1}].
\end{equation}

We now describe how to use the above formulas to compute $\psi_n$ on boundary strata, i.e. reducible stable curves $C$. Essentially, $\psi_n$ reduces to a smaller Kapranov morphism using the irreducible component of $C$ containing $p_n$ (followed by a linear map into $\mathbb{P}^n$).

\begin{prop}\label{prop:PsiCoordinates}
  Suppose $C\in \Mbar_{0,S}$ has dual tree $T$, and $\psi_n(C)=[t_b:t_c:t_1\cdots:t_{n-1}]$.  Then:
  \begin{enumerate}
      \item We have $t_j=t_k$ if and only if the marked points $\{j, k\}$ are on the same branch of $T$ at $v_n$.
      \item We have $t_j=0$ if and only if the marked points $\{j, a\}$ are on the same branch of $T$ at $v_n$.
     \item Let $C'$ be the $\PP^1$ component of $C$ containing the marked point $n$, and choose a representative marked point from each branch at $v_n$.  Let $a,i_1,\ldots,i_k,n$ be these representatives from least to greatest, and let $C'$ be the $\PP^1$ component containing $n$ in $C$.  Then, viewing $C'$ as an element of $M_{0, \{a,i_1, \ldots, i_k, n\}}$, where we label each special point of $C'$ by its representative $i_j$, we have $\psi_n(C') = [t_{i_1}:\ldots:t_{i_k}]$, where $\psi_n$ is the smaller Kapranov map.  (The coordinates $t_{i_j}$ may be computed by Equation \eqref{eq:kap-interior-2}.)
  \end{enumerate}
\end{prop}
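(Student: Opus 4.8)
The plan is to use the fact that $\psi_n$ is a morphism, hence continuous, so that its value at a boundary point $C$ is the limit of $\psi_n(C_\epsilon)$ along any one-parameter family of interior curves $C_\epsilon \in M_{0,S}$ degenerating to $C$. Since the interior values are given explicitly by Equation \eqref{eq:kap-interior}, all three claims will follow from a careful computation of the limits of the coordinate ratios $\frac{p_a - p_j}{p_n - p_j}$ along a well-chosen smoothing family. The key geometric input is that, as $C_\epsilon \to C$, all marked points lying on a single branch of $T$ at $v_n$ collapse to the same point of the component $C'$, namely the special point of $C'$ where that branch is attached.

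Concretely, I would coordinatize $C' \cong \bP^1$ and, for each branch $B$ of $T$ at $v_n$, let $q_B \in C'$ denote the special point (node or marked point) at which $B$ meets $C'$; since the branches are distinct connected components of $T\setminus\{v_n\}$ and $C'$ is stable, the points $q_B$ are pairwise distinct. For the branch $\{n\}$ we have $q_{\{n\}} = p_n$, and for the branch $B_a$ containing $a$ we write $q_{B_a}$ (equal to $p_a$ if $a$ lies directly on $C'$). I would then build an explicit smoothing $C_\epsilon$ in which, for each $j$ in a branch $B$, the marked point is placed at $p_j = q_B + \epsilon\, r_j$, where the constants $r_j$ record the relative positions of the marked points inside the branch; as $\epsilon \to 0$ this family converges to $C$ in $\Mbar_{0,S}$. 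Substituting into the $j$-th coordinate of \eqref{eq:kap-interior} and letting $\epsilon\to 0$ gives
\[
  t_j \;=\; \lim_{\epsilon\to 0}\frac{p_a - p_j}{p_n - p_j} \;=\; \frac{q_{B_a} - q_B}{p_n - q_B},
\]
using $p_a \to q_{B_a}$ and $p_j \to q_B$, which is valid for every coordinate index $j$ since $j \neq n$ means $B \neq \{n\}$ and hence $p_n - q_B \neq 0$.

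From this single formula all three statements follow. The value $t_j$ depends on $j$ only through its branch $B$, and $z \mapsto \frac{q_{B_a} - z}{p_n - z}$ is a M\"obius transformation, hence injective; since the $q_B$ are pairwise distinct, $t_j = t_k$ exactly when $j$ and $k$ share a branch, proving (1). The numerator vanishes precisely when $q_B = q_{B_a}$, i.e. when $B = B_a$, which gives (2). Finally, identifying the special points $q_{B_a}, q_{B_{i_1}}, \dots, q_{B_{i_k}}, p_n$ of $C'$ with the marked points $a, i_1, \dots, i_k, n$ of $C'$ viewed in $M_{0,\{a,i_1,\dots,i_k,n\}}$, the displayed formula for $t_{i_j}$ becomes exactly the $i_j$-th coordinate of the smaller Kapranov map \eqref{eq:kap-interior} applied to $C'$, which is (3).

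I expect the main technical obstacle to be making the smoothing rigorous: one must verify that the prescribed family $C_\epsilon$ really does limit to $C$ with its given dual tree $T$ in $\Mbar_{0,S}$, and that the resulting limit in $\PP^n$ is well-defined as a projective point and independent of the chosen family. Independence follows from $\psi_n$ being a morphism, but one still needs to check that the limiting coordinate vector is not identically zero, so that the coordinatewise limits compute the genuine projective limit. This holds because stability forces $v_n$ to have degree at least $3$, so besides $\{n\}$ and $B_a$ there is at least one further branch $B$, whose representative marked point lies in $\{b,c,1,\dots,n-1\}$ and yields a nonzero coordinate $\frac{q_{B_a}-q_B}{p_n - q_B}\neq 0$.
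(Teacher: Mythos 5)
The paper itself gives no proof of this proposition: it is stated as background in Subsection~\ref{sec:Kapranov}, with all facts in that subsection deferred to Kapranov's paper \cite{Ka1}. So there is no argument in the paper to compare yours against; I can only assess your proof on its own terms. Your overall strategy -- continuity of the morphism $\psi_n$, an explicit degenerating family of interior curves, and the entrywise limit of the formula \eqref{eq:kap-interior} -- is the standard and correct way to establish this, and your deductions of (1), (2), (3) from the limit formula $t_j = \frac{q_{B_a}-q_B}{p_n-q_B}$ are all right, including the two checks that actually need making: that $z \mapsto \frac{q_{B_a}-z}{p_n-z}$ is an injective M\"obius map (its determinant is $q_{B_a}-p_n \ne 0$), and that the limiting coordinate vector is nonzero (via stability of $C'$), so the entrywise limit really computes the projective limit.

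The one genuine gap is in the construction of the family. The recipe $p_j = q_B + \epsilon\, r_j$ with \emph{constant} $r_j$ only produces a family converging to $C$ when every branch $B$ at $v_n$ is irreducible (a single marked point or a single $\PP^1$ bubble). If a branch is itself a tree of several components, there is no single set of ``relative positions'' $r_j$, and a one-scale smoothing converges instead to the curve $\widetilde{C}$ in which that whole branch has been flattened to one component -- a different point of $\Mbar_{0,S}$ with a coarser dual tree. You flag this yourself as the main thing to verify, but as written the verification would fail. The fix is easy and worth stating: either use nested scales $\epsilon, \epsilon^2, \dots$ within each branch so that the limit recovers $T$, or argue in two stages -- your computation proves the proposition for all curves whose branches at $v_n$ are irreducible, and a general $C$ is a limit of such curves $\widetilde{C}_s$ sharing the same component $C'$, the same attaching points $q_B$, and the same branch partition; since $\psi_n(\widetilde{C}_s)$ is then constant and equal to the claimed value, continuity of $\psi_n$ gives $\psi_n(C)$ equal to it as well. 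With either patch the argument is complete.
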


\begin{example}
Let $C$ be the curve in Figure \ref{fig:branches}, and let $C'$ be the component containing marked point $4$.  Parameterize $C'\cong \PP^1$ such that branch $\{4\}$ is at $\infty$, branch $\{a,b,c\}$ is at $0$, and $\{2\}$ and $\{1,3\}$ are at $t$ and $s$, respectively. Picking representatives $i_1=1$ and $i_2=2$, we have $\psi_4(C') = [s:t]$, and so $\psi_4(C)=[0:0:s:t:s].$
\end{example}

To define $\Omega_n$, we can combine the $\psi$ and forgetting maps as follows to form a map:
$$\psi_n\times \pi_n: \Mbar_{0,S}\injto \PP^n\times \Mbar_{0,S\setminus n}$$
This map is known to be an embedding \cite{Ka1}, and we may iterate the construction on $\Mbar_{0,S\setminus n}$ and so on, obtaining the \textbf{iterated Kapranov embedding}
\[\emb_n: \Mbar_{0,S} \hookrightarrow \mathbb{P}^1 \times \mathbb{P}^2 \times \cdots \times \mathbb{P}^n.\]
Keel and Tevelev \cite{KeT} first examined this full embedding as part of studying the log canonical embedding of $\Mbar_{0,n}$, which is the composition of $\Omega_n$ with the Segre embedding $\mathbb{P}^1 \times \cdots \times \mathbb{P}^n \hookrightarrow \mathbb{P}^{(n+1)!-1}$.
The $i$-th factor of $\Omega_n$ is given by forgetting the points $p_{i+1}, \ldots, p_n$, then applying the Kapranov morphism $\psi_i$ on the smaller moduli space.  

\begin{example}
  If $C$ is the curve in Figure \ref{fig:branches}, we have $$\Omega_4(C)=([0:1],[0:0:1],[0:0:1:0],[0:0:s:t:s]).$$
\end{example}

\subsection{The Monin--Rana equations}\label{sec:MR}

For the purpose of notational consistency with formulas like \eqref{eq:kap-interior-2}, we write $$\left[x_b^{(i)}:x_{c}^{(i)}:x_1^{(i)}:x_2^{(i)}:\cdots:x_{i-1}^{(i)}\right]$$ for the coordinates of the factor $\PP^i$ in the product $\PP^1\times \PP^2 \times \cdots \times \PP^n$.  Now, for each pair of indices $i<j$ in $\{1,2,\ldots,n\}$, consider the $2\times (i+1)$ matrix 
\begin{equation}\label{eq:MR}
    \mathrm{Mat}_{i,j}:=\begin{pmatrix}
x_b^{(i)}(x_b^{(j)}-x_{i}^{(j)}) & x_c^{(i)}(x_c^{(j)}-x_{i}^{(j)}) & x_1^{(i)}(x_1^{(j)}-x_{i}^{(j)}) & \cdots & x_{i-1}^{(i)}(x_{i-1}^{(j)}-x_{i}^{(j)}) \\
x_b^{(j)} & x_c^{(j)} & x_1^{(j)} & \cdots & x_{i-1}^{(j)}
\end{pmatrix}.
\end{equation}

\begin{definition}
  The ideal $I_n$ is the homogeneous ideal generated by all $2\times 2$ minors of the matrices \eqref{eq:MR}.  We denote the subscheme cut out by $I_n$ in $\PP^1\times \PP^2 \times \cdots \times \PP^n$ by $\MR_n$, and we refer to the equations defined by setting the $2\times 2$ minors to $0$ as the \textbf{Monin--Rana equations}.
\end{definition}

\begin{example}
  In the case $n=2$, there is only one pair of indices $i<j$, namely $i=1$ and $j=2$.  In this case, $\Mbar_{0,S}\cong \Mbar_{0,5}$ and the unique $2\times 2$ matrix yields the equation in Example \ref{ex:M05}.
\end{example}

\begin{example}
  For $n=3$, as a shorthand we rename the coordinates $$[x_b:x_c], [y_b:y_c:y_1],[z_b:z_c:z_1:z_2]$$ in this case.  Then we have three matrices of the form \eqref{eq:MR}, $\mathrm{Mat}_{1,2}$, $\mathrm{Mat}_{1,3}$, and $\mathrm{Mat}_{2,3}$:
  \[
  \begin{pmatrix}
   x_b(y_b-y_1) & x_c(y_c-y_1) \\
   y_b & y_c
  \end{pmatrix},
  \begin{pmatrix}
   x_b(z_b-z_1) & x_c(z_c-z_1) \\
   z_b & z_c
  \end{pmatrix},
  \begin{pmatrix}
     y_b(z_b-z_2) & y_c(z_c-z_2) & y_1(z_1-z_2) \\
   z_b & z_c & z_1
  \end{pmatrix}
  \]
  The ideal $I_3$ defining $\Mbar_{0,S}\cong \Mbar_{0,6}$ is then generated by the five resulting $2\times 2$ minors. Note that the codimension of $\Mbar_{0, 6}$ is only $3$.
\end{example}

\section{Set-theoretic equality}\label{sec:set-theoretic}

We now prove that $\MR_n= \Omega_n(\Mbar_{0,S})$ as sets.

\subsection{The interior case}

We first consider the coordinates corresponding to points in the interior of the moduli space, that is, the irreducible stable curves.

\begin{prop}\label{prop:interior}
  If a point $\vec{x}$ in $\PP^1\times \PP^2 \times \cdots\times \PP^n$ satisfies the Monin--Rana equations and its $\PP^n$ coordinates $x^{(n)}_b,\ldots,x^{(n)}_{n-1}$ are all nonzero and distinct, then $\vec{x}$ is in the image of $\Omega_n$.  Moreover, in this case $\vec{x}=\Omega_n(C)$ for a curve $C$ in the interior $M_{0,S}$ of $\Mbar_{0,S}$.
\end{prop}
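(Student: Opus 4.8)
The plan is to reconstruct the curve $C$ explicitly from the coordinates $\vec{x}$ and then verify that $\Omega_n(C) = \vec{x}$. Since we are told the $\PP^n$ coordinates $x_b^{(n)}, \ldots, x_{n-1}^{(n)}$ are all nonzero and distinct, Proposition \ref{prop:PsiCoordinates} suggests that $\vec{x}$ should come from a curve with all marked points on distinct branches at $v_n$, i.e.\ an irreducible curve in the interior $M_{0,S}$. Guided by Equation \eqref{eq:kap-interior-2}, I would define a candidate curve $C \cong \PP^1$ by placing $p_a = 0$, $p_n = \infty$, and then reading off the remaining marked points as
\begin{equation*}
  p_b = x_b^{(n)}, \quad p_c = x_c^{(n)}, \quad p_1 = x_1^{(n)}, \quad \ldots, \quad p_{n-1} = x_{n-1}^{(n)}.
\end{equation*}
The hypothesis that these coordinates are nonzero and distinct guarantees that $p_a, p_b, p_c, p_1, \ldots, p_{n-1}, p_n$ are $n+3$ distinct points on $\PP^1$, so that $C$ is a genuine point of the interior $M_{0,S}$, and by construction $\psi_n(C) = \vec{x}^{(n)}$ matches the $\PP^n$ factor.

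The main work is then to show that the remaining factors $\vec{x}^{(i)}$ for $i < n$ agree with the corresponding Kapranov coordinates of $C$. Here I would use the Monin--Rana equations coming from the matrices $\mathrm{Mat}_{i,n}$ in \eqref{eq:MR}. Taking the $2\times 2$ minor of $\mathrm{Mat}_{i,n}$ involving columns indexed by two marked points $p, q \in \{b, c, 1, \ldots, i-1\}$ gives the relation
\begin{equation*}
  x_p^{(i)}\,(x_p^{(n)} - x_i^{(n)})\, x_q^{(n)} \;=\; x_q^{(i)}\,(x_q^{(n)} - x_i^{(n)})\, x_p^{(n)}.
\end{equation*}
Substituting the values $x_p^{(n)} = p_p$ etc.\ and rearranging, this says that the ratios $x_p^{(i)} / (\tfrac{p_p}{p_p - p_i})$ are all equal as $p$ ranges over $\{b, c, 1, \ldots, i-1\}$. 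Since $\tfrac{p_p}{p_p - p_i} = \tfrac{p_a - p_p}{p_i - p_p}$ in our normalization $p_a = 0$, these are precisely the Kapranov coordinates of the curve $C_i := \pi_{i+1} \circ \cdots \circ \pi_n(C)$ obtained by forgetting the last $n-i$ points, as computed by \eqref{eq:kap-interior}. Thus the minors force $\vec{x}^{(i)}$ to be a scalar multiple of $\psi_i(C_i)$, which is exactly the statement that $\vec{x}^{(i)} = \Omega_n(C)^{(i)}$ in $\PP^i$.

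The step I expect to require the most care is the degenerate one: to apply the minor relation to solve for $x_p^{(i)}$ I need the entries $x_p^{(n)}$ and the differences $x_p^{(n)} - x_i^{(n)}$ to be nonzero, which is exactly what the ``nonzero and distinct'' hypothesis supplies for the $\PP^n$ factor, but I also need to ensure the intermediate factors $\vec{x}^{(i)}$ are not identically zero and that the proportionality propagates without dividing by zero. I would handle this by arguing that once $\vec{x}^{(n)}$ has distinct nonzero coordinates, each minor relation determines the ratios in $\vec{x}^{(i)}$ uniquely up to the common projective scalar, so no coordinate of $\vec{x}^{(i)}$ can vanish inconsistently; any potential vanishing is pinned down by the Kapranov formula itself. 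Combining the matched $\PP^n$ factor with the inductively matched lower factors then yields $\Omega_n(C) = \vec{x}$ with $C \in M_{0,S}$, completing the proof.
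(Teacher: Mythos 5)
Your proof is correct, and it takes a genuinely different route from the paper's. The paper works only with the consecutive matrices $\mathrm{Mat}_{i-1,i}$: it first shows by downward induction that every intermediate factor $\vec{x}^{(i)}$ has nonzero, distinct coordinates (the top row of $\mathrm{Mat}_{i-1,i}$ must be a nonzero scalar multiple $\lambda$ of the bottom row, and $y\mapsto \lambda y/(y-y_{i-1})$ is injective), and then builds $C$ one marked point at a time, inserting $p_n$ at the coordinate $\lambda$ on an inductively constructed curve $C'$. You instead read the entire curve off the $\PP^n$ factor in one step and use the non-consecutive matrices $\mathrm{Mat}_{i,n}$, whose bottom row consists of the $\PP^n$ coordinates and is therefore entirely nonzero by hypothesis; the rank-one condition then forces $x_p^{(i)}=\lambda_i\,p_p/(p_p-p_i)$ for a single scalar $\lambda_i$, which cannot be zero since a projective point cannot have all coordinates vanish, and with $p_a=0$ these are exactly the Kapranov coordinates of $\pi_{i+1}\circ\cdots\circ\pi_n(C)$ by \eqref{eq:kap-interior}. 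The worry you flag in your final paragraph does not in fact materialize: the only divisions are by $x_p^{(n)}$ and $x_p^{(n)}-x_i^{(n)}$, both nonzero by hypothesis, so no induction on the intermediate factors is needed. What the paper's longer route buys is the explicit by-product that every factor $\vec{x}^{(i)}$ also has nonzero distinct coordinates, and an inductive template matching the structure used elsewhere in the paper; your route buys brevity and directness by exploiting that the Monin--Rana system includes all pairs $(i,n)$, not just consecutive ones.
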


\begin{proof}
We first claim that for each $i=1,\ldots,n$, the coordinates $x^{(i)}_b,\ldots,x^{(i)}_{i-1}$ in the $\PP^i$ factor are all nonzero and distinct.  To show this, we show that if the coordinates in $\PP^i$ are distinct and nonzero then those in $\PP^{i-1}$ are.  Then the claim will follow by induction, since we assumed the $\PP^n$ coordinates are nonzero and distinct.

As a shorthand, let $$[y_b:y_c:y_1:\cdots:y_{i-1}]:=[x_b^{(i)}:x_c^{(i)}:x_1^{(i)}:\cdots:x_{i-1}^{(i)}]$$ be the $\PP^i$ coordinates of $\vec{x}$, and let $$[x_b:x_c:x_1:\cdots:x_{i-2}]:=[x_b^{(i-1)}:x_c^{(i-1)}:x_1^{(i-1)}:\cdots:x_{i-2}^{(i-1)}]$$ be the $\PP^{i-1}$, with the scaling chosen in each so that the leftmost nonzero entries of each are $1$.
Then the matrix $\mathrm{Mat}_{i-1,i}$ from Equation \eqref{eq:MR}: \begin{equation}\label{eq:matrix}
    \mathrm{Mat}_{i-1,i}=\begin{pmatrix}
x_b(y_b-y_{i-1}) & x_c(y_c-y_{i-1}) & x_1(y_1-y_{i-1}) & \cdots & x_{i-2}(y_{i-2}-y_{i-1}) \\
y_b & y_c & y_1 & \cdots & y_{i-2}
\end{pmatrix}.
\end{equation}

Since $\vec{x}$ satisfies the Monin--Rana equations, this matrix has rank $1$.  Therefore, since the bottom row is all nonzero and distinct by assumption, the top row is a scalar multiple $\lambda$ of it.  We first show that $\lambda\neq 0$.  If $\lambda=0$ then since each $y_j-y_{i-1}$ is nonzero (since the $y$'s are distinct), we have that each $x_j=0$ for $j=b,c,1,\ldots,i-1$, but then this means that all of the $x$ coordinates are $0$, which is impossible in projective coordinates.  Thus, $\lambda\neq 0$.

Now, we can solve for each $x_j$ using the equation $$x_j(y_j-y_{i-1})=\lambda y_j$$
which yields $$x_j=\frac{\lambda y_j}{y_j-y_{i-1}}.$$  We claim that the function $f(y)=\frac{\lambda y}{y-y_{i-1}}$ is injective; indeed, if $f(a)=f(b)$ then $\lambda a (b-{y_{i-1}})=\lambda b (a-y_{i-1})$, and simplifying yields $a=b$ (since both $\lambda$ and $y_{i-1}$ are nonzero).  Since all $y_j$'s are distinct, we conclude that the outputs $x_j=f(y_j)$ are all distinct (and nonzero) as well.

Now that we have proven the claim, we construct the point $C$ in the interior inductively.  Assume that there is a curve $C'$ in $M_{0,S\setminus n}$ for which $\Omega_{n-1}(C')$ equals the first $n-1$ tuples of projective coordinates of $\vec{x}$.  In particular, let $i=n$ in the above calculation, so that $x_b,\ldots,x_{n-2}$ are the $\PP^{n-1}$ coordinates and $y_b,\ldots,y_{n-1}$ are the $\PP^n$ coordinates.   Then $x_b$ is nonzero, so we can choose a scaling so that $x_b=1$.   Coordinatize $C'$ accordingly so that marked point $a$ is at $0$, marked point $n-1$ is at $\infty$, and marked point $b$ is at $1$.

With respect to this coordinatization, define $C$ to be formed from $C'$ by inserting $n$ at coordinate $\lambda$, where $\lambda$ is the ratio of the top row of the matrix to the bottom (which is now fixed because we scaled so that $x_b=1$).  Then the $\PP^n$ coordinates of $C$ under the Kapranov map $\psi_n$ are precisely the coordinates of $b,c,1,\ldots,n-1$ obtained by changing coordinates so that $n$ is now at $\infty$ rather than $n-1$.  But the M\"obius transformation that changes the coordinates back is precisely $f(y)=\frac{\lambda y}{y-y_{i-1}}$, because we have $f(\infty)=\lambda$ and $f(0)=0$ and $f(y_{i-1})=\infty$.  Thus, $y_b,\ldots,y_{n-1}$ are indeed the final coordinates of $C$ under the embedding.
\end{proof}

\subsection{General setup}

We now give a sketch of our proof of the set-theoretic equality, which we carry out in the remainder of this section by induction on $n$.  Let $\vec{x}\in \PP^1\times \PP^2\times \cdots \times \PP^n$ satisfy the Monin--Rana equations, and rename the $\PP^n$ coordinates of $\vec{x}$ to be $[y_b:y_c:y_1:\cdots:y_{n-1}]$.   Let $\vec{x}'$ consist of the first $n-1$ coordinate vectors (i.e., all but the $y$ coordinates).  Since $\vec{x}'$ still satisfies the Monin--Rana equations for $n-1$, inductively we assume there is a curve $C'$, with dual tree $T'$, corresponding to $\vec{x}'$ under $\Omega_{n-1}$.  

Now, we consider the values of $[y_b:y_c:y_1:\cdots:y_{n-1}]$ and color the marked points of $C'$ as follows: we color marked point $i$ by color `$Z$' if $y_i=0$ (or if $i=a$), and we choose another color for each distinct nonzero coordinate value, so that marked points $j$ and $k$ have the same color if and only if $y_j=y_k$.  We will show that the Monin--Rana equations force each color to be separated from all other colors by an edge, and that this will determine uniquely where the marked point $n$ must be inserted in $C'$ in order to create a curve $C$ that maps to $\vec{x}$.

We therefore begin with some combinatorial definitions and lemmata about coloring trees.

\subsection{Strong separation}
For any tree $T$ corresponding to a curve $C\in \Mbar_{0,S}$, and any $i$, we write $$T|_i:=\pi_{i+1}\circ \cdots \circ \pi_{n}(T)$$ to denote the tree formed by forgetting all marked points greater than $i$.  That is, $T|_i$ is the dual tree of $\pi_{i+1}\circ \cdots \circ \pi_{n}(C)$.

\begin{definition}
On a leaf-labeled tree, we say leaf $i$ \textbf{separates} leaves $j$ and $k$ if the internal vertex $v_i$ adjacent to the leaf edge $i$ lies on the unique path from $j$ to $k$ in the tree.  In other words, $j$ and $k$ are on different branches at $v_i$.
\end{definition}

\begin{definition}\label{def:insertions}
Let $T$ be an at-least-trivalent tree on vertices labeled $a,b,c,1,\ldots,n$ with a coloring of every leaf either $R$ or $G$ (for `red' or `green') such that $a$ is colored $R$.  

We say $T$ has an \textbf{$R$-type bad configuration} if there exist indices $i,j,k\in \{b,c,1,\ldots,n\}$ with $j,k<i$ such that $j,k$ are colored $G$, $i$ is colored $R$, and in the tree $T|_i$, the leaf $i$ separates $j$ and $k$.  We also say $T$ has a \textbf{$G$-type bad configuration} if there exist $i,j,k\in \{b,c,1,\ldots,n\}$ with $j,k<i$ such that $i,k$ are colored $G$, $j$ is colored $R$, and in $T|_i$, the leaf $i$ separates $a$ and $j$.   (See Figure \ref{fig:bad-insertions} for an illustration of each type of bad configuration.)

Finally, we say $T$ has the \textbf{strong separation property} if it does not have any $R$-type or $G$-type bad configurations.  We also say $R$ and $G$ are \textbf{strongly separated} from one another on $T$.
\end{definition}

\begin{figure}
    \centering
    \includegraphics{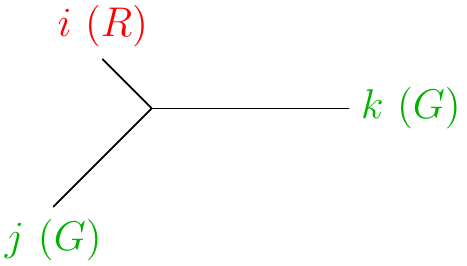} \hspace{1.5cm} \includegraphics{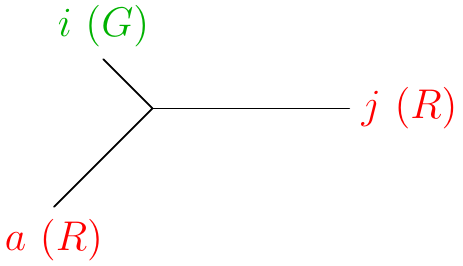}
    \caption{At left, an $R$-type bad configuration, which also has an $R$-colored vertex $a$ (not shown) which may branch out anywhere along the path from $j$ to $k$. At right, a $G$-type bad configuration, which similarly has a $G$-colored vertex $k$.}
    \label{fig:bad-insertions}
\end{figure}

\begin{remark}
Since the marked point $a$ will be colored $Z$ in the fully colored tree, the two scenarios in which we will be applying the strong separation property are:
\begin{enumerate}
    \item When $R=Z$ and the color $G$ represents the union of all nonzero colors,
    \item When $G$ is a nonzero color and $R$ represents the union of all colors besides $G$.
\end{enumerate}
\end{remark}

We now prove the key lemma about strong separation.

\begin{lemma}\label{lem:strongsep}
Suppose $T$ is an at-least-trivalent tree satisfying the strong separation property, with at least one $R$ and at least one $G$.  Then there is a unique edge $e$ that separates all the $R$ leaves from the $G$ leaves.
\end{lemma}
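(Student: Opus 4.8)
The plan is to prove existence and uniqueness separately, both by an argument that reduces the statement to a purely tree-theoretic consequence of the two bad-configuration constraints. First I would set up the basic topological fact about trees that makes ``separating edge'' a sensible notion: for any edge $e$ of $T$, deleting $e$ disconnects $T$ into exactly two components, and $e$ separates all $R$ leaves from all $G$ leaves precisely when one component contains every $R$ leaf and the other contains every $G$ leaf. Since there is at least one $R$ and at least one $G$, such an edge, if it exists, is automatically unique: two distinct edges giving such a bipartition would force a leaf to lie on the wrong side of one of them (the two induced bipartitions of the leaf set would have to coincide, and an edge is determined by the bipartition it induces on a tree). So the real content is \emph{existence}, and I would spend the bulk of the argument there.

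For existence, I would argue by contradiction. Suppose no edge separates the $R$ leaves from the $G$ leaves. Consider the minimal subtree $T_R$ spanning all $R$ leaves and the minimal subtree $T_G$ spanning all $G$ leaves. If these two subtrees were vertex-disjoint, the unique path between them would contain an edge whose removal isolates $T_R$ from $T_G$, giving a separating edge --- a contradiction. Hence $T_R$ and $T_G$ must share at least one vertex; let $w$ be such a shared internal vertex. The idea is that $w$ witnesses a bad configuration. Because $w$ lies in both spanning subtrees, there are $R$ leaves on at least ``two sides'' of $w$ relative to the $G$ leaves, or vice versa; more precisely, $w$ has at least two branches each meeting a $G$ leaf, or two branches each meeting an $R$ leaf, while also having a branch of the opposite color. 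I would then translate the offending vertex $w$ into a separating leaf by locating the appropriate marked point $i$ whose internal vertex is $w$ (or lies between the conflicting branches) and checking that the indices $i,j,k$ fall into one of the two forbidden patterns of Definition \ref{def:insertions}.

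The main obstacle I anticipate is that Definition \ref{def:insertions} is not symmetric in $R$ and $G$, and it is phrased in terms of the \emph{forgotten} tree $T|_i$ and the \emph{index inequalities} $j,k<i$, rather than directly in terms of $T$. So the crux is: given a vertex $w$ where the two colors genuinely interleave, I must produce a specific leaf index $i$, of the correct color and with the correct comparison to the other two indices, such that $i$ separates the required pair \emph{after forgetting the larger marked points}. This requires care: forgetting points can collapse the vertex $w$, so I would need to choose $i$ to be the largest-indexed leaf adjacent to (or closest to) $w$ in the relevant branches, so that $w$ survives in $T|_i$ and the separation persists. The two cases --- whether the conflict is an $R$-leaf flanked by $G$-leaves, or a $G$-leaf realized through the $a$-leaf and an $R$-leaf --- should map exactly onto the $R$-type and $G$-type bad configurations respectively, and verifying that one of these two cases must occur (using that $a$ is colored $R$, hence lies on the $R$ side) is where the hypothesis that $T$ has the strong separation property is finally contradicted.
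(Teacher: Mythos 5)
Your uniqueness argument is fine (the fact that distinct edges of a tree induce distinct leaf bipartitions does require the no-degree-$2$ hypothesis, which you have), and your reduction of existence to the statement ``if the spanning subtrees $T_R$ and $T_G$ share an internal vertex $w$, then there is a bad configuration'' is a correct and reasonable framing. But the step you yourself flag as the crux is exactly where the proof is missing, and it does not go through in the naive way you suggest. From the interleaving vertex $w$ you must produce a triple $(i,j,k)$ satisfying \emph{all} of: the index ordering $j,k<i$, the prescribed colors, and separation in the \emph{forgotten} tree $T|_i$. Your proposed choice of $i$ (``the largest-indexed leaf adjacent to, or closest to, $w$'') does not guarantee the existence of the companion indices: for example, the two branches at $w$ meeting the color opposite to $i$ may contain only leaves with labels larger than $i$, in which case no admissible $j,k$ exist for that $i$; conversely, choosing $i$ large enough to dominate the needed $j,k$ may move $v_i$ away from $w$ in $T|_i$, or collapse branches of $w$ entirely. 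The $G$-type case is worse still, since it routes through the leaf $a$ (which need not be near $w$) and requires an auxiliary $G$-leaf $k<i$ that can live anywhere in the tree. One can check in small examples (e.g.\ four branches at a single vertex with colors $R,G,R,G$ in various label orders) that whether the witnessing configuration is $R$-type or $G$-type, and which leaf plays the role of $i$, depends delicately on the interaction of the labels with the tree; your sketch gives no procedure that handles this.

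For contrast, the paper avoids this difficulty entirely by inducting on $n$: it obtains the separating edge $e_0$ for $T|_{n-1}$ from the inductive hypothesis and then analyzes where the single new leaf $n$ can be inserted. In that argument the only bad configurations ever invoked have $i=n$, so the conditions $j,k<i$ are automatic and $T|_i=T$, making the asymmetric, order-dependent Definition~\ref{def:insertions} painless to apply. If you want to keep your direct approach, you would likely need to build a comparable induction or a careful extremal choice of $i$ (e.g.\ over minima of branches at $w$) together with a full case analysis on colors and label orders; as written, the existence half of your argument is a plan rather than a proof.
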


\begin{proof}
  We assume the leaves of $T$ are labeled $a,b,c,1,2,\ldots,n$, and proceed by induction on $n$, starting with $n=0$.  
  
  For the base case, consider the tree having only $a,b,c$ as leaves attached at a single central vertex.  If all of $a,b,c$ are colored $R$ there is nothing to show, since we assume there is at least one $G$.  Otherwise, if only one of $b$ or $c$ (say $b$) is colored $G$ then the unique edge is $b$'s leaf edge.  Finally, if $b$ and $c$ are both colored $G$ then the unique edge is $a$'s leaf edge.  
  
  Now, for induction, suppose the claim holds for $n-1$, and let $T' = T|_{n-1}$. If all of the leaves of $T'$ are colored $R$, then all but $n$ in $T$ are colored $R$ and so the leaf edge $n$ is the unique edge separating $R$ from $G$.  Otherwise, $T'$ has at least one $G$ and at least one $R$, so by the inductive hypothesis we can let $e_0$ be the edge in $T'$ separating the $G$s from the $R$s.  We think of $T$ as being formed from $T'$ by inserting a new leaf edge with leaf label $n$, where the new edge attaches  to an existing vertex or edge of $T'$.
  
  \textbf{Case 1.} Suppose $n$ is colored $R$.  Since there is no $R$-type bad configuration, $n$ cannot be inserted into $T'$ along a path connecting two $G$s to form $T$.  Thus, if there are at least two $G$s in the tree so far, $n$ cannot be inserted anywhere on the $G$ side of edge $e_0$.  Therefore, $n$ is inserted either on the $R$ side or directly on edge $e_0$ itself, and so either $e_0$ or its new half-edge on the $G$ side still separates all $G$s from $R$s in $T$.
  
  If instead there is only one $G$ in $T'$, then $e_0$ is a leaf edge with that $G$ being its leaf, and inserting the $n$ anywhere similarly results in a leaf edge that separates the unique $G$ from the $R$s.
  
  \textbf{Case 2.} Suppose $n$ is colored $G$.  If all of the leaves of $T'$ are colored $R$, then wherever $n$ is inserted, the edge separating $R$ from $G$ in $T$ is the leaf edge $n$.
  
  Otherwise, suppose $T'$ has a leaf colored $G$.   Since there is no $G$-type bad configuration, $n$ cannot be inserted along a path from $a$ to any other $R$ in $T'$.  If $n$ were inserted on the $R$ side of $e_0$, then it must be at a (possibly new) vertex $v$ along a path between two $R$s, say $j_1$ and $j_2$.  Deleting vertex $v$ from $T$ separates $j_1$ and $j_2$ onto different connected components, so at least one of these components, say that of $j_1$, doesn't contain $a$.  Then $v$ is also on the path from $a$ to $j_1$, so $j_1,a,n$ gives a bad $G$-type configuration, a contradiction.

  Therefore, $n$ is either inserted on the $G$ side of $e_0$ or on $e_0$ itself, and the claim follows.
\end{proof}

We can use the lemma above to conclude a more general result about multi-colored trees in which every color is strongly separated from the union of the others, as follows (and as illustrated in Figure \ref{fig:shrink}). 
\begin{corollary}\label{cor:shrink-colors}
  Suppose $T$ is an at-least-trivalent tree with each leaf assigned one of several possible colors.  Suppose that each color is strongly separated from the union of the remaining colors.  Then if we replace each monochromatic branch of $T$ with a single leaf of that color, we obtain an at-least-trivalent tree $T'$ in which each leaf is a different color.
\end{corollary}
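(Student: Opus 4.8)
The plan is to apply Lemma \ref{lem:strongsep} one color at a time to cut $T$ into monochromatic pieces, and then to check that collapsing each piece is a purely \emph{local} operation that preserves the degrees of the surviving internal vertices.

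First I would make the branches precise. For each color $c$ occurring on $T$, consider the $2$-coloring in which $G = c$ and $R$ is the union of all the other colors; if $c$ happens to be the color of the leaf $a$, I instead take $R = c$ and let $G$ be the union of the others, so that the hypothesis $a \in R$ of Lemma \ref{lem:strongsep} holds. By assumption each such $2$-coloring is strongly separated, so Lemma \ref{lem:strongsep} produces a unique edge $e_c$ separating the $c$-colored leaves from all the rest. Let $B_c$ be the component of $T \setminus e_c$ (edge deletion) containing the $c$-colored leaves; every leaf of $B_c$ has color $c$, and this $B_c$ is the monochromatic branch to be contracted.

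Next I would establish the global structure. The key tree fact is that for any two edges of a tree the two chosen sides are either nested, disjoint, or complementary. Since $B_c$ contains a leaf of color $c$ while $B_{c'}$ contains only leaves of a color $c' \ne c$, neither branch can be nested in the other; and a third color, when present, lies in neither branch, ruling out the complementary case, so the branches $B_c$ are pairwise disjoint. Writing $T_0 := T \setminus \bigcup_c B_c$ for the remaining core, I would check that when at least three colors occur each separating edge $e_c$ has its non-$B_c$ endpoint in $T_0$: otherwise that endpoint lies in some $B_{c'}$, and since the only edge leaving $B_{c'}$ is $e_{c'}$, we would get $e_c = e_{c'}$, forcing only two colors. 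Hence the contraction replacing each $B_c$ by a single leaf $\ell_c$ attached along $e_c$ to a core vertex is well-defined, and (contracting connected subtrees of a tree) $T'$ is again a tree whose leaves are exactly the $\ell_c$, one per color and so all of distinct color, which gives the coloring claim.

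The main point to verify, and the step I expect to carry the real content, is at-least-trivalence of $T'$. Here I would argue that the contraction is local at each core vertex $v \in T_0$: every edge at $v$ either joins $v$ to another core vertex, and survives, or crosses into a branch, hence is a separating edge $e_c$ and becomes the leaf edge of $\ell_c$, so $\deg_{T'}(v) = \deg_T(v) \ge 3$, using that core vertices are internal vertices of the stable tree $T$. Thus $T'$ has no vertex of degree $2$, and since at least three distinct separating edges force $T_0 \ne \varnothing$ it has a vertex of degree $\ge 3$. The only genuinely exceptional case is when exactly two colors occur, where $e_c$ is a single edge and $T'$ degenerates to two leaves joined by an edge; I would flag that this case is excluded or treated separately in the intended applications, where $a$ always carries the color $Z$ alongside further colors. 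The crux is therefore not the degree computation itself but the structural claim that the monochromatic branches are disjoint and meet only along the core, which is exactly what strong separation, through Lemma \ref{lem:strongsep}, provides.
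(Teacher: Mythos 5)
Your proposal is correct and follows the same route as the paper: invoke Lemma \ref{lem:strongsep} once per color to get a unique separating edge, observe each color occupies a single branch, and contract; you simply spell out the disjointness of the branches and the local degree count that the paper leaves implicit. Your flag about the two-color degeneration is a fair observation, but it is harmless in the paper's only use of the corollary (the non-binary case of Lemma \ref{lem:monochromatic}, where at least three colors occur).
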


\begin{proof}
  By the strong separation property on each color, each color is separated from the rest of the tree in $T$ by a unique edge by Lemma \ref{lem:strongsep}, and therefore is on a single branch of $T$ starting from that edge.  Replacing each branch by the leaf of its color, we therefore obtain an at-least-trivalent tree $T'$ with no repeated colors.
\end{proof}

\begin{figure}
    \centering
    \includegraphics{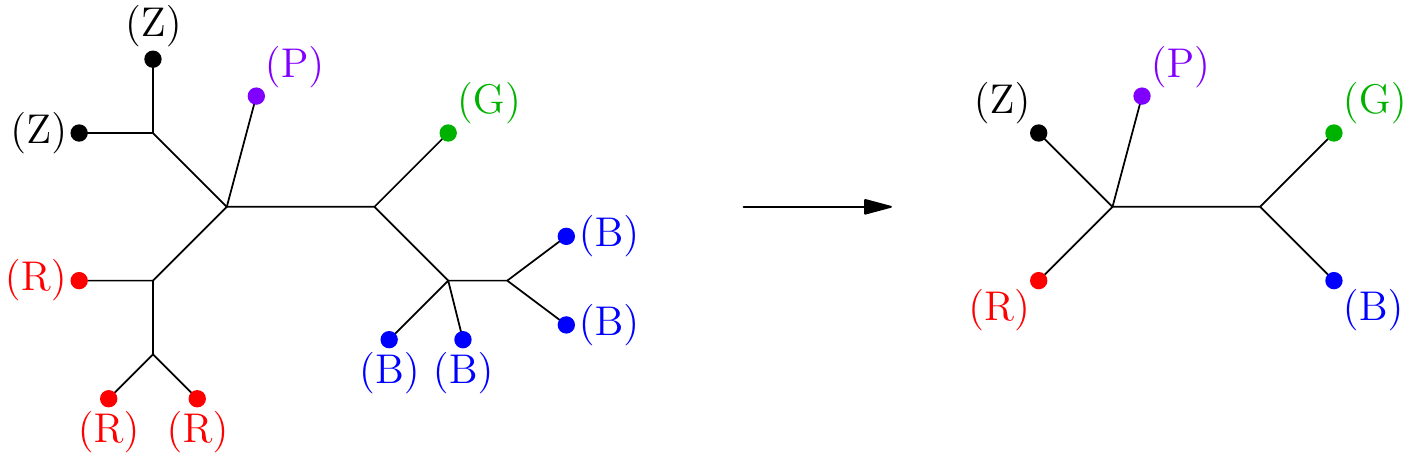}
    \caption{An illustration of Corollary \ref{cor:shrink-colors}.  At left, a tree $T$ in which each vertex is colored either red (R), green (G), blue (B), purple (P), or black (Z), and such that every color is separated from the remaining colors by a unique edge.  At right, the tree $T'$ formed by replacing the branch defined by each color with a single leaf.}
    \label{fig:shrink}
\end{figure}

\subsection{Equations implying strong separation}

We now show that if $\vec{x}$ satisfies the Monin--Rana equations, then the points colored `$Z$' (corresponding to $y_i=0$) on the associated curve $C'$ to $\vec{x}'$ are separated from the other colors by an edge.  

\begin{lemma}\label{lem:eqns-imply-strong-separation}
Let $C'$ be a curve in $\Mbar_{0,S\setminus n}$ and let $\vec{x}'\in \PP^1\times \cdots \times \PP^{n-1}$ be its associated coordinates.  Let $y=[y_b:y_c:y_1:\cdots:y_{n-1}]$ be a point in $\PP^n$, and suppose the combined vector of coordinates $(\vec{x}',y)\in \PP^1\times \cdots \times \PP^n$ satisfies the Monin--Rana equations.  If we color the leaves of the dual tree $T'$ by the rule:
  \begin{itemize}
      \item Color leaf $a$ by $R$,
      \item Color leaf $i$ by $R$ if $y_i=0$,
      \item Color leaf $i$ by $G$ if $y_i\neq 0$,
  \end{itemize}
  then this coloring of $T'$ satisfies the strong separation property.
\end{lemma}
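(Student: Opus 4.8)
The plan is to rule out both an $R$-type and a $G$-type bad configuration by deriving a contradiction directly from an appropriate Monin--Rana minor, namely a $2\times 2$ minor of $\mathrm{Mat}_{i,n}$. The essential point is that $\mathrm{Mat}_{i,n}$ couples the $\PP^i$ coordinates $x^{(i)}$ coming from $\vec{x}'$ with the $\PP^n$ coordinates $y$: for $\ell\in\{b,c,1,\ldots,i-1\}$ its two rows read $x_\ell^{(i)}(y_\ell-y_i)$ and $y_\ell$ (here $x_i^{(n)}=y_i$). Vanishing of the minor on columns $\ell=j$ and $\ell=k$ therefore gives the single scalar relation
\[
x_j^{(i)}(y_j-y_i)\,y_k \;=\; x_k^{(i)}(y_k-y_i)\,y_j,
\]
valid whenever $j,k<i$. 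I will feed the color hypotheses (which of $y_i,y_j,y_k$ are zero or nonzero) into this relation to force a coordinate equality that contradicts the assumed separation.

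Before doing so I must translate the tree-separation conditions into statements about the coordinates $x^{(i)}$. The key observation is that the $\PP^i$ factor $x^{(i)}$ equals $\psi_i(C'')$, where $C''=\pi_{i+1}\circ\cdots\circ\pi_{n-1}(C')$ is the curve with dual tree $T'|_i$. Applying Proposition \ref{prop:PsiCoordinates} to $C''$ then yields the two dictionary entries I need: leaf $i$ separates $j$ and $k$ in $T'|_i$ if and only if $x_j^{(i)}\neq x_k^{(i)}$ (part (1)), and leaf $i$ separates $a$ and $j$ in $T'|_i$ if and only if $x_j^{(i)}\neq 0$ (part (2)). Note also that, since the two lower indices in either definition must be distinct and both lie below $i$ among $\{b,c,1,\ldots\}$, the index $i$ in any bad configuration is forced to be numeric, so $\mathrm{Mat}_{i,n}$ is genuinely one of the defining matrices and columns $j,k$ are both present.

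With these in hand the two cases are short. For an $R$-type bad configuration we have $y_i=0$ (as $i$ is colored $R$ and $i\neq a$) while $y_j,y_k\neq 0$; substituting $y_i=0$ into the displayed relation and cancelling the nonzero factor $y_jy_k$ gives $x_j^{(i)}=x_k^{(i)}$, contradicting the separation of $j$ and $k$. For a $G$-type bad configuration we have $y_i\neq 0$, $y_k\neq 0$, and $y_j=0$; substituting $y_j=0$ leaves $x_j^{(i)}\,y_i\,y_k=0$, forcing $x_j^{(i)}=0$ and contradicting the separation of $a$ and $j$. In both cases the contradiction rules out the configuration, establishing the strong separation property.

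The one genuinely delicate step, and the place to take care, is the reduction in the second paragraph: the separation hypotheses in the definition of a bad configuration concern the \emph{forgotten} tree $T'|_i$, not $T'$ itself, so it is essential to apply Proposition \ref{prop:PsiCoordinates} to the correct curve $C''=C'|_i$ whose $\psi_i$-image is exactly $x^{(i)}$. Once this matching of the coordinates $x^{(i)}$ with the branches of $T'|_i$ at $v_i$ is pinned down, the algebraic contradictions are routine; the role of the auxiliary index $k$ in each definition is precisely to supply the nonzero second column needed to run the minor computation.
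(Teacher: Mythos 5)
Your proof is correct and follows essentially the same route as the paper: in both cases you use the $2\times 2$ minor of $\mathrm{Mat}_{i,n}$ on columns $j,k$, translate the separation hypotheses into $x_j^{(i)}\neq x_k^{(i)}$ (resp. $x_j^{(i)}\neq 0$) via Proposition \ref{prop:PsiCoordinates} applied to $\psi_i$ on the forgotten curve, and derive the same contradiction. The extra care you take in pinning down that the separation conditions live in $T'|_i$ and that $i$ must be numeric is a welcome, if minor, clarification of the paper's argument.
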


\begin{proof}
  Assume for contradiction that it does not satisfy the strong separation property.  Then there is either a $G$-type or $R$-type bad configuration.
  
  \textbf{Case 1.} Suppose there is an $R$-type bad configuration, with indices $i,j,k$ colored $R,G,G$ respectively, where $j,k<i$ and leaf edge $i$ is attached to the path from $j$ to $k$ in $T'|_i$.  Thus, if  $[x_b:x_c:\cdots:x_{i-1}]$ are the $\PP^i$ coordinates of $\vec{x}'$, we have that $x_j\neq x_k$ by Proposition~\ref{prop:PsiCoordinates}(1) applied to $\psi_i$.
  
  By the definition of the coloring we also have $y_i=0$, and $y_j,y_k\neq 0$.  Thus, the Monin--Rana $2\times 2$ minor given by indices $j,k$ between the $x$ and $y$ variables is:
  
  $$\det \begin{pmatrix}
  x_j(y_j-y_i) & x_k(y_k-y_i) \\
  y_j & y_k
  \end{pmatrix}=
  \det \begin{pmatrix}
  x_j(y_j) & x_k(y_k) \\
  y_j & y_k
  \end{pmatrix}$$
  which is nonzero because $y_k,y_j\neq 0$ and $x_j\neq x_k$. This contradicts the assumption that $(\vec{x}',y)$ satisfies the Monin--Rana equations.
  
  \textbf{Case 2.}  Suppose there is a $G$-type bad configuration, with indices $i,j,k$ colored $G,R,G$ respectively, with $j,k<i$ and leaf edge $i$ is attached to the path from $a$ to $j$ in $T'|_i$.  Then in the coordinates $[x_b:x_c:\cdots:x_{i-1}]$ of $\PP^i$, we have $x_j\neq 0$ by Proposition~\ref{prop:PsiCoordinates}(2) applied to $\psi_i$.  Furthermore, because of the labeling, we have $y_j=0$ and $y_i,y_k\neq 0$.  Thus, in this case the relevant $2\times 2$ matrix is
  
    $$\begin{pmatrix}
  x_j(y_j-y_i) & x_k(y_k-y_i) \\
  y_j & y_k
  \end{pmatrix}=
  \begin{pmatrix}
  -x_jy_i & x_k(y_k-y_i) \\
  0 & y_k
  \end{pmatrix}$$
  whose determinant is $-x_jy_iy_k$, which is nonzero, a contradiction.  Therefore, the coloring of $T'$ satisfies the strong separation property.
\end{proof}

We now show that, similarly, the Monin--Rana equations force any other color $G$ to be separated from the rest.

\begin{lemma}\label{lem:eqns-imply-general-strong-separation}
Let $C'\in \Mbar_{0,S\setminus n}$, and let $\vec{x}'\in \PP^1\times \cdots \times \PP^{n-1}$ be its associated coordinates.  Let $y=[y_b:y_c:y_1:\cdots:y_{n-1}]$ be a point in $\PP^n$, and suppose the combined vector of coordinates $(\vec{x}',y)\in \PP^1\times \cdots \times \PP^n$ satisfies the Monin--Rana equations.  Let $\beta$ be one of the nonzero coordinate values among $y_b,y_c,\ldots,y_{n-1}$ (under some chosen scaling).  Then if we color the leaves of the dual tree $T'$ of $C'$ by the rule:
  \begin{itemize}
      \item Color leaf $a$ by $R$,
      \item Color leaf $i$ by $R$ if $y_i\neq \beta$,
      \item Color leaf $i$ by $G$ if $y_i=\beta$,
  \end{itemize}
  then this coloring satisfies the strong separation property.
\end{lemma}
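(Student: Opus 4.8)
The plan is to argue by contradiction in exact parallel with the proof of Lemma \ref{lem:eqns-imply-strong-separation}, using the hypothesis $\beta \neq 0$ in place of the special role played there by the zero color. Suppose the coloring fails strong separation; then $T'$ contains an $R$-type or a $G$-type bad configuration, and in each case I would produce a $2 \times 2$ minor of the Monin--Rana matrix $\mathrm{Mat}_{i,n}$ (the matrix comparing the $\PP^i$ and $\PP^n$ coordinates) whose value is forced to be nonzero, contradicting that $(\vec{x}', y)$ satisfies the equations. Writing $[x_b : x_c : \cdots : x_{i-1}]$ for the $\PP^i$ coordinates of $\vec{x}'$, the relevant minor in columns $j, k$ is $\det\begin{pmatrix} x_j(y_j - y_i) & x_k(y_k - y_i) \\ y_j & y_k \end{pmatrix}$, exactly as in the previous lemma.

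For an $R$-type configuration, indices $i, j, k$ are colored $R, G, G$ with $j, k < i$ and leaf $i$ separating $j, k$ in $T'|_i$; Proposition \ref{prop:PsiCoordinates}(1) applied to $\psi_i$ then gives $x_j \neq x_k$, while the coloring forces $y_j = y_k = \beta$ and $y_i \neq \beta$. Substituting, the minor factors as $\beta(\beta - y_i)(x_j - x_k)$, which is nonzero. For a $G$-type configuration, indices $i, k$ are colored $G$ and $j$ colored $R$ with $j, k < i$ and leaf $i$ separating $a, j$ in $T'|_i$; Proposition \ref{prop:PsiCoordinates}(2) applied to $\psi_i$ gives $x_j \neq 0$, and the coloring forces $y_i = y_k = \beta$ and $y_j \neq \beta$. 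Here the $(1,2)$ entry $x_k(\beta - \beta)$ vanishes and the minor collapses to $\beta x_j(y_j - \beta)$, again nonzero. Either case contradicts the Monin--Rana equations, so the coloring is strongly separated.

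The only substantive change from Lemma \ref{lem:eqns-imply-strong-separation} is the source of the nonvanishing: there one bottom-row entry was literally zero, whereas here every coordinate in play may be nonzero, and the determinant stays nonzero precisely because $\beta \neq 0$ combines with the factor $\beta - y_i \neq 0$ (in the $R$-type case) or with the vanishing of $y_k - y_i = \beta - \beta$ (in the $G$-type case). Consequently I expect no genuinely new difficulty; the only point requiring care is the bookkeeping of which of $y_i, y_j, y_k$ equal $\beta$ under the swapped color convention, together with correctly invoking the hypothesis that $\beta$ is one of the \emph{nonzero} coordinate values, which is exactly what keeps each determinant from collapsing to zero.
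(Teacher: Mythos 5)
Your proof is correct and follows essentially the same route as the paper's: a contradiction argument splitting into the $R$-type and $G$-type bad configurations, invoking Proposition \ref{prop:PsiCoordinates}(1) and (2) respectively to get $x_j \neq x_k$ and $x_j \neq 0$, and computing the same $2\times 2$ minors to the same nonzero values $\beta(\beta - y_i)(x_j - x_k)$ and $\beta x_j(y_j - \beta)$. No gaps.
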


\begin{proof}
  Assume for contradiction that it does not satisfy the strong separation property.  Then there is either an $R$-type or $G$-type bad configuration.
  
  \textbf{Case 1.} Suppose there is an $R$-type bad configuration, with indices $i,j,k$ colored $R,G,G$ respectively, where $j,k<i$ and leaf edge $i$ attached to the path from $j$ to $k$ in $T'|_i$.  Thus, if we use the variables $[x_b:x_c:\cdots:x_{i-1}]$ for the $\PP^i$ coordinates, we have that $x_j\neq x_k$.  
  
  By the definition of the coloring we also have $y_i\neq \beta$, and $y_j,y_k=\beta$.  Thus, the Monin--Rana equation given by indices $j,k$ between the $x$ and $y$ variables looks like:
  
  $$\begin{pmatrix}
  x_j(y_j-y_i) & x_k(y_k-y_i) \\
  y_j & y_k
  \end{pmatrix}=
  \begin{pmatrix}
  x_j(\beta-y_i) & x_k(\beta-y_i) \\
  \beta & \beta
  \end{pmatrix}$$
  which has a nonzero determinant because $y_i\neq \beta$, $\beta\neq 0$, and $x_j\neq x_k$. This contradicts the assumption that the Monin--Rana equations are satisfied.
  
  \textbf{Case 2.}  Suppose there is a $G$-type bad configuration, with indices $i,j,k$ colored $G,R,G$ respectively, with $j,k<i$ and leaf edge $i$ attached to the path from $a$ to $j$ in $T'|_i$.  Then in the coordinates $[x_b:x_c:\cdots:x_{i-1}]$ of $\PP^i$, we have $x_j\neq 0$.  Furthermore, because of the labeling, we have $y_j\neq \beta$ and $y_i=y_k=\beta$.  Thus, in this case the relevant $2\times 2$ matrix is
  
    $$\begin{pmatrix}
  x_j(y_j-y_i) & x_k(y_k-y_i) \\
  y_j & y_k
  \end{pmatrix}=
  \begin{pmatrix}
  x_j(y_j-\beta) & 0 \\
  y_j & \beta
  \end{pmatrix}$$
  whose has nonzero determinant since $x_j\neq 0$, $\beta\neq 0$, and $y_j\neq \beta$.  This completes the proof.
\end{proof}

\subsection{Main proof}

We now prove the set-theoretic equality.  Our strategy will be to show, by induction on $n$, that if a tuple of coordinates $\vec{x}\in \PP^1\times \PP^2 \times \cdots \times \PP^n$ satisfies the Monin--Rana equations, then there is a curve $C\in \Mbar_{0,S}$ such that $\Omega_n(C)=\vec{x}$.  The base case, $n=1$, is immediate since there are no Monin--Rana equations and $\Omega_1:\Mbar_{0,abc1}\to \PP^1$ is an isomorphism.

For the induction, suppose the statement holds for $n-1$ and let $\vec{x}\in \PP^1\times \cdots \times\PP^n$ satisfy the Monin--Rana equations.  Let $C'$ be the curve corresponding to the first $n-1$ coordinates $\vec{x}'$ by the induction hypothesis, and let $T'$ be its at-least-trivalent dual tree.  Let $[y_b:y_c:\cdots:y_{n-1}]$ be the $\PP^n$ coordinates of $\vec{x}$, and color each leaf of $T'$ as follows:
\begin{itemize}
    \item Color leaf $a$ by $Z$ (for $0$)
    \item Color all other leaves $j$ by the value $y_j$ (where $Z$ represents the color $0$).
\end{itemize}

In order to construct the new curve $C$ from $C'$, we will find a place to insert the marked point $n$ by first proving two lemmas about this coloring.

\begin{definition}
We say the $y$ coordinates are \textbf{binary} if there is a rescaling of $[y_b:y_c:\cdots:y_{n-1}]$ such that each coordinate is either $0$ or $1$.
\end{definition}

\begin{lemma}\label{lem:monochromatic}
  If $y$ is a binary vector, then there is a unique edge $e$ that separates the two colors.  Otherwise, there is a unique vertex $v$ in $T'$ such that the branches at $v$ are each monochromatic, and no two of the branches have the same color.
\end{lemma}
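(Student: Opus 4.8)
The plan is to reduce $T'$, via the two separation lemmas and Corollary \ref{cor:shrink-colors}, to a colored tree with a single leaf per color, and then to split into cases according to the number of colors.

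First I would check that the hypotheses of Corollary \ref{cor:shrink-colors} hold, i.e. that every color is strongly separated from the union of the remaining colors. The color $Z$ (the leaves with $y_i = 0$, together with $a$) is separated from the rest by Lemma \ref{lem:eqns-imply-strong-separation}, and each individual nonzero color $\beta$ is separated from the rest by Lemma \ref{lem:eqns-imply-general-strong-separation}. Hence each color occupies a single monochromatic branch of $T'$, cut off by a unique edge, and contracting each such branch yields an at-least-trivalent tree $T''$ whose leaves carry distinct colors. Since $y \neq 0$ at least one nonzero color appears, and $a$ always contributes color $Z$, so $T''$ has at least two leaves, with exactly two precisely when $y$ is binary.

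If $y$ is binary there are exactly two colors and the coloring is the one appearing in Lemma \ref{lem:eqns-imply-strong-separation}, so Lemma \ref{lem:strongsep} provides the unique separating edge $e$. The substance of the proof is therefore the non-binary case, where $T''$ has at least three distinct-colored leaves and I must show that $T''$ is a \emph{star}, i.e. that all color-branches of $T'$ meet at a single vertex $v$. Granting this, $v$ is the preimage of the star center, and its branches are exactly the (monochromatic, distinctly colored) color-branches; moreover $v$ is unique, because any other vertex lies inside a single color-branch, so its branch toward $v$ meets at least two of the (three or more) colors and hence is not monochromatic.

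To prove that $T''$ is a star I would argue by contradiction directly from the Monin--Rana equations, since the separation lemmas alone do not force it. The engine is the vanishing of a single $2\times 2$ minor of $\mathrm{Mat}_{i,n}$ in columns $j,k$ (with $j,k<i$), namely $x_j^{(i)}(y_j - y_i)y_k = x_k^{(i)}(y_k - y_i)y_j$, read through Proposition \ref{prop:PsiCoordinates}: if $y_i \neq 0$ and $j,k$ lie on a common branch at $v_i$ that avoids $a$, then $x_j^{(i)} = x_k^{(i)} \neq 0$ and the minor forces $y_j = y_k$; whereas if instead $j$ lies on $a$'s branch at $v_i$ and $k$ does not, the minor forces $y_j = 0$ or $y_k = y_i$. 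If $T''$ were not a star, some quartet of colors would be resolved, say with split $c_p c_q \mid c_r c_s$; choosing an index $i$ of maximal label among representatives of the four colors and a suitable pair $j,k$ of smaller labels on prescribed branches at $v_i$, one of the two implications above produces a forbidden coincidence among the distinct colors $c_p, \ldots, c_s$.

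The main obstacle is precisely the selection of $i,j,k$ and the bookkeeping around the $Z$-branch. Because $a$ has no column in $\mathrm{Mat}_{i,n}$, whenever the $Z$-branch sits on the side of the resolved quartet opposite to $i$ one of the two needed representatives is unavailable, and the contradiction must instead be extracted from the second implication (using one representative on $a$'s branch together with one off it, exactly as in Case 2 of Lemmas \ref{lem:eqns-imply-strong-separation} and \ref{lem:eqns-imply-general-strong-separation}). Arranging an extremal choice of quartet and index so that one of these two mechanisms always fires, for every non-star configuration, is the crux of the argument.
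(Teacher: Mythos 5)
Your reduction is sound and matches the paper's: both separation lemmas plus Corollary \ref{cor:shrink-colors} give the contracted tree $T''$ with one distinctly-colored leaf per color, the binary case follows from Lemma \ref{lem:strongsep}, and the content of the non-binary case is exactly that $T''$ is a star (your uniqueness argument for $v$ is also fine). But the proof has a genuine gap at precisely the point you flag as ``the crux'': you never actually produce, for a given non-star configuration, a triple $i,j,k$ with $j,k<i$ whose $2\times 2$ minor of $\mathrm{Mat}_{i,n}$ fails to vanish. The difficulty is not mere bookkeeping. For the minor in columns $j,k$ to carry information, you must control where the leaf $i$ attaches in $T'|_i$ (so that Proposition \ref{prop:PsiCoordinates} tells you which of $x_j^{(i)}, x_k^{(i)}$ vanish or coincide), and this requires $i$ to be \emph{minimal} in its color branch or sub-branch --- a ``representative of maximal label'' chosen without that constraint may attach deep inside its branch in $T'|_i$, leaving both $j$ and $k$ on the $a$-side and the minor identically zero. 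The tension between ``$i$ minimal in its branch'' and ``$i>j,k$'' is exactly what cannot always be satisfied simultaneously, and is why a case split is forced.

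The paper resolves this not with quartets but by walking away from the $Z$-side vertex $v$ to an extremal vertex $v_0$ all of whose non-$Z$ branches are monochromatic, ordering those branches by their minimal labels $m_t<m_u$, and splitting on whether some third nonzero color has a leaf with label below $m_u$. In the first case it takes $i=m_u$ (minimal in $B_u$, hence adjacent to $v_0$ in $T'|_i$) against $j=m_t$ and a small-label leaf $k$ of the third color, which is forced into the $Z$-side branch so $x_k^{(i)}=0$ while $x_j^{(i)}\neq 0$; in the second case it takes $i$ to be the minimum of a \emph{different} branch at $v$ (which exceeds $m_u,m_t$ by the case hypothesis) and uses $x_j^{(i)}=x_k^{(i)}\neq 0$ to force $y_j=y_k$. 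Both mechanisms are the ones you identified, but the specific extremal choices --- and the verification that one of them always applies --- are the substance of the proof and are missing from your write-up.
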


\begin{proof}
By Lemma \ref{lem:eqns-imply-strong-separation}, the coloring of $T'$ defined above satisfies the strong separation property with $R=Z$, and $G=I$ being the union of all nonzero colors.  Therefore, there is a unique edge $e$ that separates the $Z$'s and $I$'s by Lemma \ref{lem:strongsep}.  

If there is only one nonzero color $I$, that is, if $y$ is a binary vector, then this edge $e$ separates the two colors, proving the first statement of the lemma.

  Now suppose $y$ is not binary, so there are at least three colors, and let $v$ be the $I$-side vertex of edge $e$.  Consider the tree $T''$ formed by replacing each monochromatic branch by a leaf, as in Corollary \ref{cor:shrink-colors}.  Note that in $T''$, there is one leaf colored $Z$ attached to $v$.   If $T''$ consists entirely of leaves attached to $v$, we are done; otherwise, by following an arbitrary path oriented away from $Z$ in the tree, we can find a vertex $v_0\neq v$ such that every branch at $v_0$ not containing the leaf colored $Z$ is a leaf in $T''$.  This means that, at the corresponding vertex $v_0$ in $T'$, each branch other than the branch containing the $Z$-colored leaves is monochromatic (and each has a different color).

 We will show that in fact the assumption that $v_0\neq v$ leads to a contradiction, as follows.
  Let $u$ and $t$ be the colors of the branches $B_u, B_t$ at $v_0$ in $T'$ with the two smallest minimal labels among the branches away from $Z$. Let $m_u$ and $m_t$ be the minimal labels of $B_u$ and $B_t$, respectively, and assume without loss of generality that $m_u>m_t$.  We consider two cases:
  
  \textbf{Case 1:} Suppose there exists a leaf colored some nonzero color $s\neq u,t$ whose label $k$ satisfies $k<m_u$.  In this case, for simplicity of notation we set $i=m_u$ and $j=m_t$.  Then $y_i=u$, $y_j=t$, and $y_k=s$.  
  
  Since $i>j$ and $i=m_u$ is minimal in $B_u$, we know that in $T'|_i$, $i$ separates $j$ from the $Z$ branch and so we see that $x_j^{(i)}\neq 0$. Moreover, since $k$ has a different color than $i$ or $j$, and we assumed $i,j$ are the two smallest minimal elements of all the branches at $v_0$ not containing $Z$, and $i > k$, then $k$ is in the branch at $v_0$ that contains the $Z$-colored leaves. Therefore, in $T'|_i$, $i$ does not separate $k$ from the $Z$ branch, so $x_k^{(i)}=0$.  Thus, the $2\times 2$ matrix
  $$\begin{pmatrix}
  x_j^{(i)}(y_j-y_i) & x_k^{(i)}(y_k-y_i) \\
  y_j & y_k
  \end{pmatrix}=
  \begin{pmatrix}
  x_j^{(i)}(t-u) & 0 \\
  y_j & s
  \end{pmatrix}$$
  has determinant $x_j^{(i)}(t-u)s$, which is nonzero by the discussion above.  This gives a contradiction in this case.

  \textbf{Case 2:} Suppose there is no leaf as in Case 1, that is, all leaves of all other colors $s\neq u,t$ are larger than $m_u$.  In this case, for simplicity of notation, we let $j=m_u$ and $k=m_t$, and let $i$ be the minimal element of any other nonzero branch $B$ at the original vertex $v$.  Then $i>j,k$ and $i$ separates $j,k$ from $a$ in $T|_i$.
  
  Now we have $y_j=t$, $y_k=u$, $y_i=s$, and we also know $x_k^{(i)}=x_j^{(i)}\neq 0$ (since $v\neq v_0$).  Then we have the $2\times 2$ matrix
  $$\begin{pmatrix}
  x_j^{(i)}(y_j-y_i) & x_k^{(i)}(y_k-y_i) \\
  y_j & y_k
  \end{pmatrix}=
  \begin{pmatrix}
  x_j^{(i)}(t-s) & x_j^{(i)}(u-s) \\
  t & u
  \end{pmatrix}$$
  whose determinant is $$x_j^{(i)}u(t-s)-x_j^{(i)}t(u-s)=x_j^{(i)}ts-x_j^{(i)}us=x_j^{(i)}s(t-u)\neq 0$$ and again we have a contradiction.
\end{proof}

We can now prove the main result.

\begin{thm}\label{thm:set-theoretic}
Suppose a point $\vec{x}$ in $\PP^1\times \PP^2 \times \cdots \times \PP^n$ satisfies the Monin--Rana equations. Then there is a curve $C\in \Mbar_{0,abc1\cdots n}$ such that $\Omega_n(C)=\vec{x}$.
\end{thm}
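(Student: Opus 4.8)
The plan is to prove the theorem by induction on $n$, following the setup already established in this section. The base case $n=1$ is immediate, since $\Omega_1$ is an isomorphism and there are no equations. For the inductive step, given $\vec{x}$ satisfying the Monin--Rana equations, I let $\vec{x}'$ be its first $n-1$ coordinate vectors; these satisfy the Monin--Rana equations for $n-1$, so the inductive hypothesis yields a curve $C'$ with at-least-trivalent dual tree $T'$ and $\Omega_{n-1}(C')=\vec{x}'$. Writing $[y_b:y_c:\cdots:y_{n-1}]$ for the $\PP^n$ coordinates of $\vec{x}$, I color the leaves of $T'$ by their $y$-values (with $a$ and every leaf having $y_i=0$ colored $Z$). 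The entire remaining task is to locate a point on $C'$ at which to attach the new marked point $n$, forming a curve $C$ with $\pi_n(C)=C'$ and $\psi_n(C)=y$: the first condition forces the initial $n-1$ factors of $\Omega_n(C)$ to equal $\vec{x}'$, since those factors are $\Omega_{n-1}\circ\pi_n$, and the second forces the last factor to equal $y$.

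The combinatorial location of the insertion site is supplied by Lemma \ref{lem:monochromatic}, and I would split into the two cases it distinguishes. If $y$ is binary, there is a unique edge $e$ of $T'$ separating $Z$ from the single nonzero color; I form $C$ by subdividing $e$ with a new trivalent vertex carrying the leaf $n$, creating a new three-pointed $\PP^1$ component. If $y$ is not binary, there is a unique vertex $v$ all of whose branches are monochromatic and distinctly colored; here I attach $n$ as a new marked point on the existing component of $C'$ corresponding to $v$. In either case, Proposition \ref{prop:PsiCoordinates}(1)--(2) shows that for any placement of $n$ at this site the resulting $\psi_n(C)$ already has the correct qualitative pattern: the coordinate $t_\ell$ vanishes exactly on the $Z$ branch, and two coordinates coincide exactly when their leaves share a color. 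The same description of the branches at $v_n$ shows that forgetting $n$ recollapses the newly created component (binary case) or simply deletes the marked point from an already-stable component (non-binary case), so that $\pi_n(C)=C'$.

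What remains is to pin down the \emph{exact} position of $n$ so that the nonzero coordinates of $\psi_n(C)$ attain the prescribed distinct values, not merely the correct pattern. I would reduce this to the interior case: contract each branch at the insertion vertex to a single representative leaf, choosing representatives $a,i_1,\dots,i_k$, to obtain a single-component interior curve. By Proposition \ref{prop:PsiCoordinates}(3) its Kapranov coordinates agree with the corresponding coordinates of $\vec{x}'$, and the relevant $2\times 2$ minors become submatrices of the global Monin--Rana matrices (after this identification of coordinates), hence vanish; so the contracted data together with $[y_{i_1}:\cdots:y_{i_k}]$ satisfies the smaller Monin--Rana equations with all coordinates nonzero and distinct. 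Proposition \ref{prop:interior} then produces a genuine interior curve realizing these coordinates, and in particular the position $p_n$ of $n$ on the contracted component (computed as the ratio $\lambda$ of a single minor, via the injective M\"obius map $f(y)=\lambda y/(y-y_{i-1})$ from its proof). Gluing $n$ at the corresponding point of $C'$ and then using Proposition \ref{prop:PsiCoordinates}(1) to propagate each representative's value across its whole branch gives $\psi_n(C)=y$, completing the induction.

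I expect the main obstacle to be exactly this last step. The strong-separation machinery (Lemmas \ref{lem:eqns-imply-strong-separation} and \ref{lem:eqns-imply-general-strong-separation}, through Lemma \ref{lem:monochromatic}) certifies only the combinatorial structure of the insertion, namely that each color occupies its own branch; it does not by itself force the several nonzero coordinate values to be simultaneously realizable by one position of $n$, a condition that is over-determined once three or more nonzero colors appear. The real content is that the full Monin--Rana equations among the representative indices impose precisely the consistency needed, and the cleanest route is to package the contracted local data and invoke the already-proven interior case rather than redo the M\"obius computation by hand. One must also check that the chosen $p_n$ is a genuinely new point, distinct from the existing nodes and marked points of the component; this holds because each target value $y_{i_j}$ is finite and nonzero, so $p_n$ avoids all the branch attachment points.
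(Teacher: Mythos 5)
Your proposal is correct and follows essentially the same route as the paper: locate the insertion site via Lemma \ref{lem:monochromatic}, handle the binary case by subdividing the separating edge, and in the non-binary case contract to branch representatives, observe that the restricted data satisfies the smaller Monin--Rana system with nonzero distinct coordinates, and invoke Proposition \ref{prop:interior} to pin down $p_n$ before propagating values across branches. The paper packages the contraction as explicit forgetful maps $\pi_{\vec{i}}, \pi_{\vec{i},n}$ and auxiliary curves, but the argument is the same.
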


\begin{figure}
    \centering
\begin{tikzcd}
C\in \Mbar_{0,S}\arrow[r,"\pi_{\vec{i},n}"]\arrow[d]&\Mbar_{0,\{a,i_1,\dots,i_m,n\}}\arrow[d]&\arrow[l,hook'] M_{0,\{a, i_1,\dots,i_m,n\}}\arrow[d,"\pi_n"] \ni C'''\\
C'\in \Mbar_{0,S\setminus n}\arrow[r,"\pi_{\vec{i}}"] & \Mbar_{0,\{a,i_1,\dots,i_m\}}&\arrow[l,hook'] M_{0,\{a,i_1,\dots,i_m\}}\ni C''&
\end{tikzcd}
    \caption{We construct the curve $C$ from $C'$ (via $C''$ and $C'''$) in the proof of Theorem \ref{thm:set-theoretic}.}
    \label{fig:Cdiagram}
\end{figure}

\begin{proof}
 Using the inductive setup above and the notation of Lemma \ref{lem:monochromatic}, recall that $\vec{x} = \vec{x}'\times y$, where $y$ are the $\bP^n$ coordinates of $\vec{x}$, and $C'\in \overline{M}_{0,S\setminus n}$ such that $\Omega_{n-1}(C') = \vec{x}'$.  Let $T$ be the tree formed from $T'$ by:
  \begin{enumerate}
      \item If $y$ is a binary vector, insert $n$ in the middle of the edge $e$.
      \item Otherwise, insert $n$ at the vertex $v$.
  \end{enumerate}
  Note that if the desired curve $C$ with $\Omega_n(C)=\vec{x}$ exists then it must have dual tree $T$ in order for all the $Z$-side leaves to correspond to the zero coordinates of the $y$ vector. We now construct the curve $C$.
  
  In case (1), we are done since the curve $C$ is uniquely defined (since the marked point $n$ is inserted at a node of the curve) and maps to the specified coordinates under $\Omega_n$.
  
  In case (2), we show there is a unique place to insert the marked point $n$ on the component corresponding to vertex $v$ to form the curve $C$, as follows.  Let $a < i_1<\cdots < i_m$ be the minimal labels of the branches at $v$ in $T'$.  We put
\begin{align*}
\pi_{\vec{i}, n} &: \Mbar_{0,S} \to \overline{M}_{0,\{a,i_1,\dots, i_m, n \}}, \\
\pi_{\vec{i}} &: \Mbar_{0,S\setminus n} \to \overline{M}_{0,\{a,i_1,\dots, i_m \}}, \\
\pi &: \mathbb{P}^{n} \dashrightarrow \mathbb{P}^{m-1},
\end{align*}
where $\pi_{\vec{i}, n}$ and $\pi_{\vec{i}}$ are the forgetting maps obtained by forgetting all other marked points besides $a,i_1,i_2,\ldots,i_m$ (and $n$), and $\pi$ is the rational map
\[\pi([y_b : y_c : y_1 : \cdots: y_{n-1}]) = [y_{i_1} : \cdots : y_{i_m}].\]
(See Figure \ref{fig:Cdiagram}.) Note that if $C \in \Mbar_{0,S}$ is obtained by inserting $n$ somewhere on the component of $C'$ corresponding to $v$, then
\[\psi_{n}(\pi_{\vec{i},n}(C)) = \pi(\psi_{n}(C))\] 
holds by definition since $[y_{i_1} : \cdots : y_{i_j}]$ is not the zero vector.

We will construct $C$ via two auxiliary curves $\localcurve$ and $\localcurvewithpn$ (see Figure \ref{fig:Cdiagram}). First, let $\localcurve = \pi_{\vec{i}}(C')$. In fact, $\localcurve$ is in the interior $M_{0,\{a,i_1, \ldots, i_m\}}$. In addition, since each $i_j$ is minimal in its branch, $i_j$ is adjacent to $v$ in the tree after forgetting the marked points $> i_j$. Therefore, $\Omega_{m-2}(\localcurve) \in \mathbb{P}^1 \times \cdots \times \mathbb{P}^{m-2}$ is the point whose $\PP^{j-2}$ coordinates (for $j\ge 3$) are the restriction of the $\PP^{i_j}$ coordinates of $\vec{x}$ to indices $i_1,\dots, i_{j-1}$. 

Let $\localyvector = \pi(y)$; this is well defined since by hypothesis $y_{i_1}, \ldots, y_{i_m}$ are not all zero.  Note that the point $\Omega_{m-2}(\localcurve)\times \localyvector$ in $\PP^1\times\cdots\times \PP^{m-1}$ satisfies the Monin--Rana equations;
    indeed, the equations involving $\localyvector$ for this point are the $2\times 2$ minors
    \[\det
    \begin{pmatrix}
    x_{i_j}(y_{i_j}-y_{i_m}) & x_{i_k}(y_{i_k}-y_{i_m})\\
    y_{i_j} & y_{i_k}
    \end{pmatrix}
    \]
but these are just a subset of the Monin--Rana equations for $\vec{x}$, which hold by assumption. Similar logic applies to the Monin--Rana equations involving only coordinates from $\vec{x}'$.
    
By Proposition~\ref{prop:interior}, since the coordinates of $\localyvector$ are nonzero and distinct by Lemma \ref{lem:monochromatic}, there exists a unique stable curve $\localcurvewithpn$ in the interior $M_{0,\{a,i_1,\dots,i_m,n\}}$ such that $\Omega_{m-1}(\localcurvewithpn) = \Omega_{m-2}(\localcurve)\times \localyvector$.  We now define $C \in \Mbar_{0,S}$ by inserting $n$ on the $v$ component of $C'$ at the point corresponding to $p_n \in \localcurvewithpn$, so that $\pi_n(C)=C'$ and $\pi(\psi_n(C)) = \psi_n(\localcurvewithpn)$. Therefore, up to a common nonzero scalar we have
\[
\psi_n(C)_{i_j} = \pi(\psi_n(C))_j = \psi_n(\localcurvewithpn)_j = \localyvector_j = y_{i_j}.\]
Since $n$ is inserted on the $v$ component of $C'$, we also have $\psi_n(C)_\ell = \psi_n(C)_{i_j} = y_{i_j} = y_\ell$ whenever $\ell$ is in the same branch as $i_j$ (by Lemma \ref{lem:monochromatic}). Therefore, $\psi_n(C) = y$ as required, so $\Omega_n(C) = \Omega_{n-1}(C')\times \psi_n(C) = \vec{x}'\times y = \vec{x}$.
\end{proof}

\section{Scheme-theoretic equality}\label{sec:scheme-theoretic}

Let $\MR_n \subseteq \PP^1 \times \cdots \times \PP^n$ be the subscheme defined by the Monin--Rana equations. Theorem \ref{thm:set-theoretic} and Proposition \ref{prop:inclusion} imply the set-theoretic equality $\Omega_n(\Mbar_{0,S}) = \MR_n$. In particular, $\MR_n$ is irreducible of dimension $n$.
We now turn to upgrading this to an equality of subschemes:

\begin{MainThm}
  We have $\Omega_n(\Mbar_{0,S}) = \MR_n$ as subschemes of $\PP^1 \times \cdots \times \PP^n$.
\end{MainThm}

We begin by making several reductions to set up the strategy for the proof below.  By Theorem \ref{thm:set-theoretic}, both schemes have the same closed points, so it is enough to show scheme-theoretic equality locally at each point $C \in \Mbar_{0,S}$. We have the scheme-theoretic inclusion 
\begin{equation} \label{eq:scheme-theoretic-subseteq}
    \Omega_n(\Mbar_{0,S}) \subseteq \MR_n
\end{equation}
by Proposition \ref{prop:inclusion}. We wish to show the reverse inclusion. Since $\Mbar_{0,S}$ is smooth and $\Omega_n$ is an embedding, it is enough to show the equality of Zariski tangent spaces
\begin{equation}
\label{eq:tg-spaces}
    T_{\Omega_n(C)}\Omega_n(\Mbar_{0,S}) = T_{\Omega_n(C)}\MR_n,
\end{equation}
which will imply that $\MR_n$ is reduced. The $(\subseteq)$ direction holds by Equation \eqref{eq:scheme-theoretic-subseteq}, so it is enough to show
\begin{equation}\label{eq:dim-tg-ineq}
\dim T_{\Omega_n(C)}\MR_n \leq \dim T_{\Omega_n(C)}\Omega_n(\Mbar_{0,S}) = n.
\end{equation}
We demonstrate the inequality \eqref{eq:dim-tg-ineq} by linearizing the Monin--Rana equations and counting the induced equations on the tangent space, and using induction on $n$. 
Let $\vec{x} = \Omega_n(C)$ and let 
\[\mathrm{pr}_n : \PP^1 \times \cdots \times \PP^{n-1} \times \PP^n \to \PP^1 \times \cdots \times \PP^{n-1}\] be the projection from the last factor.

We write $(\mathrm{pr}_n)_{\ast}$ for the induced map on tangent spaces at $\vec{x}$.  We have 
$\mathrm{pr}_n(\MR_n) \subseteq \MR_{n-1}$ since the generating equations of $\MR_{n-1}$ are among those of $\MR_n$, so in particular
\begin{equation}\label{eqn:containment}
T_{\vec{x}}\MR_n \subseteq
(\mathrm{pr}_n)_*^{-1} \big( T_{\mathrm{pr}_n(\vec{x})}\MR_{n-1} \big).
\end{equation}
The dimension of the larger space is
\begin{equation}\label{eqn:inequality}
\dim (\mathrm{pr}_n)_*^{-1} \big( T_{\mathrm{pr}_n(\vec{x})}\MR_{n-1} \big)
= \dim T_{\mathrm{pr}_n(\vec{x})}\MR_{n-1} + \dim \ker (\mathrm{pr}_n)_{\ast} \leq (n-1) + n,
\end{equation}
where the $(n-1)$ holds by induction. By Equations \eqref{eqn:containment} and \eqref{eqn:inequality}, in order to prove \eqref{eq:dim-tg-ineq} it suffices to exhibit $n-1$ additional equations on $T_{\vec{x}}(\PP^1\times \cdots \times \PP^n)$ cutting out $T_{\vec{x}}\MR_n$ as a subspace of $(\mathrm{pr}_n)_*^{-1} \big( T_{\mathrm{pr}_n(\vec{x})}\MR_{n-1} \big)$. In summary:

\begin{strategy}\label{strat:summary}
  To prove Theorem \ref{thm:main}, it suffices to exhibit $n-1$ additional equations on $T_{\vec{x}} (\PP^1\times \cdots\times \PP^n)$ cutting out $T_{\vec{x}}\MR_n$ in $(\mathrm{pr}_n)_*^{-1}(T_{\mathrm{pr}_n(\vec{x})}\MR_{n-1})$.
\end{strategy}

Letting $[y_b : y_c : y_1 : \cdots : y_n]$ be the coordinates on $\PP^n$, and $\y_b,\ldots,\y_n$ the corresponding generators of the cotangent space at $\vec{x}$, we will (mostly) find linear equations with distinct leading terms $\y_i$, which are then evidently transverse to $(\mathrm{pr}_n)_*^{-1}\big( T_{\mathrm{pr}_n(\vec{x})} \MR_{n-1} \big)$.

Combinatorially, these equations come from examining the branches of the dual tree of $C$ at the vertex $v_n$ that the leaf edge $n$ attaches to.  There is an exceptional case that behaves somewhat differently from the general case, namely when $v_n$ is trivalent ($\mathrm{deg}(v_n)=3$) with two non-leaf edges. In all other cases, we find $n-1$ additional equations that have distinct leading terms $\y_i$ (see Corollary \ref{cor:summary}).  In the exceptional case, we obtain $n-2$ equations involving the $\y_i$'s as above, plus one final equation, entirely in the coordinates of the factors $\PP^1 \times \cdots \times \PP^{n-1}$, but which we show does not vanish on $T_{\mathrm{pr}_n(\vec{x})}\MR_{n-1}$ (Proposition \ref{prop:extra} and Lemma \ref{lem:nonzero-coord}).

\begin{remark}[Geometric reason for the exceptional case] \label{rmk:why-exceptional}
The exceptional case corresponds to the failure of the forgetful morphism $\pi_n$ to be surjective on tangent spaces. A node $q \in C$ determines a tangent vector $t_q \in T_C\overline{M}_{0,S}$ by smoothing $q$ while retaining the relative positions of special points on each side, and $(\pi_n)_*(t_q) = 0$ if and only if $\pi_n$ contracts one of the components containing $q$. In the exceptional case, this gives \emph{two} independent tangent vectors that map to $0$, so $(\pi_n)_*$ is not surjective. In particular, we have a \emph{strict} inclusion
\[
(\Omega_{n-1})_* (\pi_n)_* (T_C\overline{M}_{0,S}) \subsetneq (\Omega_{n-1})_* (T_{\pi_n(C)}\overline{M}_{0,S \setminus n})
\]
of subspaces of the tangent space in $\mathbb{P}^1 \times \cdots \times \mathbb{P}^{n-1}$. Thus, there must be an additional linear condition in the coordinates of $\mathbb{P}^1 \times \cdots \times \mathbb{P}^{n-1}$, satisfied by the smaller subspace but not the larger one. This condition is then satisfied by $T_{\Omega_n(C)}\Omega_n(\overline{M}_{0,S})$ because $\Omega_{n-1} \circ \pi_n = \mathrm{pr}_n \circ \Omega_n$.
\end{remark}

We now proceed with the core of the proof.

\subsection{Tangent equations setup}

Fix $i<n$ and let $[y_b:y_c:y_1:\cdots:y_{n-1}]$ be the $\PP^n$ coordinates and $[x_b:\cdots :x_{i-1}]$ be the $\PP^{i}$ coordinates.  Then the Monin--Rana equation obtained from columns $m$ and $r$ of the $2\times (i+1)$ matrix $\mathrm{Mat}_{i,n}$ of \eqref{eq:MR} is:
\begin{equation}\label{eq:general-MR}
    x_m(y_m-y_i)y_r=x_r(y_r-y_i)y_m.
\end{equation}

Let $\vec{t}$ be a point in $\PP^1\times \cdots \times \PP^n$ satisfying all the Monin--Rana equations, with associated curve $C\in \Mbar_{0,S}$ by Theorem \ref{thm:set-theoretic}, and associated trivalent tree $T$.
Let $y_j=t_j$ be the $y$ coordinates of $\vec{t}$ and $x_j=s_j$ be the $x$ coordinates.  If $t_k$ is the leftmost nonzero among the $y$ coordinates of $\vec{t}$, we work in the chart where $y_k=1$, and similarly in all other $\PP^r$ factors.

We work locally at $\vec{t}$ as follows.
\begin{definition}\label{def:tangent}
Set local variables $\x_j=x_j-s_j$ and $\y_j=y_j-t_j$, which we view as elements of the Zariski cotangent space $\mathfrak{m}/\mathfrak{m}^2$, where $\mathfrak{m}$ is the maximal ideal of $\mathcal{O}_{\bP^1\times\cdots\times\bP^n,\vec{t}}$. Thus, all degree $2$ products of the variables $\x_j$ and $\y_j$ are zero.
\end{definition}

Then the Monin--Rana equation \eqref{eq:general-MR}, in the cotangent space, becomes:
\begin{equation}
    (\x_m+s_m)(\y_m-\y_i+t_m-t_i)(\y_r+t_r)=(\x_r+s_r)(\y_r-\y_i+t_r-t_i)(\y_m+t_m) \label{eqn:tangent-general}
\end{equation}

\begin{notation}[Chart index $k$]
In all of the following, set $k$ to be the chart index (setting $y_k=1$) in the $\PP^n$ variables, chosen as above to be the smallest $k$ such that $t_k\neq 0$.  In all of our diagrams, we will draw the path from $a$ to $k$ in the tree from left to right.   Note that, since $t_k\neq 0$, the leaf edge $n$ is directly attached to this path at vertex $v=v_n$ as shown in Figure \ref{fig:initial-tree}.
\end{notation}

\begin{figure}
    \centering
    \includegraphics{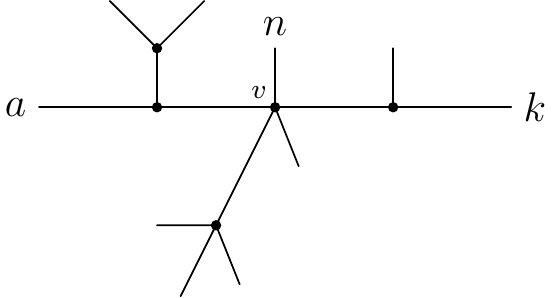}
    \caption{The tree $T$, with largest marked point $n$.  The value $k$ is the minimum element among all leaves that are not on the branch at $v$ containing $a$, and we choose $t_k=1$ as our chart in the $\PP^n$ coordinates.}
    \label{fig:initial-tree}
\end{figure}

The following lemma is not strictly necessary but is included for ease of exposition: 

\begin{lemma}
If $i<k$, then all the tangent equations coming from $\mathrm{Mat}_{i,n}$ are trivial, that is, they reduce to $0=0$.
\end{lemma}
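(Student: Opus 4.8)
The plan is to read off, from the hypothesis $i<k$, exactly which coordinate values $t_m, t_r, t_i$ vanish at $\vec{t}$, and then to observe that this forces every linearized minor to lie in $\mathfrak{m}^2$ and hence to vanish.

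First I would record the relevant numerology in the total order. The $2\times 2$ minors of $\mathrm{Mat}_{i,n}$ are indexed by pairs of columns $m,r\in\{b,c,1,\dots,i-1\}$, so every column index satisfies $m,r<i$. Together with the hypothesis $i<k$ this gives $m,r,i<k$. Since $k$ was chosen as the smallest index with $t_k\neq 0$ among the $\PP^n$ coordinates, every index strictly below $k$ has vanishing coordinate; in particular $t_m=t_r=t_i=0$. (Note that $m$ or $r$ may equal $b$ or $c$, but these too satisfy $b,c<i<k$, so their coordinates also vanish --- this is the order-bookkeeping point that makes the argument run. Also $m,r,i\neq k$, so the chart normalization $y_k=1$ plays no role here.)

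Next I would substitute these three vanishings into the linearized Monin--Rana equation \eqref{eqn:tangent-general} coming from columns $m$ and $r$. With $t_m=t_i=t_r=0$, the constant term $t_m-t_i$ inside the first difference factor and the constant term $t_r$ in the final factor both disappear, and the equation becomes
\[(\x_m+s_m)(\y_m-\y_i)\,\y_r=(\x_r+s_r)(\y_r-\y_i)\,\y_m.\]
The key observation is then that each side is divisible by a product of two linear cotangent forms: on the left, $(\y_m-\y_i)$ and $\y_r$ are each homogeneous of degree $1$ in the generators $\x_j$ and $\y_j$ of $\mathfrak{m}/\mathfrak{m}^2$, so their product lies in $\mathfrak{m}^2$, and multiplying by $(\x_m+s_m)$ keeps it in $\mathfrak{m}^2$. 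By Definition \ref{def:tangent} all degree-$2$ products vanish, so the left side is $0$; the right side vanishes by the identical argument. Hence the tangent equation reduces to $0=0$.

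Since this holds for an arbitrary pair of columns $m,r$, all the tangent equations from $\mathrm{Mat}_{i,n}$ are trivial. I do not expect a genuine obstacle here: the entire content is the simultaneous vanishing of $t_m,t_r,t_i$, which is precisely what kills the constant terms in the two difference factors and thereby pushes each side into $\mathfrak{m}^2$. The only care needed is the order bookkeeping ensuring that \emph{all three} relevant coordinates --- not merely $t_i$ --- vanish under the hypothesis $i<k$.
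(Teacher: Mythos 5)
Your proof is correct and follows essentially the same route as the paper: observe that $m,r,i<k$ forces $t_m=t_r=t_i=0$, substitute into the linearized equation \eqref{eqn:tangent-general}, and note that both sides become products of two elements of $\mathfrak{m}$ and hence vanish in $\mathfrak{m}/\mathfrak{m}^2$. Your explicit remark that each side lies in $\mathfrak{m}^2$ as a product of two degree-one forms is just a slightly more spelled-out version of the paper's appeal to Definition \ref{def:tangent}.
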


\begin{proof}
 Consider the equation with parameters $m,r<i$. By assumption, we have $m,r,i<k$ and so by the definition of the chart we have $t_m=t_{r}=t_i=0$.  Then the $m,r$ tangent equation (\ref{eqn:tangent-general}) becomes $$(\x_m+s_m)(\y_m-\y_i)\y_{r}=(
 \x_k+s_{r})(\y_{r}-\y_i)\y_m.$$ 
 By the relations in Definition \ref{def:tangent}, the equation above is just $0 = 0$.
\end{proof}

Thus, we will hereafter assume $i \ge k$. We consider the cases $i=k$ and $i>k$ separately in the next two lemmas.

\begin{lemma}\label{lem:i=k}
  If $i=k$, then, renaming $m$ as $m_1$ and $r$ as $m_2$, the tangent equation \eqref{eqn:tangent-general} reduces to $$s_{m_1}\y_{m_2}=s_{m_2}\y_{m_1}.$$
\end{lemma}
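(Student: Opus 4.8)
The plan is to substitute the coordinate values forced by the hypothesis $i = k$ directly into the linearized equation \eqref{eqn:tangent-general} and then expand, discarding every degree-$2$ term as prescribed by Definition \ref{def:tangent}.

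First I would record which coordinates vanish and which equal $1$. Since $k$ is chosen as the smallest index with $t_k \neq 0$ and we normalize the chart so that $y_k = 1$, the case $i = k$ forces $t_i = t_k = 1$. The columns $m$ and $r$ of $\mathrm{Mat}_{i,n}$ range over $\{b, c, 1, \ldots, i-1\}$, all of which are strictly less than $i = k$, so by minimality of $k$ we have $t_m = t_r = 0$. Substituting $t_m = t_r = 0$ and $t_i = 1$ into \eqref{eqn:tangent-general} turns it into
$$(\x_m + s_m)(\y_m - \y_i - 1)\y_r = (\x_r + s_r)(\y_r - \y_i - 1)\y_m.$$

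Next I would expand each side modulo the relation of Definition \ref{def:tangent} that every degree-$2$ product of the local variables $\x_\bullet, \y_\bullet$ vanishes. On the left, $(\y_m - \y_i - 1)\y_r = \y_m\y_r - \y_i\y_r - \y_r$ collapses to $-\y_r$, and multiplying by $(\x_m + s_m)$ leaves only $-s_m\y_r$, since the term $\x_m\y_r$ is again degree $2$. The right-hand side collapses to $-s_r\y_m$ by the identical computation with the roles of $m$ and $r$ interchanged. Equating the two sides and clearing the common sign gives $s_m\y_r = s_r\y_m$, which is exactly the asserted relation after renaming $m \mapsto m_1$ and $r \mapsto m_2$.

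I do not expect a genuine obstacle here: the whole content is the bookkeeping of which $t$-values are $0$ and which equal $1$, and this is pinned down by the minimality of the chart index $k$. The only point meriting a second look is confirming that no linear term is inadvertently dropped, so that the surviving $-s_m\y_r$ and $-s_r\y_m$ really are the complete linearizations of the two sides. This is immediate, since every discarded monomial is a product of two of the local variables $\x_\bullet, \y_\bullet$ and hence lies in $\mathfrak{m}^2$.
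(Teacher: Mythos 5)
Your proof is correct and follows essentially the same route as the paper: substitute $t_i = 1$ and $t_{m} = t_{r} = 0$ (justified by the minimality of the chart index $k$) into \eqref{eqn:tangent-general}, then discard all degree-$2$ products of the local variables to arrive at $s_{m_1}\y_{m_2} = s_{m_2}\y_{m_1}$. No issues.
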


\begin{proof}
Since $i$ is the chart we have $t_i=1$, and since $m_1,m_2<i$ we have $t_{m_1}=0$ and $t_{m_2}=0$. Substituting $m=m_1$ and $r=m_2$ and the above $t$ values into (\ref{eqn:tangent-general}), we have
$$(\x_{m_1}+s_{m_1})(\y_{m_1}-\y_{i}-1)\y_{m_2}=(\x_{m_2}+s_{m_2})(\y_{m_2}-\y_i-1)\y_{m_1}.$$ 
Expanding and cancelling quadratic terms gives $s_{m_1}\y_{m_2}=s_{m_2}\y_{m_1}$.
\end{proof}

\begin{remark}
We renamed $m$ and $r$ as $m_1$ and $m_2$ in the above Lemma statement as a notational convenience for later applications of Lemma \ref{lem:i=k}.
\end{remark}

\begin{lemma}\label{lem:i>k}
  If $i>k$, then the tangent equation \eqref{eqn:tangent-general} for $r=k$ is \begin{equation*}(s_m-s_k+s_kt_i)\y_m+(t_ms_k-s_m)\y_i=t_m(1-t_i)\x_k+(t_i-t_m)\x_m. \end{equation*}
\end{lemma}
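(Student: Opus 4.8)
The plan is to treat this as a direct linearization of the general tangent equation \eqref{eqn:tangent-general}, specialized to the column index $r = k$ and simplified using the chart conventions. First I would substitute $r = k$, recalling two facts from the chart setup: that $t_k = 1$ (since we chose the chart $y_k = 1$), and crucially that $\y_k = 0$, because $y_k$ is held fixed at $1$ in this chart and hence is not a local variable. These two substitutions collapse the factor $(\y_k + t_k)$ on the left-hand side to the constant $1$, and collapse the factor $(\y_k - \y_i + t_k - t_i)$ on the right-hand side to $(1 - t_i - \y_i)$.

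Next I would expand both sides and discard every degree-$2$ product of the local variables $\x_j, \y_j$, which vanish by Definition \ref{def:tangent}. The left-hand side becomes $(\x_m + s_m)(\y_m - \y_i + t_m - t_i)$, contributing constant term $s_m(t_m - t_i)$ and linear part $\x_m(t_m - t_i) + s_m\y_m - s_m\y_i$. The right-hand side becomes $(\x_k + s_k)(1 - t_i - \y_i)(\y_m + t_m)$, contributing constant term $s_k(1-t_i)t_m$ and linear part $t_m(1-t_i)\x_k + s_k(1-t_i)\y_m - s_k t_m \y_i$. Here it is essential that $\y_k = 0$: otherwise a spurious linear term $s_k t_m \y_k$ would survive on the right, and it does not appear in the target identity.

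The constant terms then cancel, since they are precisely the two sides of the original Monin--Rana equation \eqref{eq:general-MR} evaluated at the base point $\vec{t}$ with $r = k$ (so $t_k = 1$), which holds by the standing hypothesis that $\vec{t}$ satisfies all the Monin--Rana equations. What remains is the equality of the linear parts. Moving the $\y$-terms to one side and the $\x$-terms to the other and collecting coefficients yields $(s_m - s_k + s_k t_i)$ on $\y_m$, $(t_m s_k - s_m)$ on $\y_i$, $t_m(1 - t_i)$ on $\x_k$, and $(t_i - t_m)$ on $\x_m$, which is exactly the claimed identity.

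Since the entire argument is a bounded symbolic expansion, there is no genuine obstacle here; the lemma is computational. The only two points that require care are remembering that $\y_k = 0$ in the chosen chart (so that the $r = k$ specialization truly simplifies and no extra $\y_k$ term intrudes), and checking the constant-term cancellation, which is exactly where the hypothesis that $\vec{t}$ lies on $\MR_n$ is used. I would present the expansion compactly rather than writing out all intermediate monomials.
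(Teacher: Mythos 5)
Your proposal is correct and follows essentially the same route as the paper: substitute $r=k$ using $t_k=1$ and $\y_k=0$, expand, discard quadratic terms, cancel the constant terms because the base point satisfies the Monin--Rana equations, and rearrange the linear parts. All of your intermediate coefficients check out, so there is nothing to add.
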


\begin{proof}
Since $k$ is the chart, we have $\y_k=0$ and $t_k=1$, so after changing all $r$'s to $k$'s the tangent equation \eqref{eqn:tangent-general} becomes
$$(\x_m+s_m)(\y_m-\y_i+t_m-t_i)=(\x_k+s_k)(-\y_i+1-t_i)(\y_m+t_m).$$
Expanding this out, the constant terms vanish because the $s$ and $t$ values themselves satisfy the Monin--Rana equations. The linear term is then
$$(t_m-t_i)\x_m+s_m\y_m-s_m\y_i=t_m(1-t_i)\x_k-s_kt_m\y_i+s_k(1-t_i)\y_m.$$ Putting all the $y$'s on the left and $x$'s on the right yields the desired equation.
\end{proof}

\subsection{Equations along the \texorpdfstring{$a$}{`a'} branch}

We now use the general equations from Lemmas \ref{lem:i=k} and \ref{lem:i>k} to examine the tangent equations we obtain using indices of leaves on the branch at $v_n$ containing the leaf $a$.

Throughout all sections below, we write $$T|_i=\pi_{i+1}\circ \pi_{i+2}\circ \cdots \circ \pi_{n}(T)$$ as in Definition \ref{def:insertions}.
In other words, $T|_i$ is the tree formed by forgetting all leaves larger than $i$ in $T$, then stabilizing.

\begin{definition}\label{def:sub-branch}
In the tree $T$, let $v=v_n$ be the internal vertex that is adjacent to leaf $n$, and consider the path $P$ from $v$ to $a$.  Given another leaf $i$, let $e$ be the first edge on the path from $i$ to $a$ that is incident to some vertex $x$ of $P$. If $x$ is strictly between $v$ and $a$ on $P$, we say $i$ is on a \textbf{sub-branch off the $a$ branch} that \textbf{attaches at $x$}, and its entire \textbf{sub-branch} consists of all leaves whose path to $a$ also contains edge $e$.  The \textbf{distance} between $v$ and the sub-branch is the number of edges on the path from $v$ to $x$.
\end{definition}

The following picture will be useful in all of the proofs in this subsection.

\begin{center}
    \includegraphics{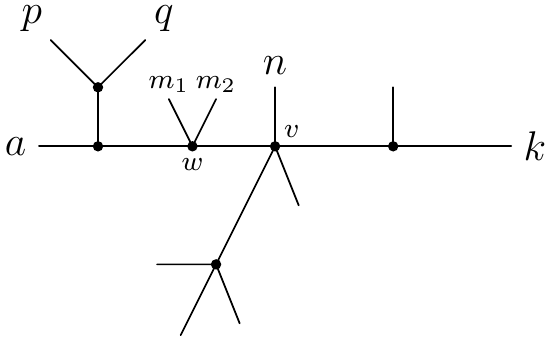}
\end{center}

In particular, we will always let $v=v_n$ be the internal vertex that is adjacent to leaf $n$, and note that $v$ lies on the path from $a$ to $k$ in $T$.

\begin{prop}[Two away from $v$ on $a$ branch]\label{prop:two-away-a} 
 In $T$, suppose $p$ is the minimal element of a sub-branch $B$ off the `$a$' branch that is distance at least $2$ from $v$.  Then we have $\y_p=0$.
\end{prop}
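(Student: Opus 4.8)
The plan is to show that $\y_p = 0$ as a tangent equation by exhibiting a suitable Monin--Rana tangent equation (from Lemma \ref{lem:i=k} or Lemma \ref{lem:i>k}) that forces this, using the combinatorial structure of the tree near $v = v_n$. The key geometric input is the hypothesis: $p$ is the minimal element of a sub-branch $B$ off the $a$ branch, attaching at a vertex $x$ that is distance at least $2$ from $v$. Since $x$ lies strictly between $v$ and $a$ on the path $P$, and $B$ hangs off $x$, the leaf $a$ is separated from both $p$ and the vertex $v$ by the portion of $P$ between $x$ and $a$. I expect the relevant coordinate facts to be: because $p$ is on the $a$-branch at $v$ (the same branch as $a$), Proposition \ref{prop:PsiCoordinates}(2) gives $t_p = 0$; and because $B$ attaches at distance $\ge 2$, there is some intermediate leaf on $P$ strictly between $x$ and $v$ whose index records the separation.

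\textbf{Key steps.} First I would identify the relevant indices. Since $p$ is the minimal label of $B$, forgetting all leaves larger than $p$ keeps $p$ adjacent to $x$ in $T|_p$. I would locate a leaf index $i$ with $p < i$ corresponding to a marked point lying on $P$ on the $v$-side of $x$ (this exists because the distance from $v$ to $x$ is at least $2$, so there is genuine tree structure between them, and $n$ itself lies beyond $v$). The aim is to apply Lemma \ref{lem:i>k} (or Lemma \ref{lem:i=k} if $i = k$) with this $i$ and with $m = p$, reading off the tangent equation and using the vanishing/non-vanishing of the $t$ and $s$ coordinates dictated by the tree. Concretely, I would compute $t_p$, $t_i$, $s_p$, $s_k$ from the separation data: $t_p = 0$ since $p$ is on the same branch as $a$ at $v_n$ (Proposition \ref{prop:PsiCoordinates}(2)); and the coefficient of $\y_p$ in the resulting linear tangent equation should turn out to be a nonzero scalar while the coefficients of the $\x$ variables and of $\y_i$ vanish, leaving precisely $\y_p = 0$.

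\textbf{The main obstacle.} The delicate part will be verifying that the $\x$-terms on the right-hand side of the Lemma \ref{lem:i>k} equation vanish, i.e. checking that the coefficients $t_m(1-t_i)$ and $(t_i - t_m)$ paired with $\x_k$ and $\x_m$ either vanish or are cancelled, so that the equation is genuinely $\y_p = 0$ with no residual $\x$ dependence. This requires care: with $m = p$ and $t_p = 0$, the coefficient of $\x_k$ becomes $t_p(1 - t_i) = 0$ automatically, but the coefficient of $\x_p$ is $(t_i - t_p) = t_i$, which need not vanish in general. I therefore expect that the correct choice of $i$ must force $t_i = 0$ as well --- meaning $i$ should itself be chosen on the $a$-side, i.e. a leaf such that in $T|_i$ the leaf $i$ still does not separate $p$ from $a$, giving $t_i = 0$. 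I would then recheck that with $t_p = t_i = 0$ the Lemma \ref{lem:i>k} equation collapses to $(s_p - s_k)\y_p - s_p \y_i = 0$, and argue that the distance-$\ge 2$ hypothesis forces $s_p = s_k$ together with an analogous equation eliminating $\y_i$, so that the net effect is $\y_p = 0$. Pinning down exactly which auxiliary index and which minor isolates $\y_p$ cleanly --- rather than merely relating it to other $\y$ variables --- is where the real work lies, and I would rely on the minimality of $p$ in $B$ and the precise separation pattern in $T|_i$ to close this.
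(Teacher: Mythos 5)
Your overall plan is the right one (it is the paper's: pick an auxiliary leaf on a sub-branch strictly between $B$ and $v$, feed it together with $p$ into one Monin--Rana tangent equation, and read off $\y_p=0$ from the vanishing pattern of the $s$ and $t$ coordinates), and you correctly identify $t_p=0$ and the need for $t_i=0$ so that the $\x$-terms drop out. But the step you flag as "where the real work lies" is exactly where your sketch goes wrong. In the configuration you set up ($m=p$, $i>p$ an intermediate leaf with $i>k$, minimal in its sub-branch), the correct coordinate facts are $s_p=0$ (in $T|_i$ the leaf $p$ is still on the $a$-branch at $v_i$, since $i$ attaches between $B$ and $v$) and $s_k\neq 0$ (since $i$ separates $k$ from $a$), so $(s_p-s_k)\y_p-s_p\y_i=0$ collapses immediately to $-s_k\y_p=0$, i.e.\ $\y_p=0$. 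Your proposed deduction $s_p=s_k$ is false here, and if it were true it would annihilate the coefficient of $\y_p$ and yield only $\y_i=0$, which does not prove the claim; no "analogous equation eliminating $\y_i$" rescues this.

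There are also two sub-cases your proposal does not cover. First, if $p>k$ and every intermediate sub-branch has minimal element smaller than $p$, there is no admissible $i>p$, and you must instead take $i=p$ and let $m$ be an intermediate leaf; this is the configuration where $s_m=s_k\neq 0$ genuinely holds (both $m$ and $k$ lie on the same branch at $v_p$ in $T|_p$, because $p$ is minimal in $B$), and the equation becomes $-s_m\y_p=0$. You appear to have transplanted the $s=s_k$ identity from this case into the other one. Second, if $p<k$ and all intermediate leaves are also $<k$, Lemma \ref{lem:i>k} is unavailable (it needs $i>k$) and you must use Lemma \ref{lem:i=k} with $i=k$, comparing $p$ against an intermediate leaf $m_0<k$ chosen with attachment point closest to $v$, where $s_{m_0}\neq 0$ and $s_p=0$ give $\y_p=0$; your parenthetical "or Lemma \ref{lem:i=k} if $i=k$" does not identify this case or the choice of $m_0$. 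The paper's proof is precisely the three-case analysis above.
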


\begin{proof}
We consider three cases.

\textbf{Case 1.} Suppose $p<k$ and that there is another leaf $m_0<k$ on a sub-branch attached at a vertex $w$ strictly between $v$ and $B$.  Choose such an $m_0$ such that $w$ is as close to $v$ as possible.  Setting $i=k$ and applying Lemma \ref{lem:i=k} with $m_1=p$ and $m_2=m_0$, we have $$s_{p}\y_{m_0}=s_{m_0}\y_{p}.$$  Note that since $k$ is minimal among all non-$a$ branches at $v$, in the tree $T|_i$ we have that $i=k$ is adjacent to vertex $w$, and hence $s_{m_0}\neq 0$ while $s_p=0$.  Thus, the equation above simplifies to $\y_p=0$.  

\textbf{Case 2.} Suppose $p<k$ and that we are not in Case $1$.  Since $B$ is at least two steps away from $v$, there is another leaf $i>k$ on a sub-branch between $B$ and $v$; let $i$ be the smallest such element, so that it is minimal in its sub-branch.   Now we consider Lemma \ref{lem:i>k} with $i$ and $m=p$, and notice that, since $m$ is on the $a$ branch at $v_i$ in $T|_i$ we have $s_m=0$.  Furthermore, we have $t_m=t_i=0$ since $i,m$ are both on the $a$ branch at $v_n$ in $T$.  Therefore, Lemma \ref{lem:i>k} simplifies to $-s_k\y_m=0$.  Since $i$ is the minimum of its sub-branch, it separates $a$ from $k$ in $T|_i$, so $s_k\neq 0$. Therefore, the equation again simplifies to $\y_m=0$ (and since we set $m=p$ we have $\y_p=0$).

\textbf{Case 3.} Suppose $p>k$ and there is another sub-branch between $B$ and $v$ whose minimal element is greater than $k$.  Then the computation in Case 2 goes through again.

Otherwise,  we set $i=p$, and we have some $m<i$ on a sub-branch between $B$ and $v$.  We wish to show $\y_{i}=0$.  Now we again apply Lemma \ref{lem:i>k}, where in this case we have $t_{m}=t_{i}=0$, and note that $s_{m}=s_k\neq 0$ because we assumed in the Proposition statement that $i=p$ is minimal in its sub-branch.  Thus, the equation simplifies to $s_{m}\y_{i}=0,$ and we are done.
\end{proof}

\begin{prop}[Equality on sub-branches]\label{prop:equal-sub-a}
 In the tree $T$, suppose $p$ and $q$ are on the same sub-branch off the `$a$' branch.  Then we have the tangent equation $\y_p=\y_q$.
\end{prop}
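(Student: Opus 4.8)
The plan is to produce the tangent equation $\y_p = \y_q$ by selecting an appropriate Monin--Rana minor involving the indices $p$ and $q$, together with a convenient third index, and then specializing via Lemma \ref{lem:i=k} or Lemma \ref{lem:i>k}. The key structural fact to exploit is that $p$ and $q$ lie on the \emph{same} sub-branch off the $a$ branch at $v$; this means that for any leaf $i$ large enough that the sub-branch has formed in $T|_i$, the vertex $v_i$ does \emph{not} separate $p$ from $q$, so by Proposition \ref{prop:PsiCoordinates}(1) we get $s_p = s_q$ in the relevant $\PP^i$ coordinates. This equality of the $s$-values is what will let the linear equation collapse to $\y_p = \y_q$.

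First I would fix notation: let $v = v_n$ and let $w$ be the vertex on the $a$ branch where the common sub-branch of $p$ and $q$ attaches, at distance $\ge 1$ from $v$. I would then split into cases according to whether the relevant chart index $k$ relates to $p, q$, mirroring the structure of Proposition \ref{prop:two-away-a}. In the principal case I would choose $i$ to be an index for which $v_i$ separates the sub-branch containing $\{p,q\}$ from $a$ but does not separate $p$ from $q$ — for instance $i = k$ when $p, q < k$, applying Lemma \ref{lem:i=k} with $m_1 = p$, $m_2 = q$ to get $s_p \y_q = s_q \y_p$; since $s_p = s_q \neq 0$ (as $p,q$ share a branch not equal to the $a$ branch at $v_k$, so their common coordinate is nonzero), this immediately gives $\y_p = \y_q$. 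When $p$ or $q$ exceeds $k$, I would instead take $i = \max(p,q)$ (or another leaf minimal in a separating sub-branch) and apply Lemma \ref{lem:i>k}, again using $t_p = t_q$ and $s_p = s_q$ to force the cross terms to cancel and leave $\y_p = \y_q$.

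The subtle bookkeeping — and the step I expect to be the main obstacle — is verifying in each case that the coordinate values $s$ and $t$ take exactly the forms that make the linearized equation collapse cleanly, and in particular confirming the relevant $s_p = s_q$ is \emph{nonzero} rather than both being zero. The sign of $s_p = s_q$ depends on whether $v_i$ separates $\{p,q\}$ from $a$ in $T|_i$ (giving nonzero value, via Proposition \ref{prop:PsiCoordinates}(2)) versus from the chart leaf, so I must choose $i$ so that $p,q$ sit on a branch that is neither the $a$ branch nor the chart ($k$-containing) branch at $v_i$. Because $p,q$ are on a sub-branch strictly off the $a$ branch, such an $i$ exists: once $i$ is large enough to realize the sub-branch, the attaching vertex $w$ sits strictly between $a$ and the sub-branch, so $p$ and $q$ are separated from $a$ but not from each other. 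The remaining work is then purely the same expand-and-cancel computation already carried out in Lemmas \ref{lem:i=k} and \ref{lem:i>k}, so no genuinely new mechanism is needed beyond careful case analysis on the position of $k$ relative to $p$ and $q$.
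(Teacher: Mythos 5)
Your overall framework---reduce everything to the linearized minors of Lemmas \ref{lem:i=k} and \ref{lem:i>k} and split on the position of $p,q$ relative to the chart index $k$---is the same as the paper's, and your main case ($p,q<k$ with $s_p=s_q\neq 0$, so that Lemma \ref{lem:i=k} collapses to $\y_p=\y_q$) is exactly the paper's Case 1. The gap is your claim that one can always \emph{choose} an admissible index $i$ for which $s_p=s_q\neq 0$. By Proposition \ref{prop:PsiCoordinates}(2), $x_p^{(i)}\neq 0$ iff the \emph{leaf} $i$ separates $p$ from $a$ in $T|_i$; the fact that the attaching vertex $w$ of the sub-branch separates $\{p,q\}$ from $a$ is irrelevant, and there need not exist any admissible $i$ (with $p,q<i<n$) whose leaf attaches between $w$ and $a$. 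Concretely, take $n=4$ and the trivalent tree in which leaves $3,4$ hang off $v=v_4$, leaf $c$ hangs off the adjacent vertex $u_1$ of the $a$ branch, the sub-branch $\{1,2\}$ hangs off the next vertex $u_2$, and $a,b$ sit at the far end. Here $k=3$, and the only admissible index for comparing $\y_1$ and $\y_2$ is $i=3$; but in $T|_3$ the leaf $3$ attaches at $u_1$, so $x_1^{(3)}=x_2^{(3)}=0$ and your chosen minor reads $0=0$.

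In that situation a genuinely different mechanism is required: one compares each of $p$ and $q$ not with the other but with a \emph{third} leaf $m<k$ on a sub-branch attached closer to $v$ (in the example, $m=c$), for which $s_m\neq 0$ while $s_p=s_q=0$; Lemma \ref{lem:i=k} then yields $\y_p=0$ and $\y_q=0$ separately, hence $\y_p=\y_q$. You correctly flag ``$s_p=s_q$ nonzero versus both zero'' as the delicate point, but you resolve it with an existence claim that is false rather than with this third-leaf argument, so the proof as sketched does not go through. A smaller instance of the same issue appears in your $p>k$ case: with $i=\max(p,q)$ the relevant coefficient is $s_{\min(p,q)}=x_{\min(p,q)}^{(i)}$ (note there is no ``$s_p$'' when $p=i$), and this need not be nonzero for an arbitrary pair on the sub-branch. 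The paper repairs this by induction along the elements of the sub-branch: at each step the new leaf $i$ is paired with some $m$ in the sub-branch that $i$ \emph{does} separate from $a$ (such an $m$ exists because $T|_i$ is stable at $v_i$), and the resulting equalities are chained together. Your parenthetical ``or another leaf minimal in a separating sub-branch'' gestures at this but does not supply the induction.
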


\begin{proof}
In the first two cases below, we suppose both $p$ and $q$ are less than $k$; in this case we rename them to $m_1$ and $m_2$ and set $i=k$.  

  \textbf{Case 1.} Suppose $m_1,m_2$ are on a sub-branch $B$ off the $a$ branch attaching at a vertex $w$, such that there is no $m<k$ on any other sub-branch attached between $w$ and $v$.  Then since $i=k$ is minimal along the $k$ branch from $w$, in $T|_i$ the leaf $i=k$ separates $B$ from $a$.  Hence, in this case we have $s_{m_1}=s_{m_2}\neq 0$, and so the equation $$s_{m_1}\y_{m_2}=s_{m_2}\y_{m_1}$$ from Lemma \ref{lem:i=k} reduces to $\y_{m_1}=\y_{m_2}$ as desired.

  \textbf{Case 2.}  Suppose instead that there is some $m<k$ branching off the $a$ branch in between the $m_1,m_2$ branch and $v$.  Choose such an $m$ that is as close to $v$ as possible.  Then since $i=k$ attaches to the base of $m$'s sub-branch in $T|_i$, we have $s_{m}\neq 0$, and we also have $s_{m_1}=s_{m_2}=0$.  By Lemma \ref{lem:i=k} applied to $m_1$ and $m$, it follows that $\y_{m_1}=0$.  Similarly, applying Lemma \ref{lem:i=k} to $m_2$ and $m$, it follows that $\y_{m_2}=0$.  Thus, $\y_{m_1}=\y_{m_2}$ (and both equal $0$ in this case).
  
  \textbf{Case 3.}  Now suppose $p>k$ (and assume without loss of generality that $p>q$) and let $B$ be the sub-branch containing $p,q$.  We show by induction on $p$ that $\y_p=\y_q$.  For the base case, if $p$ is minimal in $B$ and $q=p$ then there is nothing to show.
  
  For the induction step, suppose we already know that $\y_{q_1}=\cdots =\y_{q_r}$ where $q_1,\ldots,q_r$ are the elements of branch $B$ less than $p$.  Set $i=p$, and in the tree $T|_i$, there must be at least one leaf $m$ from sub-branch $B$ that $i$ separates from $a$.  Let $m$ be such a leaf, and notice that $m$ is among $q_1,\ldots,q_r$.  Then we have $s_m\neq 0$, $s_k=0$, and $t_i=t_m=0$.  Thus, Lemma \ref{lem:i>k} becomes $$s_m\y_m-s_m\y_i=0,$$ and so $\y_m=\y_i$.  Since we substituted $i=p$, we see that $\y_p$ is equal to one of $\y_{q_1},\ldots,\y_{q_r}$, and so it is equal to all of them, and therefore is equal to $\y_q$.
\end{proof}

\begin{corollary}
If $p$ is any point on a sub-branch off the `$a$' branch that is distance at least $2$ from $v$, then $\y_p=0$.
\end{corollary}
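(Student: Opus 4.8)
The plan is to deduce this corollary directly from the two preceding propositions, since together they already isolate exactly the two facts needed: that the minimal label of such a sub-branch has vanishing tangent coordinate, and that all labels on a common sub-branch share the same tangent coordinate. First I would fix the sub-branch $B$ off the `$a$' branch on which $p$ lies, noting that $B$ is at distance at least $2$ from $v$ by hypothesis, and let $p_0$ denote the minimal label appearing on $B$.

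Next I would apply Proposition \ref{prop:two-away-a} to the sub-branch $B$. Since $p_0$ is by construction the minimal element of $B$, and $B$ is distance at least $2$ from $v$, that proposition gives $\y_{p_0}=0$ immediately.

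Finally I would invoke Proposition \ref{prop:equal-sub-a}: because $p$ and $p_0$ both lie on the same sub-branch $B$ off the `$a$' branch, that proposition yields the tangent equation $\y_p = \y_{p_0}$. Combining the two displayed facts gives $\y_p = \y_{p_0} = 0$, as desired. I do not anticipate any genuine obstacle here, as the content is entirely packaged in the two propositions; the only point requiring care is the bookkeeping observation that $p_0$ lies on the same sub-branch $B$ (so that Proposition \ref{prop:equal-sub-a} applies) and that $B$ inherits the distance-at-least-$2$ hypothesis (so that Proposition \ref{prop:two-away-a} applies), both of which are immediate from the setup in Definition \ref{def:sub-branch}.
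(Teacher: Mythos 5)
Your proposal is correct and matches the paper's proof, which likewise just combines Propositions \ref{prop:two-away-a} and \ref{prop:equal-sub-a}; you have merely spelled out the bookkeeping (introducing the minimal element $p_0$ of the sub-branch) that the paper leaves implicit.
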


\begin{proof}
  This follows by combining Propositions \ref{prop:two-away-a} and \ref{prop:equal-sub-a}. 
\end{proof}

We finally compare vertices along sub-branches that attach to the same point on the $a$ branch, which is possible since $T$ is not necessarily trivalent.

\begin{prop}[Comparison of sub-branches]\label{prop:compare-subs-a}
 Let $w$ be the vertex closest to $v$ along the path from $v$ to $a$, and suppose there are two sub-branches $B_1,B_2$ off the `$a$' branch that attach at $w$.  Let $m_1,m_2$ be the minimal elements of $B_1,B_2$.  Then there are nonzero constants $c_1$ and $c_2$ such that $c_1\y_{m_1}=c_2\y_{m_2}$.
\end{prop}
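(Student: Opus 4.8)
The plan is to deduce the relation from a single Monin--Rana tangent equation, using the templates in Lemmas \ref{lem:i=k} and \ref{lem:i>k}, after splitting into two cases according to the size of $m_1,m_2$ relative to the chart index $k$. The first thing I would record is that both $m_1$ and $m_2$ lie on the branch at $v=v_n$ containing $a$: their sub-branches $B_1,B_2$ hang off the vertex $w$, which lies strictly on the $a$-side of $v$, so $t_{m_1}=t_{m_2}=0$ by Proposition \ref{prop:PsiCoordinates}(2). I would also note that $m_1,m_2\neq k$, since $k$ is the minimal label among leaves \emph{not} on the $a$-branch at $v$, whereas $m_1,m_2$ are on that branch. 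Hence each of $m_1,m_2$ is either strictly less than or strictly greater than $k$, and I treat these two regimes separately.

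In the case where both $m_1,m_2<k$, I would take $i=k$ and apply Lemma \ref{lem:i=k} to the columns $m_1,m_2$, obtaining $s_{m_1}\y_{m_2}=s_{m_2}\y_{m_1}$; this is the desired relation provided $s_{m_1},s_{m_2}\neq 0$. To verify this I first identify the vertex $v_k$ in $T|_k$: after forgetting all leaves larger than $k$ (in particular $n$), the only surviving neighbors of $v$ are the edge toward $w$ and the single surviving leaf $k$, since every non-$a$ branch at $v$ other than the $k$-branch has minimal label $>k$ and is entirely forgotten. Thus $v$ becomes bivalent, contracts, and $k$ attaches at $w$, giving $v_k=w$. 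Because $B_1,B_2$ hang off $w$, deleting $w$ separates each of $m_1,m_2$ from $a$, so $s_{m_1},s_{m_2}\neq 0$ by Proposition \ref{prop:PsiCoordinates}(2), and I may take $c_1=s_{m_2}$, $c_2=s_{m_1}$.

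In the complementary case, where at least one of $m_1,m_2$ exceeds $k$, I set $i=\max(m_1,m_2)>k$ and $m=\min(m_1,m_2)$, and apply Lemma \ref{lem:i>k} with $r=k$ (valid since $m,k<i$). Using $t_m=t_i=0$, the equation collapses to $(s_m-s_k)\y_m=s_m\y_i$, which is again the desired relation once I check $s_m\neq 0$ and $s_m-s_k\neq 0$. Here $i=\max(m_1,m_2)$ is minimal in its own sub-branch, so it attaches at $w$ after forgetting the larger leaves, giving $v_i=w$; both $m$ and $k$ survive in $T|_i$ since they are $<i$. Deleting $w$ from $T|_i$ places $m$ on its sub-branch, $a$ on the $a$-side, and $k$ on the $v$-side (the $k$-branch), so $m$ is separated from both $a$ and $k$ at $w$. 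Proposition \ref{prop:PsiCoordinates}(2) then gives $s_m\neq 0$ and Proposition \ref{prop:PsiCoordinates}(1) gives $s_m\neq s_k$, so both coefficients are nonzero. This argument handles $\min(m_1,m_2)>k$ as well, as the relevant separations at $w$ are unchanged.

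I expect the main obstacle to be the tree bookkeeping that establishes non-vanishing of the coefficients, namely correctly locating the attaching vertex ($v_k$ in the first case, $v_i$ in the second) in the stabilized tree $T|_i$. The crucial subtlety is that forgetting the leaves $>k$, and especially $n$, contracts $v$ and slides the relevant leaf down to $w$; this is precisely what forces $w$ rather than $v$ to be the separating vertex and hence makes the coordinates nonzero. Once the identifications $v_k=w$ and $v_i=w$ are in hand, the non-vanishing conclusions follow mechanically from Proposition \ref{prop:PsiCoordinates}, and the proposition is proved.
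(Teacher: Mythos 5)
Your proof is correct and follows essentially the same route as the paper's: the same case split on whether both minima are below the chart index $k$, the same application of Lemma \ref{lem:i=k} (resp.\ Lemma \ref{lem:i>k} with $r=k$) yielding $s_{m_1}\y_{m_2}=s_{m_2}\y_{m_1}$ (resp.\ $(s_m-s_k)\y_m=s_m\y_i$), and the same non-vanishing checks via Proposition \ref{prop:PsiCoordinates}. Your explicit identification of the attaching vertex $v_k=w$ (resp.\ $v_i=w$) in the stabilized tree just spells out the tree bookkeeping the paper compresses into ``since $w$ is as close as possible to $v$.''
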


\begin{proof}
\textbf{Case 1.} First suppose $m_1,m_2<k$, and set $i=k$.  Then since $w$ is as close as possible to $v$, in the tree $T|_i$ we have that $k$ separates $m_1$ and $m_2$ from $a$, and so $s_{m_1},s_{m_2}\neq 0$.  This means that the equation from Lemma \ref{lem:i=k}, $$s_{m_1}\y_{m_2}=s_{m_2}\y_{m_1},$$ is nonvanishing with nonzero constants on each side, as desired.

\textbf{Case 2.}  Now suppose one of the two minima is greater than $k$, and rename the minimum elements $i,m$ with $i>m$.  Then in Lemma \ref{lem:i>k} we have $t_i=t_m=0$ and the equation becomes $$(s_m-s_k)\y_m=s_m\y_i.$$  Since $m$ is on a different branch at $v_i$ than $k$ in $T|_i$, we have $s_m\neq s_k$ so the left hand side coefficient is nonvanishing, and we also see that $s_m\neq 0$ since $i$ separates $m$ from $a$ in the forgotten tree.

This completes the proof.
\end{proof}

We summarize and enumerate the equations along the $a$ branch as follows.  For brevity, we make the following definition.

\begin{definition}
A sub-branch off the $a$ branch is \textbf{nonzero} if it attaches to the vertex $w$ adjacent to $v$, and it is \textbf{zero} otherwise.
\end{definition}

\begin{corollary}[Equations on the $a$ branch]\label{rmk:a-branch}
  If there are $L_0\ge 2$ leaves on the `$a$' branch (including $a$), the following $L_0-2$ equations are independent tangent equations satisfied by the $\y$ variables:
  \begin{itemize}
      \item $\y_p=0$ whenever $p$ is on a zero sub-branch of the `$a$' branch,
      \item $\y_{r}=c_r\y_{m_0}$ for some $c_r\neq 0$, where $r,m_0$ are on a nonzero sub-branch and $m_0$ is the minimal leaf on all nonzero sub-branches. 
  \end{itemize}
  These eliminate the $\y$ variables corresponding to all $L_0$ leaves on the `$a$' branch except for $a$ and $m_0$.
\end{corollary}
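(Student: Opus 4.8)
The plan is to assemble the equations promised in the two bullets directly from the preceding Propositions \ref{prop:two-away-a}, \ref{prop:equal-sub-a}, and \ref{prop:compare-subs-a}, then verify by a counting argument that there are exactly $L_0-2$ of them, and finally deduce their linear independence from the observation that each has a distinct leading variable $\y_i$. Since $a$ itself carries no coordinate in the $\PP^n$ chart (the coordinates are indexed by $b,c,1,\ldots,n-1$), the $L_0$ leaves on the `$a$' branch contribute only $L_0-1$ of the $\y$ variables, and the goal is to eliminate all but $\y_{m_0}$.

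First I would set up the bookkeeping. Each leaf on the `$a$' branch other than $a$ lies on exactly one sub-branch, so I partition them: let $Z$ be the number lying on zero sub-branches (distance $\ge 2$ from $v$) and $N$ the number lying on nonzero sub-branches (those attaching at the vertex $w$ adjacent to $v$). Then $L_0 = 1 + Z + N$. For the zero sub-branches, the Corollary combining Propositions \ref{prop:two-away-a} and \ref{prop:equal-sub-a} gives $\y_p=0$ for every leaf $p$ on such a sub-branch; this is the first bullet and contributes $Z$ equations, each with leading term $\y_p$.

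For the nonzero sub-branches, I would chain the two relevant propositions. Proposition \ref{prop:equal-sub-a} equates $\y_r$ with $\y_{m^{(j)}}$, where $m^{(j)}$ is the minimal element of the sub-branch $B_j$ containing $r$. Since every nonzero sub-branch attaches at the single vertex $w$, Proposition \ref{prop:compare-subs-a} applies to compare $B_j$ with the reference sub-branch containing the global minimum $m_0$, giving a relation $c_1\y_{m^{(j)}} = c_2\y_{m_0}$ with $c_1,c_2\neq 0$. Chaining these produces, for each $r\neq m_0$ on a nonzero sub-branch, an equation $\y_r = c_r\y_{m_0}$ with $c_r\neq 0$ (equal to $1$ when $r$ lies in the same sub-branch as $m_0$, and a ratio of the nonzero constants otherwise). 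This is the second bullet and contributes $N-1$ equations, each with leading term $\y_r$. Summing, I obtain $Z + (N-1) = (L_0-1)-1 = L_0-2$ equations.

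For independence I would argue that the leading variables $\{\y_p\}$ (zero-sub-branch leaves) and $\{\y_r\}$ (nonzero-sub-branch leaves other than $m_0$) range over all leaves on the `$a$' branch except $a$ and $m_0$, so they are pairwise distinct. The coefficient matrix thus has an identity block on these $L_0-2$ columns together with a single extra column for $\y_{m_0}$, hence full rank, which gives both linear independence and the stated elimination. I do not expect a genuine analytic obstacle, as all the computational content sits in the three cited propositions; the only points requiring care are confirming that the combination of Propositions \ref{prop:equal-sub-a} and \ref{prop:compare-subs-a} yields a \emph{nonzero} constant $c_r$ for every $r$, and that each leaf other than $a$ and $m_0$ is the leading term of exactly one equation so that no two share a leading variable. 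Once these are checked, the distinct-leading-term structure delivers precisely the count and independence required by Strategy \ref{strat:summary}.
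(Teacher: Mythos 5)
Your proposal is correct and follows essentially the same route as the paper: the first bullet comes from combining Propositions \ref{prop:two-away-a} and \ref{prop:equal-sub-a}, the second from chaining Propositions \ref{prop:equal-sub-a} and \ref{prop:compare-subs-a}, and independence follows from the distinct leading terms $\y_p$, $\y_r$. Your explicit count $Z+(N-1)=L_0-2$ and the check that the chained constant $c_r$ is nonzero are just slightly more detailed versions of what the paper leaves implicit.
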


\begin{proof}
  Propositions \ref{prop:two-away-a} and \ref{prop:equal-sub-a} combine to provide the $\y_{p}=0$ equations.  Then, Propositions \ref{prop:equal-sub-a} and \ref{prop:compare-subs-a} provide the equations of the form $\y_{r}=c_r\y_{m_0}$. These are clearly linearly independent since they have distinct leading terms (ordering the $\y_i$'s by the value $i$).
\end{proof}
Note that if $a$ is the only element of its branch, i.e. $L_0=1$, then there are no equations obtained in this case.

\subsection{Equations along the \texorpdfstring{$k$}{k} branch}

We now analyze equations coming from comparing $i,m$ values along the $k$ branch at $v$.  Notice that every entry along the $k$ branch is larger than $k$, so we get no equations from the $i=k$ case (Lemma \ref{lem:i=k}).  Thus, Lemma \ref{lem:i>k} is the only type of equation that could apply, and notice also that $t_i=t_m=1$ for any $i,m$ along the $k$ branch.   Hence, the equation in Lemma \ref{lem:i>k} always has its right hand side equal to $0$ in this setting, and the left hand terms simplify as well; rearranging it becomes:
\begin{equation}\label{eqn:k-branch}
    s_m\y_m=(s_m-s_k)\y_i.
\end{equation}

\begin{definition}
A \textbf{sub-branch along the $k$ branch} is defined in an identical manner to a sub-branch along the $a$ branch, but for the branch at $v$ that contains $k$ (see Definition \ref{def:sub-branch}).
\end{definition}

The following diagram will come in handy throughout this subsection.

\begin{center}
    \includegraphics{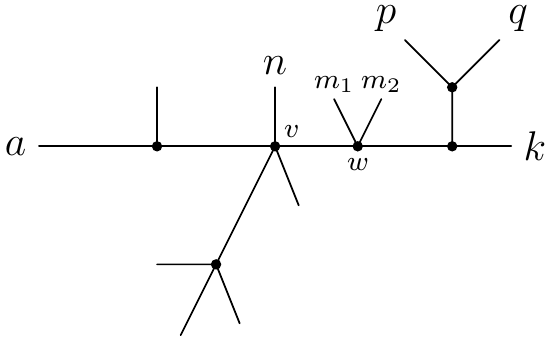}
\end{center}

\begin{prop}[Two away from $v$ on $k$ branch]\label{prop:two-away-k} 
   In $T$, suppose $q$ is the minimal element of a sub-branch $B$ off the $k$ branch that attaches at least 
   distance $2$
   away from $v$ on the path from $v$ to $k$. 
   Then on the tangent space, we have $\y_q=0$.
\end{prop}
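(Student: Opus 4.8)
The plan is to mirror the structure of the proof of Proposition~\ref{prop:two-away-a}, exploiting the simplification that every leaf on the $k$ branch carries a label larger than $k$. In particular $q > k$, so the only tangent equation available is the single type \eqref{eqn:k-branch} coming from Lemma~\ref{lem:i>k} (there are no $i=k$ equations here), and throughout we read off the needed coordinates $s_\bullet$ from the forgotten trees $T|_i$ using Proposition~\ref{prop:PsiCoordinates}.

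First I would fix notation for the path from $v = v_n$ to $k$, writing its vertices as $v = u_0, u_1, \dots, u_\ell$ with $k$ attached at $u_\ell$, and recording that the hypothesis ``distance at least $2$'' means $B$ attaches at some $u_j$ with $j \ge 2$. Since $q = \min(B)$, forgetting the labels larger than $q$ collapses $B$ to a single leaf at $u_j$, so that $v_q = u_j$ in $T|_q$. Because $T$ is stable, $u_1$ has degree at least $3$, so at least one sub-branch of the $k$ branch attaches strictly between $v$ and $u_j$; let $m^*$ be the smallest label occurring on any such intervening sub-branch. The proof then splits on whether $m^* < q$ or $m^* > q$.

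If $m^* < q$, I would apply \eqref{eqn:k-branch} with $i = q$ and $m = m^*$. In $T|_q$ the vertex $v_q = u_j$ separates $a$ from $k$, so Proposition~\ref{prop:PsiCoordinates} gives $s_k \ne 0$, while $m^*$ lies on the $a$-ward side of $u_j$, giving $s_{m^*} = 0$. The equation then collapses to $0 = -s_k \y_q$, forcing $\y_q = 0$.

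The case $m^* > q$ is the main obstacle: now there is no small label on the $a$-ward side of $u_j$ with which to annihilate the coefficient of $\y_q$ directly. The key move is to use $m^*$ as the \emph{larger} index instead, applying \eqref{eqn:k-branch} with $i = m^*$ and $m = q$ and working in $T|_{m^*}$, where $v_{m^*}$ is the attaching vertex $u_{j'}$ of $m^*$ with $j' < j$. Since both $q$ (at $u_j$) and $k$ (at $u_\ell$) lie on the $k$-ward side of $u_{j'}$, Proposition~\ref{prop:PsiCoordinates}(1) gives $s_q = s_k$, while Proposition~\ref{prop:PsiCoordinates}(2) gives $s_q \ne 0$; hence the coefficient $s_q - s_k$ on the right vanishes and \eqref{eqn:k-branch} becomes $s_q \y_q = 0$, again forcing $\y_q = 0$. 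I expect the only delicate point to be the bookkeeping of which branch of each $T|_i$ the leaves $a, k, q, m^*$ land on, together with checking that the relevant vertices stay stable (degree $\ge 3$) after forgetting; once the $s$-coordinates are identified, the algebra in both cases is immediate.
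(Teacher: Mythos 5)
Your proposal is correct and follows essentially the same route as the paper's proof: both arguments pick an intervening sub-branch between $v$ and $B$, compare its minimal label with $q$, and in each of the two resulting cases apply Equation \eqref{eqn:k-branch} with exactly the index assignments you describe (the paper's Case 1 is your $m^*>q$ case with $i=m^*$, $m=q$ yielding $s_q=s_k\neq 0$; its Case 2 is your $m^*<q$ case with $i=q$, $m=m^*$ yielding $s_{m^*}=0$, $s_k\neq 0$). The only difference is cosmetic: you justify the existence of an intervening sub-branch via stability of $u_1$ and take the global minimum $m^*$, whereas the paper takes an arbitrary intervening sub-branch and its minimum.
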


\begin{proof}
Let $B'$ be a sub-branch that attaches at a point strictly closer to $v$ than $B$ along the path from $k$ to $v$. 
Let $r$ be the minimal element of $B'$.  

\textbf{Case 1.} Suppose $r>q$.  Then we set $i=r$ and $m=q$, and we find that $s_m=s_k=1$, so Equation \eqref{eqn:k-branch} becomes $\y_m=0$ as desired.

\textbf{Case 2.}  Suppose $r<q$.  Then we set $i=q$ and $m=r$, and since $i$ is minimal in its sub-branch by the assumption in the Proposition statement, we have $s_m=0$ and $s_k=1$.  Thus, Equation \eqref{eqn:k-branch} becomes $\y_i=0$ as desired.
\end{proof}

\begin{prop}\label{prop:equal-sub-k}
  (Equality on sub-branches off $k$ branch.)  In the tree $T$, suppose $p$ and $q$ are in the same sub-branch off the $k$ branch.  Then we have the tangent equation $\y_p=\y_q$.
\end{prop}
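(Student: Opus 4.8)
The plan is to mirror the proof of Proposition \ref{prop:equal-sub-a} as closely as possible, substituting the $k$-branch version of the basic tangent equation, namely Equation \eqref{eqn:k-branch} in place of the two general lemmas. Since every leaf on the $k$ branch has value exceeding $k$, I expect that only Lemma \ref{lem:i>k} (and hence Equation \eqref{eqn:k-branch}) will be relevant here, so there should be no analogue of the two preliminary cases ($m_1, m_2 < k$) that appeared in Proposition \ref{prop:equal-sub-a}. This simplifies matters: I only need to run the induction-on-labels argument that constituted Case 3 of the $a$-branch proof.

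Concretely, let $B$ be the sub-branch off the $k$ branch containing both $p$ and $q$, and assume without loss of generality that $p > q$. I would induct on $p$. In the base case $p$ is minimal in $B$ and equals $q$, so there is nothing to prove. For the inductive step, suppose $\y_{q_1} = \cdots = \y_{q_r}$ where $q_1, \dots, q_r$ are the elements of $B$ smaller than $p$. Setting $i = p$, I claim that in the forgotten tree $T|_i$ there is at least one leaf $m$ from $B$ that $i$ separates from $a$; such an $m$ must lie among $q_1, \dots, q_r$. Because $m$ is in the same sub-branch $B$ as $i = p$, which attaches somewhere strictly past $v$ on the $k$ side, the point $m$ lies on the $k$ side of $v_i$, which forces $s_m \neq 0$, and moreover $s_m \neq s_k$ since $m$ is separated from $k$ in $T|_i$. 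Substituting into Equation \eqref{eqn:k-branch} and using that the left-hand coefficient $s_m$ is nonzero, I would solve to obtain $\y_p = \y_m$ (after confirming the resulting coefficient relation collapses to equality, using that $m$ and $p$ sit on the same branch so the relevant $s$-values coincide appropriately). Since $\y_m$ equals $\y_q$ by the inductive hypothesis, this yields $\y_p = \y_q$.

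The step I expect to require the most care is verifying the exact values of $s_m$ and $s_k$ that appear in Equation \eqref{eqn:k-branch} in this setting, and confirming that the equation genuinely collapses to $\y_p = \y_m$ rather than to a relation with a spurious nonzero coefficient. In the $a$-branch analogue one had $t_i = t_m = 0$ and $s_k \neq 0$ so the equation read $s_m \y_m - s_m \y_i = 0$; here, by contrast, $t_i = t_m = 1$ for leaves on the $k$ branch, which is precisely why Equation \eqref{eqn:k-branch} already has the simplified form $s_m \y_m = (s_m - s_k)\y_i$ with vanishing right-hand side of Lemma \ref{lem:i>k}. The main obstacle, then, is the clean bookkeeping of which coordinates $s_j$ vanish and which equal the chart value, using Proposition \ref{prop:PsiCoordinates} applied to the forgotten tree $T|_i$, to guarantee that the coefficient of $\y_i$ on the right and the coefficient of $\y_m$ on the left match up to force genuine equality of the two $\y$ variables rather than a degenerate relation.
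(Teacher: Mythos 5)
Your overall skeleton matches the paper's proof: since every leaf of the $k$ branch exceeds $k$, only Equation \eqref{eqn:k-branch} is in play, and one inducts on the labels within the sub-branch $B$, at each step picking an already-handled element $m$ of $B$ that $i=p$ separates from $a$ in $T|_i$ (such an $m$ exists because $v_i$ has degree at least $3$ and, as noted below, every leaf outside $B$ lies on the $a$-branch at $v_i$). However, the step you explicitly defer --- ``verifying the exact values of $s_m$ and $s_k$'' --- is precisely the content of the proof, and the facts you do assert are not sufficient to close it. From $s_m\neq 0$ and $s_m\neq s_k$ alone, Equation \eqref{eqn:k-branch} yields only $\y_m=\frac{s_m-s_k}{s_m}\,\y_i$, a proportionality with an unknown nonzero constant; chaining such relations gives $\y_p=c\,\y_q$ for some $c\neq 0$, not the asserted equality $\y_p=\y_q$.

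The missing observation is that $s_k=x_k^{(i)}=0$. Let $e$ be the attachment edge joining $B$ to the path from $v$ to $k$. In the inductive step $B$ contains at least two leaves of $T|_i$, so $e$ survives in $T|_i$ and separates $v_i$ from every leaf outside $B$ --- in particular from both $a$ and $k$. Hence $k$ and $a$ lie on the same branch at $v_i$, and Proposition \ref{prop:PsiCoordinates}(2) gives $x_k^{(i)}=0$. (This is where your picture goes astray: you place $m$ ``on the $k$ side of $v_i$,'' but $v_i$ sits inside $B$, off the $a$--$k$ path entirely, so $k$ is on the $a$ side; your inference $s_m\neq s_k$ is true but beside the point.) With $s_k=0$, Equation \eqref{eqn:k-branch} becomes $s_m\y_m=s_m\y_i$, and $s_m\neq 0$ forces $\y_m=\y_i$ exactly, which is what the induction needs. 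This is exactly how the paper's proof proceeds.
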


\begin{proof}
	Notice that since we are on the $k$ branch, we have $p,q>k$.  Also assume $p>q$, and set $i=p$, $m=q$. If $i$ and $m$ are the two smallest elements of their sub-branch $B$, then we have $s_m=1$ and $s_k=0$, so Equation \eqref{eqn:k-branch} becomes $\y_i=\y_m$.
	
	Now, we assume for strong induction that we have $\y_{m_1}=\y_{m_2}=\cdots=\y_{m_r}$ for the first $r$ smallest elements $m_1<m_2<\cdots<m_r$ of $B$, and show the claim holds for $i$ being the smallest element of $B$ greater than $m_r$.   In this case, we have that in $T|_i$, there is some element $m_t$ among $m_1,\ldots,m_r$ that $i$ separates from $a$, so for this value $m_t$ we have $s_{m_t}\neq 0$, and $s_k=0$, so Equation \eqref{eqn:k-branch} becomes $\y_i=\y_{m_t}$.  Thus, $\y_i$ is also equal to all the lower $\y_m$'s, and the induction is complete. 
\end{proof}

\begin{prop}\label{prop:compare-subs-k}
	Let $w$ be the vertex closest to $v$ along the path from $v$ to $k$, and suppose there are two sub-branches $B_1,B_2$ off the $k$ branch that attach at $w$.  Let $m_1,m_2$ be the minimal elements of $B_1,B_2$.  Then there are nonzero constants $c_1$ and $c_2$ such that $c_1\y_{m_1}=c_2\y_{m_2}$.
\end{prop}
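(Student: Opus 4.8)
The plan is to follow the template of the $a$-branch analogue, Proposition \ref{prop:compare-subs-a}, while exploiting a simplification special to the $k$ branch. Because $k$ is the minimal leaf not lying on the branch at $v=v_n$ containing $a$, and $B_1,B_2$ are sub-branches off the $k$ branch, every leaf of $B_1\cup B_2$ exceeds $k$; in particular $m_1,m_2>k$. Consequently we are never in the $i=k$ regime of Lemma \ref{lem:i=k}, so only Equation \eqref{eqn:k-branch} is available, and the two-case structure of Proposition \ref{prop:compare-subs-a} collapses into a single case.

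Concretely, I would relabel so that $i=\max(m_1,m_2)$ and $m=\min(m_1,m_2)$. Since $m$ and $i$ both lie on the $k$ branch at $v$, Proposition \ref{prop:PsiCoordinates} gives $t_m=t_i=t_k=1$, so Equation \eqref{eqn:k-branch} applies and reads $s_m\y_m=(s_m-s_k)\y_i$. It then remains only to check that both coefficients $s_m$ and $s_m-s_k$ are nonzero; with that in hand, setting $c_1=s_m-s_k$ and $c_2=s_m$ (up to the relabeling) yields the claimed relation $c_1\y_{m_1}=c_2\y_{m_2}$ with nonzero constants.

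The coefficient computation is where the tree geometry enters, and is the step I expect to require the most care. Since $i$ is minimal in its sub-branch, forgetting all leaves larger than $i$ collapses that sub-branch to the single leaf $i$ attached at the image of $w$, so that $v_i=w$ in $T|_i$; because $w$ is the vertex closest to $v$ with two sub-branches attached, it has degree at least $4$ (a $v$-edge, a $k$-direction edge, and the edges to $B_1,B_2$) and therefore survives stabilization. In $T|_i$, the leaf $m$ sits on the $B_2$-side branch at $w$ while $a$ sits on the $v$-side branch, so $w$ separates $m$ from $a$ and Proposition \ref{prop:PsiCoordinates}(2) gives $s_m\neq 0$. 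Likewise $k$ sits on the branch continuing from $w$ toward $k$, which is distinct from the branch carrying $m$, so $m$ and $k$ lie on different branches at $w$ and Proposition \ref{prop:PsiCoordinates}(1) gives $s_m\neq s_k$, hence $s_m-s_k\neq 0$. The only subtlety to track carefully is that $w$ genuinely retains these distinct branches after forgetting and stabilizing; this holds because $m$, $k$, and $a$ all survive (each of $m,k$ is less than $i$, and $a$ is never forgotten), so that removing $w$ still separates them, and this is the point I would verify most explicitly.
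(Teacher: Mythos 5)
Your proposal is correct and follows the same route as the paper: both set $i=\max(m_1,m_2)$, $m=\min(m_1,m_2)$, apply Equation \eqref{eqn:k-branch}, and reduce the claim to $s_m\neq 0$ and $s_m\neq s_k$. The paper simply asserts these two nonvanishing facts, whereas you justify them via Proposition \ref{prop:PsiCoordinates} applied at the image of $w$ in $T|_i$; that verification is accurate and consistent with the paper's intent.
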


\begin{proof}
    We assume without loss of generality that $m_1>m_2$, and note that both are greater than $k$.  Setting $i=m_1$ and $m=m_2$, we have that $s_m\neq 0$ and $s_m\neq s_k$, so Equation \eqref{eqn:k-branch} has nonvanishing constants on both sides, as desired.
\end{proof}

Just as in the $a$ branch, Propositions \ref{prop:two-away-k}, \ref{prop:equal-sub-k}, and \ref{prop:compare-subs-k} give us $L_k-2$ independent equations all together, where there are $L_k\ge 2$ leaves on the $k$ branch (including $k$ itself).
	
If instead $L_k=1$, that is, if $k$ is alone on its branch, then there are $0$ equations from this setting.  We summarize this result as follows, using the terms \textbf{nonzero} and \textbf{zero} respectively to refer to sub-branches of the $k$ branch that are distance one away from $v$ or more than one, respectively.

\begin{corollary}[Equations on the $k$ branch]\label{rmk:k-branch}
  If there are $L_k\ge 2$ leaves on the $k$ branch (including $k$), the following $L_k-2$ equations are independent tangent equations satisfied by the $\y$ variables:
  \begin{itemize}
      \item $\y_q=0$ whenever $q$ is on a zero sub-branch of the $k$ branch,
      \item $\y_{r}=c_r\y_{m_1}$ for some $c_r\neq 0$, where $r,m_1$ are on a nonzero sub-branch  and $m_1$ is the minimal leaf on all nonzero sub-branches. 
  \end{itemize}
  These eliminate the $\y$ variables corresponding to all $L_k$ leaves on the $k$ branch except for $k$ and $m_1$.
\end{corollary}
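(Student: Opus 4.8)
The plan is to assemble the corollary directly from the three structural propositions just established, mirroring the argument for the $a$ branch in Corollary \ref{rmk:a-branch}. First I would dispatch the zero sub-branches: by Proposition \ref{prop:two-away-k}, the minimal element $q$ of any zero sub-branch (one attaching at distance at least $2$ from $v$) satisfies $\y_q=0$, and then Proposition \ref{prop:equal-sub-k} propagates this equality along the sub-branch, yielding $\y_p=0$ for every leaf $p$ on a zero sub-branch.

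Next I would handle the nonzero sub-branches, namely those attaching at the vertex $w$ adjacent to $v$. Proposition \ref{prop:equal-sub-k} shows that all leaves within a single such sub-branch share a common $\y$-value, and Proposition \ref{prop:compare-subs-k} relates the minimal elements of two distinct nonzero sub-branches by a relation $c_1\y_{m_1}=c_2\y_{m_2}$ with nonzero constants. Chaining these together lets me express $\y_r=c_r\y_{m_1}$ for every leaf $r$ on a nonzero sub-branch, where $m_1$ is the globally minimal leaf among all nonzero sub-branches.

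Then I would count and verify independence. Of the $L_k$ leaves on the $k$ branch, one is $k$ itself and the remaining $L_k-1$ lie on sub-branches; the equations above eliminate every such leaf except $m_1$, giving $L_k-2$ equations. Since $m_1$ is minimal among the nonzero-sub-branch leaves, each equation of the second type has leading term $\y_r$ with $r>m_1$, while each equation of the first type has leading term $\y_q$. Because the leaf labels are pairwise distinct (and distinct from $k$ and $m_1$), ordering the $\y_i$ by the index $i$ shows the equations have distinct leading terms and hence are linearly independent.

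I do not expect any genuine obstacle, since all of the analytic content is already carried by Propositions \ref{prop:two-away-k}, \ref{prop:equal-sub-k}, and \ref{prop:compare-subs-k}, together with the fact that along the $k$ branch one always has $t_i=t_m=1$, so only Equation \eqref{eqn:k-branch} is in play. The only point requiring care is the bookkeeping that these $L_k-2$ equations eliminate exactly the intended variables, leaving $\y_k$ and $\y_{m_1}$ free; this reduces to checking that every sub-branch leaf other than $m_1$ occurs as a distinct leading term, which is immediate from the distinctness of the leaf labels.
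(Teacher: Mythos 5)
Your proposal is correct and matches the paper's argument: the paper proves this corollary exactly as it proves the $a$-branch analogue (Corollary \ref{rmk:a-branch}), namely by combining Propositions \ref{prop:two-away-k} and \ref{prop:equal-sub-k} for the zero sub-branches, Propositions \ref{prop:equal-sub-k} and \ref{prop:compare-subs-k} for the nonzero ones, and concluding independence from the distinct leading terms $\y_r$. Your bookkeeping of the count $L_k-2$ and the surviving variables $\y_k$, $\y_{m_1}$ is the same as the paper's.
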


\subsection{Equations within a \texorpdfstring{$t$}{t} branch}

We now consider the equations that arise from setting $i,m$ to be leaves on the same branch at $v$, besides the `$a$' and `$k$' branches.  We will call these branches `$t$'\textbf{ branches} since their $y$ coordinates are nonzero values of $t$.

\begin{center}
    \includegraphics{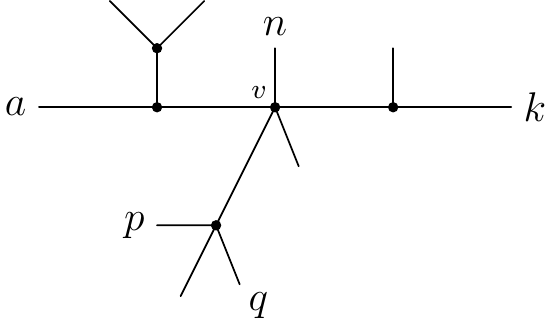}
\end{center}

We claim the following.

\begin{prop}[Equality mod $x$ on a $t$ branch]\label{prop:t-branch}
    Let $p,q$ be leaves on a non-$a$, non-$k$ branch $B_j$ at $v$.  Then there is an equation on the tangent space of the form $$\y_p=\y_q+\sum_{i<n} c_i\x_k^{(i)}$$  for some (possibly zero) constants $c_i$.
\end{prop}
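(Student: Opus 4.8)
The plan is to handle leaves $p,q$ on a $t$-branch by the same inductive strategy used for the $a$ and $k$ branches (Propositions \ref{prop:equal-sub-k} and \ref{prop:two-away-k}), but now tracking the $\x_k^{(i)}$ terms that no longer automatically vanish. Since $B_j$ is a non-$a$, non-$k$ branch at $v$, every leaf on it has $y$-coordinate equal to a common nonzero value, say $t_p=t_q=\beta$ for all leaves on $B_j$ in the $\PP^n$ coordinates. The key difference from the $k$ branch is that $t_i$ need not equal $1$, so Lemma \ref{lem:i>k} does not collapse as cleanly: the right-hand side $t_m(1-t_i)\x_k+(t_i-t_m)\x_m$ survives and contributes $\x$-terms. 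I would first record what Lemma \ref{lem:i>k} becomes when $i,m$ both lie on $B_j$, so that $t_i=t_m=\beta$; the coefficient $(t_i-t_m)$ of $\x_m$ then vanishes, leaving a relation of the shape $c\,\y_m - c\,\y_i = (\text{const})\,\x_k$ with matching coefficients on $\y_m$ and $\y_i$ after dividing by the (nonzero) leading coefficient.

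First I would treat the base case where $p,q$ are the two minimal elements of $B_j$: setting $i=p$, $m=q$ and substituting $t_i=t_m=\beta$, $s_m\neq 0$ (since $i$ separates $m$ from $a$ in $T|_i$) into the $i>k$ equation produces exactly $\y_p=\y_q+c\,\x_k$ for a single constant $c$, after dividing through. Then I would run the same strong induction as in Proposition \ref{prop:equal-sub-k}: assuming the claim holds for all smaller elements of $B_j$, take $i$ to be the next leaf and choose some $m$ among the already-processed leaves that $i$ separates from $a$ in $T|_i$ (such $m$ exists because $i$ is not minimal in its branch). This yields $\y_i = \y_m + c_i\,\x_k^{(i)}$; composing with the inductive relation $\y_m = \y_q + \sum c_\ell \x_k^{(\ell)}$ gives the desired form for $\y_i$. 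I would also need the analogue of Proposition \ref{prop:two-away-k} to know that $\y$-values on sub-branches further out still differ only by $\x_k$-terms; the same algebra applies since the only structural fact used is that the two entries share the common value $\beta$.

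The main obstacle I anticipate is bookkeeping the $\x_k^{(i)}$ coefficients across the induction and confirming they genuinely live in the $\PP^1\times\cdots\times\PP^{n-1}$ coordinates rather than introducing new $\y$-dependence. Concretely, in Lemma \ref{lem:i>k} the surviving right-hand side involves $\x_k$ (which is an $\x$-variable from a lower factor $\PP^i$) and $\x_m$; I must verify that the $\x_m$ coefficient $(t_i-t_m)$ is exactly zero on a $t$-branch, which is where the hypothesis $t_i=t_m=\beta$ is essential, so that no stray $\x_m$ term contaminates the relation. A second subtlety is that the constant $c$ in the base case may be zero or nonzero depending on whether $s_m-s_k$ and $s_k$ conspire; the statement only claims \emph{some} constants $c_i$, so I do not need to control their vanishing, but I do need each step to produce a genuine linear tangent equation, which requires the leading $\y$-coefficient to be nonzero — this follows from $s_m\neq 0$, exactly as in the $k$-branch argument. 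Once the base case and induction step are verified, the proposition follows by chaining the relations from $p$ down to the minimal element and back up to $q$, collecting all the $\x_k^{(i)}$ terms into the single sum $\sum_{i<n} c_i\x_k^{(i)}$.
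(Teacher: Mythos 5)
Your strategy is the same as the paper's: induct on the larger of the two indices, apply Lemma \ref{lem:i>k} with $i$ and $m$ both on $B_j$ so that $t_i=t_m=\beta$ kills the $\x_m$ term, and use $s_m\neq 0$ (with $m$ chosen so that $i$ separates $m$ from $a$ in $T|_i$) to normalize. However, there is one unjustified step, and your own discussion of it points the wrong way. From Lemma \ref{lem:i>k} with $t_i=t_m=\beta$, the coefficients of $\y_m$ and $\y_i$ are $s_m-s_k+s_k\beta$ and $\beta s_k-s_m$; these are negatives of one another only when $s_k(2\beta-1)=0$. So your assertion of ``matching coefficients on $\y_m$ and $\y_i$'' requires $s_k=x_k^{(i)}=0$, and without it you do not get unit coefficients on $\y_p$ and $\y_q$, i.e.\ the stated form of the equation fails. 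Your ``second subtlety'' paragraph treats $s_k$ as possibly nonzero and as affecting only the constant in front of $\x_k$ --- that is exactly where the argument would break.

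The fact you need is true and the paper states it explicitly: $s_k=0$ in every instance of the inductive step. The reason is that $B_j|_i$ contains at least two leaves $\le i$ (namely $i$ and the minimal element of $B_j$), so the vertex $v_i$ in $T|_i$ lies strictly inside the region of $B_j$ rather than on the path from $a$ to $k$; hence $a$ and $k$ lie on the same branch at $v_i$, and Proposition \ref{prop:PsiCoordinates}(2) gives $x_k^{(i)}=0$. Adding this one observation closes the gap, after which your base case and induction go through exactly as in the paper.
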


\begin{proof}
    We show this by induction on $\max(p,q)$.  For the base case, if $p=q$ is the minimum element on the branch then we automatically have $\y_p=\y_q$. 
    
    Now, suppose there exist such equations among all pairs $p,q$ on the branch less than $i_0$.  We show that there is also an equation of this form for $(p,q)=(i_0,m)$ for any $m<i_0$ on the branch.  Note that by the inductive hypothesis, it suffices to show this for any particular $m<i_0$; we choose an $m$ that $i_0$ separates from $v$ in $T|_{i_0}$, which exists because $T|_{i_0}$ is at least trivalent.  
    
    Then note that for $i=i_0$ we have $s_m\neq 0$, $s_k=0$, and $t_i=t_m=:t$ for some $t\neq 0,1$.  Thus, Lemma \ref{lem:i>k} becomes $$s_m(\y_m-\y_{i_0})=t(1-t)\x_k^{(i_0)}.$$  Since $s_m\neq 0$, this is easily rearranged into the desired form.
\end{proof}

We can enumerate the new independent equations we obtain from Proposition \ref{prop:t-branch} as follows.

\begin{corollary}[Equations on a $t$ branch] \label{rmk:other-branches}
  In a given `$t$' branch $B_t$, let $m_t$ be its minimal leaf, and suppose it has $L_t$ leaves total. Then the equations
  $$\y_p=\y_{m_t}+\sum_{i<n} c_i x_k^{(i)}$$ for each $p\neq m_t$ on $B_t$ are $L_t-1$ independent equations on the tangent space.
\end{corollary}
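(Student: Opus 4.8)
The plan is to read off the $L_t-1$ equations directly from Proposition \ref{prop:t-branch} and then verify linear independence by a leading-term inspection, exactly in the style of Corollaries \ref{rmk:a-branch} and \ref{rmk:k-branch}. The substantive work has already been done in Proposition \ref{prop:t-branch}, so this corollary is essentially a bookkeeping step, and I do not expect a genuine obstacle.

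First I would enumerate the equations. For each leaf $p\neq m_t$ on the branch $B_t$, I apply Proposition \ref{prop:t-branch} to the pair $(p,q)=(p,m_t)$. Since both $p$ and $m_t$ lie on the same branch $B_t$ at $v$, which is neither the `$a$' branch nor the `$k$' branch, the hypotheses of the proposition are satisfied, and it produces an equation
$$\y_p = \y_{m_t} + \sum_{i<n} c_i \x_k^{(i)}$$
with constants $c_i$ depending on $p$. As $p$ ranges over the $L_t-1$ leaves of $B_t$ other than $m_t$, this yields exactly $L_t-1$ tangent equations of the claimed form.

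Next I would check independence. Rewrite each equation in the normalized form $\y_p - \y_{m_t} - \sum_{i<n} c_i \x_k^{(i)} = 0$, and order the cotangent generators $\y_b,\y_c,\y_1,\dots,\y_{n-1}$ by increasing index. The only $\y$-variables appearing in the $p$-th equation are $\y_p$ and $\y_{m_t}$; since $m_t$ is the minimal leaf of $B_t$, we have $p > m_t$, so the leading $\y$-term is $\y_p$. In fact $\y_p$ occurs in no other equation of the family (in the $p'$-th equation the $\y$-variables are $\y_{p'}$ and $\y_{m_t}$, and $p\neq p', m_t$), so the equations have pairwise distinct, mutually exclusive leading terms. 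Hence they are linearly independent, which completes the argument.

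The only point requiring a moment of care is confirming that the shared summand $\y_{m_t}$ and the common $\x_k^{(i)}$ terms cannot conspire to produce a linear dependence. This is handled cleanly by the observation above: each equation is ``monic'' in a distinct $\y_p$ that appears nowhere else in the family, and the $\x$-variables are of a different type and so never interfere with these $\y_p$ leading terms. Thus any nontrivial linear combination retains a nonzero $\y_p$ coefficient, and the $L_t-1$ equations are independent.
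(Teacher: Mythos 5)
Your proposal is correct and matches the paper's own (very brief) argument: one equation per leaf $p\neq m_t$ via Proposition \ref{prop:t-branch}, with independence following because each equation contains a distinct variable $\y_p$ appearing in no other equation of the family. Your write-up simply spells out the leading-term inspection in more detail than the paper does.
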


\begin{proof}
  There is one such equation for each $p\neq m_j$, giving $L_t-1$ equations.  These are independent because they have distinct leading terms $\y_p$.
\end{proof}

\subsection{Comparing the branches}

Finally, suppose the number of `$t$' branches is $r\ge 1$.  We now find $r+1$ more independent equations (or $r$ or $r-1$ if one or both of $L_0,L_k$ is $1$, respectively) that are also independent of those found above, by comparing the minimal elements of each $t$ branch with each other, as well as with the nonzero sub-branches of the $a$ and $k$ branches.

In particular, let $m_1,\ldots,m_r$ be the minimal elements of the `$t$' branches, say in order $m_1<m_2<\cdots<m_r$.  Set $i=\max\{m_1,\ldots,m_r\}=m_r$.  

\begin{center}
    \includegraphics{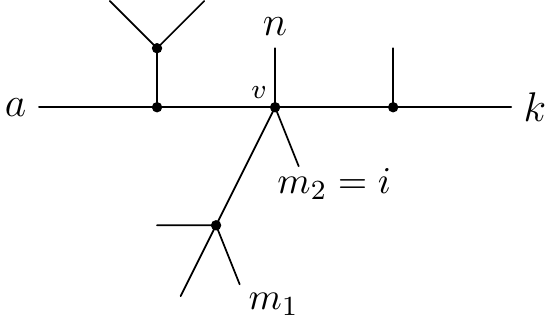}
\end{center}

Then we first have the following, which gives us $r-1$ new equations:

\begin{prop}[Comparing two $t$ branches] \label{prop:t-vs-t}
  For each $m=m_j$ for $1\le j<r$, we have an equation in the tangent space of the form $$c_1\y_i+c_2\y_{m}=d_1\x_k+d_2\x_{m}$$ for some nonzero constants $c_1,c_2,d_1,d_2$, where $i=m_r=\max\{m_1,\ldots,m_r\}$.
\end{prop}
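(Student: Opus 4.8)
The plan is to apply Lemma \ref{lem:i>k} directly with $i = m_r$ and $m = m_j$ for a fixed $j<r$, and to read off the four claimed coefficients from the resulting linear equation. Write $u := t_i = t_{m_r}$ and $w := t_m = t_{m_j}$ for the common $y$-values of the branches $B_r$ and $B_j$. Since these are distinct `$t$' branches at $v$, while the `$a$' and `$k$' branches carry the values $0$ and $1$ respectively, the monochromatic branch structure of $T$ at $v$ (Lemma \ref{lem:monochromatic}) gives $u,w \notin \{0,1\}$ and $u \neq w$.

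First I would verify that the chart index $k$ persists in the $\PP^{m_r}$ factor and record the nonvanishing of the relevant $x$-coordinates. Because $m_r$ is the minimal element of $B_r$, forgetting all leaves larger than $m_r$ collapses the branch $B_r$ down to the single leaf $m_r$ attached directly at $v$; hence $v_i = v$ in $T|_i$, and the leaves $a$, $k$, $m_j$, $m_r$ then lie on four distinct branches at $v_i$. By Proposition \ref{prop:PsiCoordinates}, writing $x^{(i)}$ for the $\PP^i$ coordinates, this gives $s_k := x_k^{(i)} \neq 0$ and $s_m := x_{m_j}^{(i)} \neq 0$. Substituting $t_k = 1$, $t_i = u$, $t_m = w$ into Lemma \ref{lem:i>k} then produces
\[
(s_m - s_k + s_k u)\,\y_m + (w s_k - s_m)\,\y_i = w(1 - u)\,\x_k + (u - w)\,\x_m,
\]
so that $d_1 = w(1-u)$ and $d_2 = u - w$, which are nonzero immediately from $w \neq 0$, $u \neq 1$, and $u \neq w$.

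The main obstacle is showing that the two $\y$-coefficients $c_2 = s_m - s_k(1-u)$ and $c_1 = w s_k - s_m$ do not vanish, as this does not follow from the linearized equation alone. Here I would invoke the degree-$0$ part of the original Monin--Rana equation \eqref{eq:general-MR} in columns $m$ and $k$, evaluated at the point $\vec{t}$; since $\vec{t}$ satisfies the Monin--Rana equations, this reads $s_m(w - u) = w\, s_k(1 - u)$. Using this relation to eliminate $s_k(1-u)$ and then $s_m$ respectively yields $c_2 = s_m u/w$ and $c_1 = w s_k(w-1)/(w - u)$, both manifestly nonzero given $s_m, s_k \neq 0$, $u,w \notin \{0,1\}$, and $u \neq w$.

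This produces the desired equation $c_1\y_i + c_2\y_m = d_1\x_k + d_2\x_m$ with all four constants nonzero. I expect the geometric bookkeeping in the second paragraph (confirming $v_i=v$ and extracting the coordinate nonvanishings from Proposition \ref{prop:PsiCoordinates}) and the algebraic use of the degree-$0$ Monin--Rana relation in the third paragraph to be the two points requiring care; the rest is substitution into Lemma \ref{lem:i>k}.
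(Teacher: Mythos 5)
Your proof is correct, and the setup is the same as the paper's: apply Lemma \ref{lem:i>k} with $i=m_r$, $m=m_j$, note $v_i=v$ in $T|_i$ so that $s_k,s_m\neq 0$ and $u,w\notin\{0,1\}$, $u\neq w$, and observe that the right-hand coefficients $w(1-u)$ and $u-w$ are trivially nonzero. Where you genuinely diverge is the crux of the proposition, the nonvanishing of the $\y$-coefficients $c_1=ws_k-s_m$ and $c_2=s_m-s_k(1-u)$. The paper argues geometrically: interpreting $s_m$, $t_m$, and $1-t_i$ as coordinates of the special points on the component of $C$ containing $n$ under two different coordinatizations (putting $n$, respectively the $i$-branch, at $\infty$ while fixing $0$ and $1$), it concludes $t_m\neq s_m$ and $s_m\neq 1-t_i$ because a non-identity M\"obius transformation fixing $0$ and $1$ has no other fixed points. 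You instead use the degree-zero part of the same Monin--Rana minor, $s_m(w-u)=ws_k(1-u)$ — exactly the relation the paper invokes to kill the constant terms in the proof of Lemma \ref{lem:i>k} — to rewrite $c_2=s_mu/w$ and $c_1=ws_k(w-1)/(w-u)$, which are manifestly nonzero. (One can check the two computations agree: in the chart $s_k=1$ both give $c_1=w(w-1)/(w-u)$ and $c_2=u(1-u)/(w-u)$.) Your route is purely algebraic and self-contained given the Monin--Rana relations at $\vec t$, avoiding any appeal to the curve $C$; the paper's route makes the geometric meaning of the nonvanishing transparent but requires the identification of $\vec t$ with a point of $\Omega_n(\Mbar_{0,S})$. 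The only cosmetic quibble is that the facts $u,w\notin\{0,1\}$, $u\neq w$, $s_k,s_m\neq 0$ are really instances of Proposition \ref{prop:PsiCoordinates} rather than of Lemma \ref{lem:monochromatic}, but the underlying reasoning is sound.
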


\begin{proof}
    Notice that we can set $s_k=1$ in Lemma \ref{lem:i>k}, so in this case it simplifies to 
    $$(s_m+t_i-1)\y_m+(t_m-s_m)\y_i=t_m(1-t_i)\x_k+(t_i-t_m)\x_m.$$  To see that all of these constants are nonzero, on the right hand side we clearly have $t_i\neq 1$, $t_m\neq 0$, and $t_i\neq t_m$ by construction.  
    
    For the left hand side, consider $t_m-s_m$; on the $n$'s component of the curve, if we coordinatize so that the $a$ branch is at $0$, the $k$ branch is $1$, and $n$ is at $\infty$, then $t_m$ is the coordinate of the $m$ branch.  Instead, if we recoordinatize so that the $i$ branch is at $\infty$ rather than $n$ (but $0$ and $1$ stay the same) then $s_m$ is the coordinate of the $m$ branch. Since these are two different coordinatizations of the same point we have $t_m\neq s_m$ so $t_m-s_m\neq 0$.
    
    Now consider the coefficient of $\y_m$, which we can write as $s_m-(1-t_i)$.  Notice that the projective transformation that sends $t_i$ to $\infty$ (but also sends $0\to 0$ and $1\to 1$) is $$z\mapsto \frac{z(1-t_i)}{z-t_i}=\frac{1-t_i}{1-(t_i/z)}.$$  We see that this map then sends $\infty$ to $1-t_i$, so $1-t_i$ is the coordinate of the marked point $n$ from the perspective of the $i$ branch on the $n$ component.  This is at a different point from $m$ from the perspective of $i$, so $s_m\neq 1-t_i$, and therefore $s_m-(1-t_i)$ is nonzero as desired.
 \end{proof}

Next, suppose $L_0>1$.  Then we have an additional equation between $m_r$ and the minimum element $m_0$ of all the nonzero sub-branches of the $a$ branch.  The following diagram illustrates the next two propositions.

\begin{center}
    \includegraphics{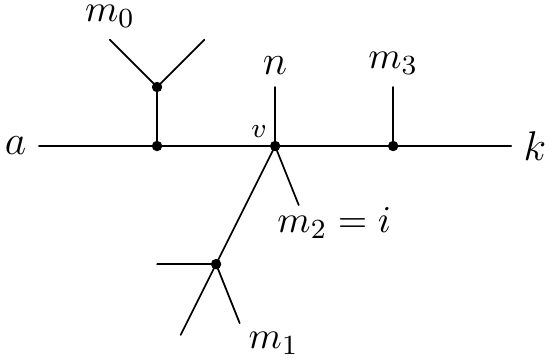}
\end{center}

\begin{prop}[Comparing a $t$ branch to the $a$ branch]\label{prop:t-vs-a}
  With $m_0$ defined as above, we have a tangent equation of the form $\y_{m_0}=c_1\x_k+c_2\x_{m_0}+c_3\x_{m_r}$ for some (possibly zero) constants $c_1,c_2,c_3$.
\end{prop}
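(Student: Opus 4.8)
The plan is to obtain the equation by a single application of Lemma \ref{lem:i>k} to the pair $\{m_0, m_r\}$, taking $i = \max(m_0, m_r)$ and $m = \min(m_0, m_r)$, and reading off the constants $s_j = x_j^{(i)}$ and $t_j = y_j$ from the tree $T$ via Proposition \ref{prop:PsiCoordinates}. Since $m_r$ is minimal on a $t$ branch we have $m_r > k$, and $m_0 \neq k$ because $m_0$ lies on the $a$ branch, so the hypothesis $i > k$ of Lemma \ref{lem:i>k} holds in either case. Throughout I record the two values that do not depend on the case: $t_{m_0} = 0$ (as $m_0$ is on the $a$ branch) and $t_{m_r} = t$ for some $t \neq 0,1$ (as $m_r$ is on a $t$ branch). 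I would then split into the cases $m_r > m_0$ and $m_0 > m_r$, since which index is $i$ controls which $\x$ coordinates are even defined in the $\PP^{i}$ factor, and hence the shape of the answer.

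In the first case, $m_r > m_0$, so $i = m_r$ and $m = m_0$. Because $m_r$ is minimal on its branch, $v_{m_r} = v$ in $T|_{m_r}$, and Proposition \ref{prop:PsiCoordinates} gives $s_{m_0} = 0$ (as $m_0$ and $a$ share the $a$ branch at $v$) and $s_k \neq 0$ (as $k$ and $a$ lie on different branches at $v$). Substituting these, together with $t_{m_0}=0$ and $t_{m_r}=t$, into the equation of Lemma \ref{lem:i>k} collapses it to
\[
s_k(t-1)\,\y_{m_0} = t\,\x_{m_0},
\]
which has the stated form with $c_1 = c_3 = 0$ and $c_2 \neq 0$ (and indeed $\x_{m_r}$ cannot appear, since it is not a coordinate of the factor $\PP^{m_r}$).

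In the second case, $m_0 > m_r$, so $i = m_0$ and $m = m_r$, and now $v_{m_0}$ lies on the $a$ branch strictly between $a$ and $v$. The key observation is that, from $v_{m_0}$ in $T|_{m_0}$, both $k$ and $m_r$ lie on the branch pointing toward $v$, so Proposition \ref{prop:PsiCoordinates}(1) forces $s_{m_r} = s_k$, and this common value is nonzero since both indices are separated from $a$. Substituting into Lemma \ref{lem:i>k}, the coefficient $s_{m_r} - s_k$ of $\y_{m_r}$ vanishes and the equation simplifies to
\[
s_k(t-1)\,\y_{m_0} = t\,(\x_k - \x_{m_r}),
\]
again of the required form (now with $c_2 = 0$ and $c_1 = -c_3 \neq 0$). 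In both cases the coefficient $s_k(t-1)$ of $\y_{m_0}$ is nonzero because $s_k \neq 0$ and $t \neq 1$, so we may normalize to solve for $\y_{m_0}$.

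The step I expect to be the crux is the vanishing of the $\y_{m_r}$ coefficient in the second case: it is precisely the relation $s_{m_r} = s_k$, coming from $m_r$ and $k$ sharing the branch toward $v$ as seen from $v_{m_0}$, that prevents a $\y_{m_r}$ term from surviving. This is what makes the equation usable downstream, since in the independence bookkeeping $\y_{m_r}$ must remain the single free tangent direction after all comparison equations are imposed, and a stray $\y_{m_r}$ here would collide with that surviving variable and destroy the distinct-leading-term structure.
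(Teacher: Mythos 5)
Your proof is correct and takes essentially the same route as the paper's: a single application of Lemma \ref{lem:i>k} to the pair $(m_0,m_r)$ with $i=\max$, $m=\min$, the same two-case split, and the same tree-based evaluation of the constants $s_j,t_j$ via Proposition \ref{prop:PsiCoordinates}. The only differences are cosmetic: the paper normalizes $s_k=1$ (the chart in $\PP^i$) where you keep $s_k$ general and instead invoke $s_{m_r}=s_k\neq 0$, and your Case 1 display is in fact the exact simplification, whereas the paper's Case 1 display carries a vacuous $t(1-t)\x_k$ term whose coefficient $t_m(1-t_i)$ vanishes because $t_m=0$.
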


\begin{proof}
  We consider two cases.
  
  \textbf{Case 1.} Suppose $m_0<m_r$.  Then we set $i=m_r$ and $m=m_0$, and in this case we have $t_m=s_m=0$.  We also have $s_k=1$, since $k$ is still the smallest element in the non-$a$ branches at $v_i$ in $T|_i$ since $i$ is minimal in its branch.  Setting $t=t_i$, Lemma \ref{lem:i>k} simplifies to $$(t-1)\y_m=t(1-t)\x_k+t\x_m,$$ and note that $t\neq 1$ so we can divide both sides by $t-1$ to get the result. 
  
  \textbf{Case 2.} Suppose $m_0>m_r$.  Then we set $i=m_0$ and $m=m_r$, and in this case we have $t_i=0$, and again $s_k=1$ since $i$ is the minimal element of the nonzero sub-branches of the $a$ branch, and therefore in $T|_i$, the leaf $k$ is the smallest element not on the $a$ branch from $v_i$.  We similarly have $s_m=1$ since $m$ is in the same branch as $k$ from $v_i$ in $T|_i$.  Thus, Lemma \ref{lem:i>k} simplifies to $$(t_m-1)\y_i=t_m\x_k-t_m\x_m.$$ Finally note that $t_m\neq 1$ since $m$ is on a different branch than $k$, and so we can divide both sides by $t_m-1$.
\end{proof}

Finally, suppose $L_k>1$.  Then we have an additional equation between $m_r$ and the minimal element $m_{r+1}$ of the nonzero sub-branches on the $k$ branch:

\begin{prop}[Comparing a $t$ branch to the $k$ branch]\label{prop:t-vs-k}
  With $m_{r+1}$ defined as above, we have a tangent equation of the form $\y_{m_{r+1}}=c_1\x_k+c_2\x_{m_{r+1}}+c_3\x_{m_r}$ for some (possibly zero) constants $c_1,c_2,c_3$.
\end{prop}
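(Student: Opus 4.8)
The plan is to mirror the proof of Proposition \ref{prop:t-vs-a}, splitting into the two cases $m_{r+1}>m_r$ and $m_{r+1}<m_r$ (note $m_{r+1}\neq m_r$, as one lies on the $k$ branch and the other on a $t$ branch). In each case I would apply Lemma \ref{lem:i>k} to the pair $\{m_r,m_{r+1}\}$, taking $i$ to be the larger of the two and $m$ the smaller, so that $i>k$ and $m<i$ as required. The substitution and cancellation are then identical to the earlier propositions in this subsection; the only content is reading off the four scalars $t_i,t_m,s_k,s_m$ from the tree, using Proposition \ref{prop:PsiCoordinates} to turn ``on the same branch as $a$ at $v_i$'' into $s_j=0$ and ``on the same branch as $k$ at $v_i$'' into $s_j=s_k=1$ (with $s_k=1$ pinned by the chart, since $k$ is the globally smallest leaf off the $a$ branch).

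In the case $m_{r+1}>m_r$, I set $i=m_{r+1}$ and $m=m_r$. Since $m_{r+1}$ lies on the $k$ branch we have $t_i=1$, while $t_m=t_{m_r}$ is a value $t\neq 0,1$. Because $m_{r+1}$ is minimal among the nonzero sub-branches of the $k$ branch, in $T|_{m_{r+1}}$ its sub-branch collapses and $v_{m_{r+1}}$ is the image of the vertex $w$ adjacent to $v$ on the $k$ branch; from $w$ both $a$ and $m_r$ sit on the branch pointing toward $v$, so they share a branch and $s_m=0$, whereas $k$ lies on the opposite side and remains the smallest non-$a$-branch leaf, giving $s_k=1$. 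Feeding $(t_i,t_m,s_k,s_m)=(1,t,1,0)$ into Lemma \ref{lem:i>k} annihilates the coefficient of $\y_{m_r}$ and collapses the equation to $t\,\y_{m_{r+1}}=(1-t)\x_{m_r}$, which after dividing by $t\neq 0$ has the desired form (with $c_1=c_2=0$).

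In the case $m_{r+1}<m_r$, I set $i=m_r$ and $m=m_{r+1}$, so $t_i=t_{m_r}=t\neq 0,1$ and $t_m=1$. Since $m_r$ is minimal in its own $t$ branch, $v_{m_r}$ is the image of $v$, and both $k$ and $m_{r+1}$ lie on the $k$ branch there, so $s_k=1$ and $s_m=s_{m_{r+1}}=s_k=1$. Substituting $(t_i,t_m,s_k,s_m)=(t,1,1,1)$ into Lemma \ref{lem:i>k} kills the coefficient of $\y_{m_r}$ and yields $t\,\y_{m_{r+1}}=(1-t)\x_k+(t-1)\x_{m_{r+1}}$, again of the claimed form after dividing by $t$ (with $c_3=0$).

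The only genuine obstacle is the branch-membership bookkeeping that supplies the $s$-values, namely that after forgetting the larger marked points the collapsed adjacent vertex is the image of $v$ (when $i=m_r$) or of $w$ (when $i=m_{r+1}$), and that $m_r$ and $a$ then share a branch in the second case while $m_{r+1}$ and $k$ share a branch in the first. These follow from the minimality of $m_r$ and $m_{r+1}$ in their respective sub-branches together with the defining minimality of $k$, exactly as in Propositions \ref{prop:two-away-a}--\ref{prop:t-vs-a}; everything downstream is the same routine expansion and cancellation already carried out in Lemma \ref{lem:i>k}.
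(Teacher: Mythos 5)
Your proof is correct and follows essentially the same route as the paper: the same case split on whether $m_{r+1}$ or $m_r$ is larger, the same application of Lemma \ref{lem:i>k} with the same readings of $t_i,t_m,s_k,s_m$ from the tree. In fact your explicitly simplified equations are the correct ones — the paper's displayed equations contain minor coefficient typos (a spurious $t_m\x_k$ term in its Case 2 and $t\x_m$ in place of $(t-1)\x_m$ in its Case 1) that do not affect the conclusion.
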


\begin{proof}
  We consider two cases.
  
  \textbf{Case 1.} Suppose $m_{r+1}<m_r$.  Then we set $i=m_r$ and $m=m_{r+1}$, and in this case we have $t_m=s_m=s_k=1$.  Setting $t=t_i$, Lemma \ref{lem:i>k} simplifies to $$t\y_m=(1-t)\x_k+t\x_m,$$ and note that $t\neq 0$ so we can divide both sides by $t$ to get the desired equation.
  
  \textbf{Case 2.} Suppose $m_{r+1}>m_r$.  Then we set $i=m_{r+1}$ and $m=m_r$, and in this case we have $t_i=1$, $s_k=1$, and $s_m=0$ since $m$ is now on the same branch as $a$ at $v_i$ in $T_i$.  Thus, Lemma \ref{lem:i>k} simplifies to 
  $$t_m\y_i=t_m\x_k+(1-t_m)\x_m.$$ Finally note that $t_m\neq 0$ since it is on a different branch than $a$, and so we can divide both sides by $t_m$ to obtain the result in this case (since $i=m_{r+1}$). 
\end{proof}

Putting together Propositions \ref{prop:t-vs-t}, \ref{prop:t-vs-a}, and \ref{prop:t-vs-k}, we have the following.

\begin{corollary}[Comparison equations with $t$ branches] \label{rmk:comparison-t}
  If $r\ge 1$ and we have $L_0,L_k\ge 2$, let $m_{1}<\cdots<m_r$ be the minimal elements of the $t$ branches, and let $m_0, m_{r+1}$ be the minimal elements of the nonzero sub-branches on the $a$ and $k$ branch respectively.   We have the following $r+1$ independent tangent equations:
  \begin{itemize}
      \item $\y_{m_j}=c_jy_{m_r}+\sum_{i,p<n}d_p^{(i)}\x_p^{(i)}$, for $j = 1, \ldots, r$,
      \item $\y_{m_0}=\sum_{i,p<n}e_p^{(i)}\x_p^{(i)}$
      \item $\y_{m_{r+1}}=\sum_{i,p<n}f_p^{(i)}\x_p^{(i)}$
  \end{itemize}
  for appropriate constants $c,d,e,f$, where the $c_j$ constants are nonzero.  Moreover, these equations are independent of the equations stated in Corollaries \ref{rmk:a-branch}, \ref{rmk:k-branch}, and \ref{rmk:other-branches}.
  
  When $L_0=1$ or $L_k=1$ (but not both) then either the second or third equation disappears and there are only $r$ independent equations, and when both are $1$ there are $r-1$. 
\end{corollary}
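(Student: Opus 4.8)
The plan is to assemble Propositions \ref{prop:t-vs-t}, \ref{prop:t-vs-a}, and \ref{prop:t-vs-k} into the stated list and then verify independence by a leading-term argument. First I would apply Proposition \ref{prop:t-vs-t}: for each $j$ with $1\le j<r$ it yields $c_1\y_{m_r}+c_2\y_{m_j}=d_1\x_k+d_2\x_{m_j}$ with all four constants nonzero (here the $\x$ terms are the $\PP^{m_r}$ coordinates $\x_k^{(m_r)}$ and $\x_{m_j}^{(m_r)}$, which are among the $\x_p^{(i)}$ with $i,p<n$). Dividing by $c_2\neq 0$ puts this in the first stated form $\y_{m_j}=c_j\y_{m_r}+\sum d_p^{(i)}\x_p^{(i)}$ with $c_j=-c_1/c_2\neq 0$, giving $r-1$ equations. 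Proposition \ref{prop:t-vs-a} (when $L_0\ge 2$) and Proposition \ref{prop:t-vs-k} (when $L_k\ge 2$) supply the remaining two equations, with leading terms $\y_{m_0}$ and $\y_{m_{r+1}}$ and only $\x$ variables on the right. Together this is $(r-1)+1+1 = r+1$ equations, with the stated reductions to $r$ or $r-1$ when one or both of $L_0,L_k$ equals $1$.

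Independence among these $r+1$ equations is the easy direction. Their leading terms $\y_{m_0},\y_{m_1},\dots,\y_{m_{r-1}},\y_{m_{r+1}}$ are pairwise distinct, since $m_0$, $m_{r+1}$, and $m_1,\dots,m_{r-1}$ label leaves on distinct branches at $v$. The crucial structural point is that the \emph{only} $\y$ variable appearing on any right-hand side is $\y_{m_r}$, so none of the $r+1$ leading variables occurs in another equation of the list. Thus, ordering the variables so that every $\x$ and $\y_{m_r}$ lies below the remaining minimal-element $\y$'s, each equation has a distinct leading term, and the list is linearly independent.

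The step I expect to require the most care is independence from the equations of Corollaries \ref{rmk:a-branch}, \ref{rmk:k-branch}, and \ref{rmk:other-branches}. I would argue as follows. Those corollaries have leading terms exactly the variables $\y_p$ for \emph{non-minimal} leaves $p$ on the $a$, $k$, and $t$ branches, and using them one can eliminate every such $\y_p$, leaving the minimal-element variables $\y_{m_0},\y_{m_1},\dots,\y_{m_r},\y_{m_{r+1}}$ and the $\x$'s as the surviving coordinates (recall that leaves $a$ and $n$ carry no $\PP^n$ coordinate and that $\y_k=0$ in the chosen chart). The key observation — and the reason the bookkeeping closes — is that each of the new $r+1$ equations already involves only these surviving $\y$'s, namely $\y_{m_r}$ together with the relevant $\y_{m_j}$, $\y_{m_0}$, or $\y_{m_{r+1}}$, and the $\x$ variables; \emph{no} non-minimal $\y_p$ appears. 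Consequently the Gaussian elimination performed with the earlier corollaries leaves the new equations unchanged, and their leading terms $\y_{m_0},\y_{m_1},\dots,\y_{m_{r-1}},\y_{m_{r+1}}$ are disjoint from the non-minimal leading terms of the earlier corollaries. A single variable order — non-minimal $\y_p$'s highest, then the minimal-element $\y$'s with $\y_{m_r}$ lowest among them, then the $\x$'s — therefore assigns a distinct pivot to every equation in the combined system, establishing that all of these equations are linearly independent, as claimed. The edge cases $L_0=1$ and $L_k=1$ only delete the $t$-vs-$a$ or $t$-vs-$k$ equation and their leading terms, leaving the same argument intact.
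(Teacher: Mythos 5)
Your proposal is correct and follows essentially the same route as the paper: it assembles Propositions \ref{prop:t-vs-t}, \ref{prop:t-vs-a}, and \ref{prop:t-vs-k} into the stated $(r-1)+1+1$ equations and observes that each expresses one of the surviving variables $\y_{m_0},\y_{m_1},\dots,\y_{m_{r-1}},\y_{m_{r+1}}$ (precisely those not eliminated by Corollaries \ref{rmk:a-branch}, \ref{rmk:k-branch}, and \ref{rmk:other-branches}) in terms of $\y_{m_r}$ and the $\x$ variables, yielding distinct pivots. Your version is in fact more explicit than the paper's brief argument about the variable ordering and why the elimination from the earlier corollaries does not disturb the new equations.
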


\begin{proof}
  These equations are again independent since they each express a distinct $y$ variable (that are precisely the ones not eliminated by Corollaries \ref{rmk:a-branch}, \ref{rmk:k-branch}, and \ref{rmk:other-branches}) in terms of $y_{m_r}$ and the lower $x$ variables.  There are $r-1$ equations of the first type and one each of the second and third.
\end{proof}

We can now put together the corollaries at the end of each subsection above to show that we have enough equations if $r>1$.

\begin{corollary}\label{cor:summary}
  If $r\ge 1$, the following give $n-1$ independent tangent equations involving the $\y$ variables:
  \begin{enumerate}
      \item $\y_{u}=c_u\y_{m_{0}}$ for some $c_u$ (possibly $0$), for each $u\neq a,m_0$ on the $a$ branch,
      \item $\y_{u}=c_u\y_{m_{r+1}}$ for some $c_u$ (possibly $0$), for each $u\neq k,m_{r+1}$ on the $k$ branch,
      \item $\y_u=\y_{m_j}+(\x\text{ terms})$ for $u\neq m_j$ on the $t$ branch with minimum $m_j$,
      \item $\y_{m_j}=c_j\y_{m_r}+(\x\text{ terms})$ for each $j\neq r$ in $\{0,1,2,\ldots,r+1\}$.
  \end{enumerate}
  If $r=1$ and either $L_0=1$ or $L_k=1$, then there are also a total of $n-1$ independent equations among the above cases.
\end{corollary}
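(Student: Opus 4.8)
The plan is to treat Corollary \ref{cor:summary} as a bookkeeping assembly of equations already produced in Corollaries \ref{rmk:a-branch}, \ref{rmk:k-branch}, \ref{rmk:other-branches}, and \ref{rmk:comparison-t}, and then to verify two things: that the combined list has exactly $n-1$ members, and that these members are linearly independent. First I would record the four families in the forms listed: the $L_0-2$ equations of type (1) from Corollary \ref{rmk:a-branch}, the $L_k-2$ equations of type (2) from Corollary \ref{rmk:k-branch}, the equations of type (3) from Corollary \ref{rmk:other-branches} (a family of $L_{t_j}-1$ equations for each of the $r$ many $t$ branches, where $L_{t_j}$ is the number of leaves on the $j$th $t$ branch), and the $r+1$ comparison equations of type (4) from Corollary \ref{rmk:comparison-t}. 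The statement of the corollary is exactly the union of these four families, so no new computation is needed beyond the preceding subsections.

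The counting step is a direct substitution. Since the branches at $v$ partition the $n+2$ leaves other than $n$, we have
\[
L_0 + L_k + \sum_{j=1}^{r} L_{t_j} = n+2.
\]
Summing the four families and using this identity gives
\[
(L_0-2) + (L_k-2) + \Big(\sum_{j=1}^{r} L_{t_j} - r\Big) + (r+1) = L_0 + L_k + \sum_{j=1}^{r} L_{t_j} - 3 = n-1,
\]
which is exactly the number of additional equations demanded by Strategy \ref{strat:summary}.

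For independence I would lean on the transversality observation preceding the setup: a linear form with a nonzero leading $\y$ term is automatically independent of the $\x$-only forms cutting out $(\mathrm{pr}_n)_*^{-1}(T_{\mathrm{pr}_n(\vec{x})}\MR_{n-1})$, and $n-1$ forms with distinct leading $\y$ variables are mutually independent by triangularity (the $\x$ terms appearing in types (3) and (4) are therefore harmless). So it suffices to check that the $n-1$ leading variables are pairwise distinct. Here the families interlock cleanly: type (1) together with the $j=0$ comparison equation supply every non-$a$ leaf of the $a$ branch as a leading term; type (2) together with the $j=r+1$ comparison equation supply every non-$k$ leaf of the $k$ branch; and type (3) together with the comparison equations for $j=1,\dots,r-1$ supply every leaf on the $t$ branches except $m_r$. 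The one free $\y$ variable that never appears as a leading term is $\y_{m_r}$, which instead serves as the pivot on the right-hand side of every type (4) equation; thus the leading variables are precisely all free $\y$ variables other than $\y_{m_r}$, all distinct.

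Finally I would dispose of the degenerate cases. If $L_0=1$ the $a$ branch is the single leaf $a$, so type (1) is empty and, as Corollary \ref{rmk:comparison-t} records, the $j=0$ comparison equation disappears; the total loses the $(L_0-2)$ contribution together with one comparison equation, and the same partition identity still returns $n-1$, with $\y_{m_r}$ remaining the unique omitted leading variable. The case $L_k=1$ is symmetric, and when both hold the comparison family contributes $r-1$; in each case the leading-variable bookkeeping is unchanged except that the corresponding family of leaves is absent. I expect the only real obstacle to be this combinatorial bookkeeping — confirming across all the boundary cases $L_0,L_k\in\{1,\ge 2\}$ that the four families neither overlap in their leading terms nor omit any free $\y$ variable besides $\y_{m_r}$ — since the underlying algebra has already been carried out in the preceding propositions.
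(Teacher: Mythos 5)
Your proposal is correct and follows essentially the same route as the paper: both proofs simply assemble the equations of Corollaries \ref{rmk:a-branch}, \ref{rmk:k-branch}, \ref{rmk:other-branches}, and \ref{rmk:comparison-t}, and both get independence from the fact that the equations have pairwise distinct leading $\y$ variables, namely the variables $\y_q$ for every leaf $q$ other than $a,k,n,m_r$ (the paper counts $(n+3)-4=n-1$ eliminated variables directly, while you count family sizes via $L_0+L_k+\sum_j L_{t_j}=n+2$; these are the same computation).

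The one place you diverge is the final sentence. You read ``$r=1$'' literally as ``one $t$ branch'' (consistent with the definition of $r$ at the start of the comparison subsection) and verify $n-1$ equations when $L_0=1$ or $L_k=1$ by dropping the corresponding comparison equation from Corollary \ref{rmk:comparison-t} --- which is fine, but that situation is already subsumed by your main count. The paper's own proof of that sentence instead treats ``$r=1$'' as meaning there are \emph{no} $t$ branches (consistent with the ``Final case'' subsection, where the exceptional trivalent case is labelled $r=1$ with $L_0,L_k>1$), in which case items (3) and (4) are empty and, say, $L_0=1$ forces $L_k=n+1$, so item (2) alone gives $L_k-2=n-1$ equations. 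Under your reading this zero-$t$-branch, $L_0=1$ or $L_k=1$ configuration is not addressed anywhere in your argument, yet it is needed for the induction in the main theorem (it is a non-exceptional case not covered by Proposition \ref{prop:extra}). The discrepancy originates in the paper's inconsistent use of $r$, but to be complete you should add the two-line argument for that configuration.
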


\begin{proof} 
  For $r\ge 1$, the equations (1) summarize Corollary \ref{rmk:a-branch} (combining the two possibilities $c_u = 0$, $c_u \ne 0$). The second, third, and fourth list items similarly summarize the equations arising in Corollaries \ref{rmk:k-branch}, \ref{rmk:other-branches}, and \ref{rmk:comparison-t}. 
  
  Taken together, these equations eliminate all $\y_q$ variables for $q$ being every leaf except $a,k,n,m_r$.  This gives $n-1$ equations since there are a total of $n+3$ leaves.
  
  If $r=1$, there are no $t$ branches, so the equations (3) and (4) do not apply. If $L_0 = 1$, list item (2) directly gives $n-1$ independent equations. Similarly, if $L_k = 1$, list item (1) gives $n-1$ independent equations.
\end{proof}

\subsection{Final case: \texorpdfstring{$n$}{n} on a trivalent component with two nodes}

In the previous section, we assumed $r>1$ (or $r=1$ and either $L_0$ or $L_k$ is $1$), meaning there was at least one branch at $v=v_n$ besides the $a$ and $k$ branches.  However, in the case that the vertex $v$ is trivalent and $L_0,L_k\ge 2$, Corollary \ref{cor:summary} does not apply. Indeed, in this case we have only $(L_0-2)+(L_k-2)=(n+3)-5=n-2$ tangent equations from the $a$ branch and $k$ branch. Assuming $r=1$ and $L_0,L_k>1$, we now find one final equation. As discussed in Remark \ref{rmk:why-exceptional}, this last equation must involve only the coordinates $x_j^{(i)}$ where $i < n$.

Let $m_0,m_1$ be the minima of the nonzero sub-branches on the $a$ and $k$ branch, respectively, as shown below.

\begin{center}
    \includegraphics{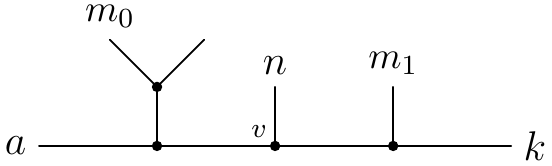}
\end{center}

\begin{prop}\label{prop:extra}
  Assume $r=1$ and $L_0, L_k > 1$.
  Let $m=\min(m_0,m_1)$ and $i=\max(m_0,m_1)$. Then $$\x_m^{(i)}=0.$$
\end{prop}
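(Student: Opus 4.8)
The statement concerns the exceptional case: $r=1$ (only the $a$ and $k$ branches at $v=v_n$, no $t$ branch) with $L_0,L_k\ge 2$, and $v$ trivalent. We have $m_0,m_1$ the minimal nonzero sub-branch elements on the $a$ and $k$ branches, with $m=\min(m_0,m_1)$, $i=\max(m_0,m_1)$. We want $\x_m^{(i)}=0$, an equation purely in the $\PP^1\times\cdots\times\PP^{n-1}$ coordinates. Let me think about the strategy.

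Let me understand the geometry. At $v_n$, since $v$ is trivalent with two non-leaf edges and $L_0,L_k\ge 2$, both the $a$-side and $k$-side components carry further structure. The key is that $m_0$ is minimal on a nonzero sub-branch of the $a$ branch (distance exactly $1$ from $v$) and $m_1$ is minimal on a nonzero sub-branch of the $k$ branch. I need to relate $x_m^{(i)}$ to the tree structure.

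**The approach I would take.** The equation $\x_m^{(i)}=0$ should come from a Monin–Rana tangent equation that, unlike the previous ones, has NO $\y$ term — so it lives entirely in the $\PP^{<n}$ coordinates. I would look for indices where the $y$-coefficients in Lemma \ref{lem:i>k} (or a combination of two such equations) cancel. The natural candidate is to write down the tangent equation from Lemma \ref{lem:i>k} with parameters $i=\max(m_0,m_1)$ and $m=\min(m_0,m_1)$, observe that in this exceptional geometry the coordinate $s_m^{(i)}=x_m^{(i)}$ behaves specially, and extract a relation. Concretely, I would compute $t_m,t_i,s_k,s_m$ for these two leaves. Since $m_0$ is on the $a$ branch, $y_{m_0}=0$; since $m_1$ is on the $k$ branch, $y_{m_1}=t_k=1$ (in our chart). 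So one of $t_m,t_i$ is $0$ and the other is $1$. Substituting these two extreme values into Lemma \ref{lem:i>k} should make the $\y_m,\y_i$ coefficients degenerate.

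**Key computation.** Suppose $m_0<m_1$, so $m=m_0$ (on the $a$ branch, $t_m=0$) and $i=m_1$ (on the $k$ branch, $t_i=1$). Plug $t_m=0$, $t_i=1$ into Lemma \ref{lem:i>k}: the coefficient of $\y_m$ is $(s_m-s_k+s_k t_i)=(s_m-s_k+s_k)=s_m$; the coefficient of $\y_i$ is $(t_m s_k - s_m)=-s_m$; the $\x_k$ coefficient is $t_m(1-t_i)=0$; the $\x_m$ coefficient is $(t_i-t_m)=1$. So the equation reads $s_m\y_m-s_m\y_i=\x_m$, i.e. $s_m(\y_m-\y_i)=\x_m$. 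This still has $\y$ terms, so I need $s_m=s_m^{(i)}=0$ to conclude $\x_m=\x_m^{(i)}=0$. So the real content is showing $s_m^{(i)}=x_m^{(i)}=0$ as a \emph{scalar}, i.e. the point itself satisfies $s_m^{(i)}=0$ — which then forces the linear tangent equation $\x_m^{(i)}=0$. This reduces to a statement about the tree: that in $T|_i$, leaf $m$ lies on the same branch at $v_i$ as $a$. I would verify this from the combinatorics — since $i=m_1$ is on the $k$ branch and minimal in its sub-branch, in $T|_i$ the leaf $m=m_0$ (on the $a$-side) is separated from $i$ together with $a$, giving $x_m^{(i)}=0$ by Proposition \ref{prop:PsiCoordinates}(2).

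**Main obstacle.** The subtle point is that $\x_m^{(i)}=0$ as a \emph{tangent equation} requires more than the scalar $s_m^{(i)}=0$: it requires that this coordinate vanishes identically to first order along $\MR_n$ near $\vec{t}$, not just at the point. So the real work is to produce a Monin–Rana minor (or linear combination of the cotangent-space relations \eqref{eqn:tangent-general}) whose linearization is exactly $\x_m^{(i)}$ with no other terms. I expect this comes from a minor of $\mathrm{Mat}_{i,n}$ involving columns $m$ and $k$ together with the vanishing established above, and I would need to check carefully that all other linear terms genuinely cancel given the precise $t$- and $s$-values dictated by the trivalent two-node geometry. Handling the symmetric subcase $m_0>m_1$ (where $m$ is on the $k$ branch, $t_m=1$, $t_i=0$) should be analogous by the same substitution into Lemma \ref{lem:i>k}, and I would present both together. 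This cancellation verification — confirming the $\y$-terms drop out to leave exactly $\x_m^{(i)}$ — is where the argument is most delicate.
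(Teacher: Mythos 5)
Your approach is the same as the paper's: substitute the scalar values $t_m,t_i,s_m,s_k$ dictated by the tree into Lemma \ref{lem:i>k} and watch the coefficients collapse. Your Case 1 is correct and essentially complete: once you establish $s_m = x_m^{(i)} = 0$ combinatorially (which you do, correctly, via Proposition \ref{prop:PsiCoordinates}(2)), your derived relation $s_m(\y_m-\y_i)=\x_m$ literally reads $\x_m=0$. The ``main obstacle'' you describe at the end is not an obstacle at all: the tangent equation \emph{is} the linearization of the Monin--Rana minor at the point $\vec{t}$, so its coefficients are precisely the scalars $s_\bullet,t_\bullet$ evaluated there; there is no further cancellation to verify, and no need to hunt for a different minor. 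You had already finished Case 1 before talking yourself out of it.

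The genuine gap is Case 2 ($m_0>m_1$), which you dismiss as ``analogous by the same substitution.'' It is not symmetric. There $m=m_1$ lies on the $k$ branch and $i=m_0$ on the $a$ branch, so the relevant scalars are $t_m=1$, $t_i=0$, $s_m=1$, $s_k=1$ --- in particular $s_m\neq 0$ --- and Lemma \ref{lem:i>k} collapses to $\x_m=\x_k$, not to $\x_m=0$. To close the argument you need the additional observation that $k$ is also the chart index at level $i$: since $i=m_0$ is the minimal leaf over \emph{all} nonzero sub-branches of the $a$ branch and there are no $t$ branches, $k$ is the smallest leaf in $T|_i$ not on the $a$ branch at $v_i$, hence $x_k^{(i)}$ is the coordinate normalized to $1$ and $\x_k^{(i)}=0$ identically. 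Only then does $\x_m^{(i)}=0$ follow. This extra step is exactly where the two cases differ, and your sketch omits it.
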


\begin{proof}
  \textbf{Case 1.}  Suppose $m_0<m_1$, so that $m=m_0$ and $i=m_1$.  Then we have $t_m=0$, $t_i=1$, $s_m=0$, $s_k=1$ so Lemma \ref{lem:i>k} becomes $\x_m=0$ as desired.
  
  \textbf{Case 2.}  Suppose $m_0>m_1$, so that $m=m_1$ and $i=m_0$.  Note that $m>k$ since $k$ is minimal in the $k$ branch, so $i>k$ as well. Then we have $t_i=0$, $t_m=1$, $s_m=1$, $s_k=1$ so Lemma \ref{lem:i>k} becomes $\x_m=\x_k$.  But since $k$ is also the chart at step $i$ (since $i$ is the minimal leaf on all nonzero sub-branches of the $a$ branch), we also have $\x_k=0$ in this setting.  Thus, $\x_m=0$ is again a tangent equation.
\end{proof}

\begin{example}\label{ex:example}
  We can now combine Proposition \ref{prop:extra} with the equations from Corollary \ref{cor:summary} to write out a full example of the equations defining the tangent space for a fixed curve.  Let $C$ be a curve whose dual tree is shown at left in Figure \ref{fig:example}, and set $t=s^{(4)}_c$ to be the parameter of the unique `$t$' branch in $\pi_5(C)$ (shown at right in Figure \ref{fig:example}).

\begin{figure}[b]
    \centering
    \includegraphics{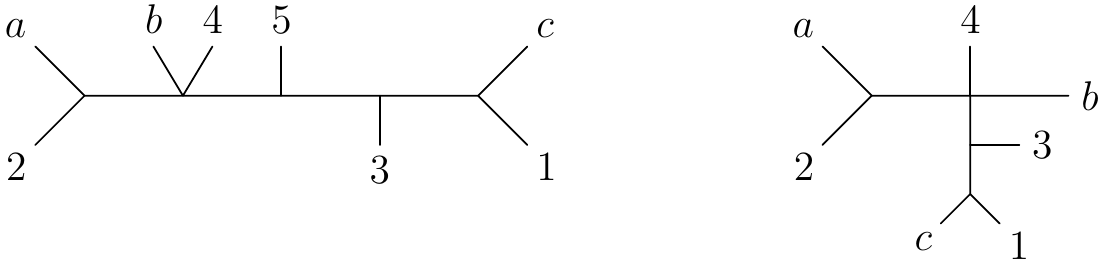}
    \caption{At left, the dual tree of the curve $C$ used in Example \ref{ex:example}.  At right, the dual tree of $\pi_5(C)$. }
    \label{fig:example}
\end{figure}
  
  Set the coordinates $\uu\times \vv\times\ww\times \x\times\y$ for the local tangent variables at $\Omega_5(C)$ in $\PP^1\times \PP^2\times \PP^3\times \PP^4\times \PP^5$ for simplicity of notation (avoiding superscripts). Then we have the following ten independent tangent equations:
  \begin{multicols}{3}
  \begin{itemize}
      \item $\y_2=0$ 
      \item $\y_1=0$
      \item $\y_4=(1-t)\y_b$
      \item $\ww_b=0$
      \item $\x_1=\x_c+t(1-t)\uu_b$
      \item $\x_3=\x_c+t(1-t)\ww_b$
      \item $\x_2=\frac{t}{1-t}(\vv_b-\vv_c)$
      \item[\vspace{\fill}]
      \item $\ww_2=0$ 
      \item $\uu_b=0$
      \item $\vv_1=\vv_c$
      \item[\vspace{\fill}]
  \end{itemize}
  \end{multicols}
  The four equations in the left hand column of the list above are the $5-1=4$ equations we get from Corollary \ref{cor:summary} and Proposition \ref{prop:extra}.  The next three are the equations we get from analyzing $\pi_5(C)$ using Corollary \ref{cor:summary}, since it is a non-exceptional curve.  The final three are the result of analyzing $\pi_4(\pi_5(C))$ and $\pi_3(\pi_4(\pi_5(C)))$ in a similar manner.
\end{example}

We finally show that the equation obtained in Proposition \ref{prop:extra} is indeed new, in the sense that it does not hold in the corresponding tangent space to $\Mbar_{0,S\setminus n}$.

\begin{lemma}\label{lem:nonzero-coord}
Let $C$ be a point in $\overline{M}_{0,S}$ and $T$ its dual tree. Suppose $n$ is on a component of $C$ with exactly three special points, and let $i$ and $m$ be as in Proposition~\ref{prop:extra}. Letting $C' = \pi_n(C)\in \overline{M}_{0,S\setminus n}$, the coordinate $\underline{x}_m^{(i)}$ is not identically zero on $T_{C'}\overline{M}_{0,S\setminus n}$.
\end{lemma}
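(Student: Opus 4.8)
The plan is to reduce the statement to a computation on a smaller moduli space and then exhibit a single explicit tangent direction—a node-smoothing—along which the coordinate $\underline{x}_m^{(i)}$ varies to first order.

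First I would use that the $\PP^i$ factor of $\Omega_{n-1}$ is $\psi_i\circ\phi$, where $\phi=\pi_{i+1}\circ\cdots\circ\pi_{n-1}:\overline{M}_{0,S\setminus n}\to \overline{M}_{0,\{a,b,c,1,\dots,i\}}$ is a forgetting map. Thus $x_m^{(i)}\circ\Omega_{n-1}$ factors through $\phi$, and since $\phi$ is a smooth morphism its differential $\phi_*$ is surjective on tangent spaces. Consequently $\underline{x}_m^{(i)}$ is not identically zero on $T_{C'}\overline{M}_{0,S\setminus n}$ as soon as the $m$-th coordinate of $\psi_i$ has nonzero differential at $\bar{C}':=\phi(C')$. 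This reduces everything to the single component $\localcurve$ of $\bar{C}'$ carrying the marked point $i$, on which $x_m^{(i)}$ is computed by the cross-ratio formula \eqref{eq:kap-interior} of Proposition~\ref{prop:PsiCoordinates}(3): in a suitable affine chart on $\PP^i$, $x_m^{(i)}$ is a non-constant M\"obius function of the position of the $m$-branch on $\localcurve$, and its value at $\bar{C}'$ coincides with the coordinate of a \emph{partner} branch (value $0$ if the partner is $a$, value $1$ if the partner is $k$), because $m$ and its partner sit at the same point of $\localcurve$.

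Next I would produce the tangent direction. Let $q$ be the node of $C'$ (equivalently of $\bar{C}'$) created by contracting the trivalent $n$-component of $C$ under $\pi_n$; this is exactly the ``lost'' direction of Remark~\ref{rmk:why-exceptional}, and I take $t_q$ to be its smoothing vector. When $m_0<m_1$ (so $m=m_0$, $i=m_1$) the partner is $a$, since $s_m=0$ places $m$ and $a$ on the same branch at $v_i$, i.e.\ both behind $q$; when $m_0>m_1$ (so $m=m_1$, $i=m_0$) the partner is $k$, since $s_m=s_k$. The structural claim is that $m$ and its partner separate already at the component $w$ adjacent to $q$: because $m_0$ (resp.\ $m_1$) is the minimal leaf on a nonzero sub-branch, that sub-branch attaches at the vertex adjacent to $v_n$, so $w$ is adjacent to $q$; and by Definition~\ref{def:sub-branch} the leaf $a$ (resp.\ $k$) lies on the trunk, hence in a \emph{different} branch at $w$. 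Thus $m$ and its partner occupy distinct points of $w$, and both $q$ and $w$ survive $\phi$ because each side of $q$ retains at least two kept labels ($a,m$ on one side and $k,i$ on the other, all $\le i$).

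Finally I would compute the derivative. Writing the smoothing of $q$ as $uv=\varepsilon$ with $u$ on $\localcurve$ and $v$ on $w$, a point at position $v_0$ on $w$ maps to $u=\varepsilon/v_0$ on $\localcurve$; since $m$ and its partner sit at distinct positions $v_m\neq v_a$ on $w$, their images on $\localcurve$ separate at first order, and substituting into the M\"obius expression for $x_m^{(i)}$ gives an $\varepsilon$-derivative at $\varepsilon=0$ equal to a nonzero scalar multiple of $(1/v_a-1/v_m)\neq 0$. Hence $\underline{x}_m^{(i)}(t_q)\neq 0$, which proves the lemma. The main obstacle is precisely the structural claim of the previous paragraph—that minimality forces $m$ and its partner into different branches at the single component adjacent to $q$, so that they separate at first order rather than being swept to a common position at higher order—together with the bookkeeping that $q$ and $w$ persist under $\phi$; once these are in hand, the cross-ratio derivative is routine.
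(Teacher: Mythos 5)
Your proposal is correct and follows essentially the same route as the paper: both proofs take the smoothing of the node $q = C_a\cap C_k$ as the explicit tangent direction and show that $\underline{x}_m^{(i)}$ varies to first order because $m$ and its partner ($a$ or $k$) sit at distinct points of the component adjacent to $q$ on the side away from $i$ (the paper writes the smoothing in explicit coordinates $q_j\mapsto 1+tq_j$ or $tq_j$ rather than via the $uv=\varepsilon$ model, but the computation is the same). One caveat worth noting: your opening claim that $\phi$ is a smooth morphism with surjective differential is false in general --- forgetful maps have nodal fibers, and Remark~\ref{rmk:why-exceptional} is precisely about such a failure of surjectivity --- but this does not damage the argument, since you ultimately push forward the single explicit vector $t_q$ and verify directly that the node $q$ and its adjacent components survive $\phi$.
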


\begin{figure}[b]

  \centering
  \includegraphics[scale=0.95]{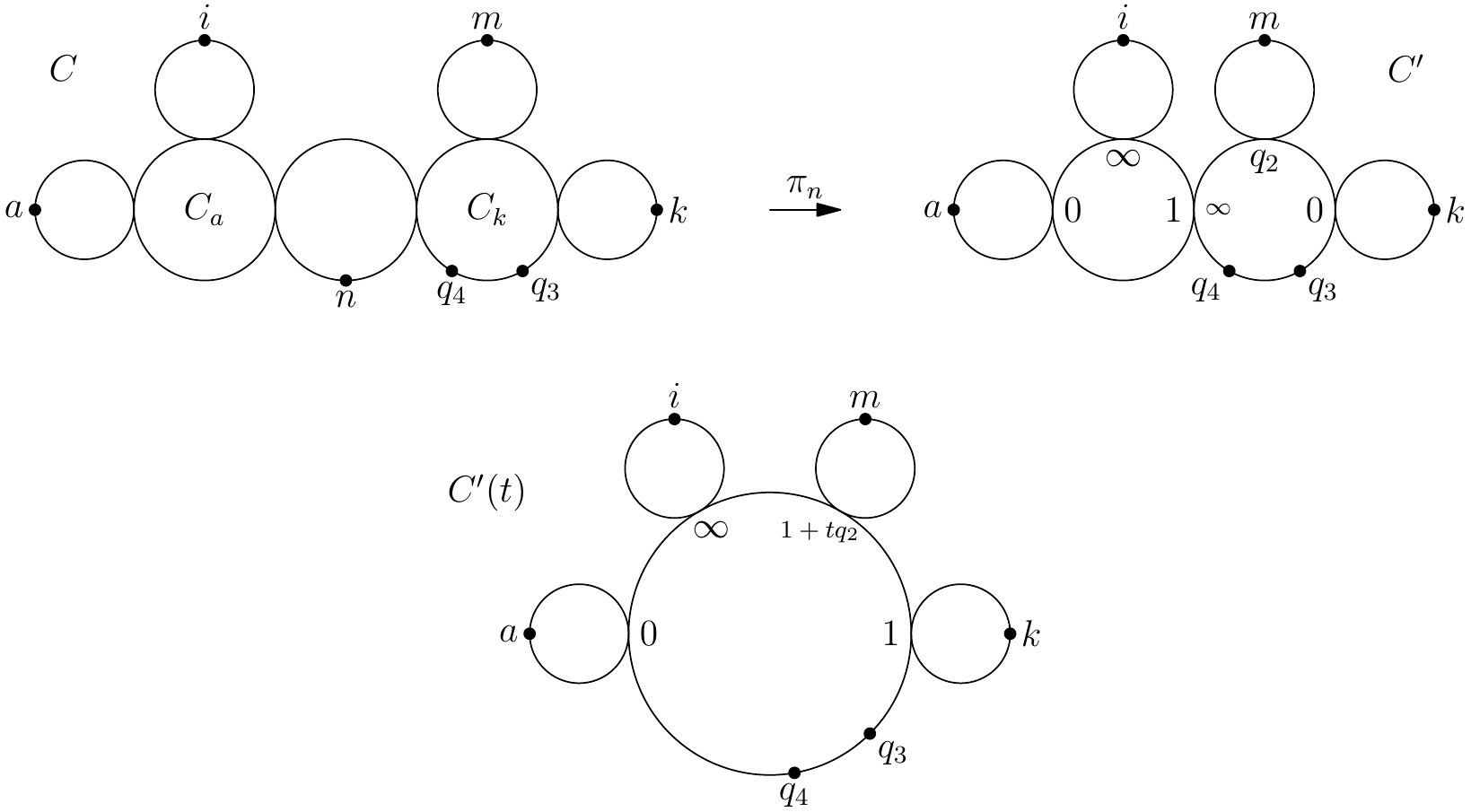}
  
\caption{Constructing a one-parameter path $C'(t)$ in $\Mbar_{0,S\setminus n}$ passing through $C'=C'(0)$ whose derivative is nonzero at $C'$ in the variable $\x^{(i)}_m$.}
    \label{fig:sean}
\end{figure}

\begin{proof}
  Let $v_a$ be the vertex of $T$ adjacent to $v$ on the $a$ branch and $v_k$ be the one on the $k$ branch. Let $C_a$ and $C_k$ be the components of $C'$ corresponding to these vertices, respectively, as shown in Figure \ref{fig:sean}.

  For a parameter $t$, we will define a curve $C'(t)\in \overline{M}_{0,S\setminus n}$ such that $C'(0)=C'$ and the tangent vector to $C'(t)$ at $t=0$ has $\x_m^{(i)}\neq 0$.  We consider the cases $m_0>m_1$ and $m_0<m_1$ separately.  
  
  \textbf{Case 1.}  Suppose $i = m_0$ and $m = m_1$. Choose coordinates on $C_a$ so that the special point closest to $i$ is at $\infty$, the node $C_a\cap C_k$ in $C'$ is at coordinate $1$, and the special point closest to $a$ is at coordinate $0$. Let $q_0,q_1,\dots, q_\ell$ be the special points on $C_k$ where $q_0$ is the node $C_a\cap C_k$, $q_1$ is the special point closest to $k$, and $q_2$ is the special point closest to $m$, and choose coordinates on $C_k$ so that $q_0 = \infty$ and $q_1 = 0$ (abusing notation by identifying $q_j$ with its coordinate on $C_k$). 
  
  Then we define $C'(t)$ to be the point of $\overline{M}_{0,S\setminus n}$ obtained by smoothing the node $C_a\cap C_k$ so that all special points on $C_a$ remain at the same coordinates and each $q_j$ is at coordinate $1+tq_j$. Then, by construction, we have that $C'(0) = C'$, and after setting $x^{(i)}_k = 1$ we have $x^{(i)}_m = 1+tq_2$.   Thus, the tangent vector to the curve $C'(t)$ at $t=0$ has $\underline{x}_m^{(i)}$ coordinate $q_2$, which is nonzero.
  
  \textbf{Case 2.} Suppose $i=m_1$ and $m = m_0$. Change coordinates on $C_k$ so that the special point closest to $a$ is at $0$, the special point closest to $i$ is at $\infty$, and the special point closest to $k$ is at $1$. Let $q_0,q_1,\dots, q_\ell$ be the special points on $C_a$ where $q_0$ is the the special point closest to $a$, $q_1$ is the node $C_a\cap C_k$, and $q_2$ is the special point closest to $m$. Change coordinates on $C_a$ so that $q_0 = 0$ and $q_1=\infty$. 
  
  Then we define $C'(t)$ to be the point of $\overline{M}_{0,S\setminus n}$ which is the result of ``smoothing'' the node $C_a\cap C_k$ so that all special points on $C_k$ remain at the same coordinates, and each $q_j$ is at coordinate $tq_j$. Then, by construction, we have $C'(0) = C'$, and after setting $x_k^{(i)} = 1$ we have $x_m^{(i)} = tq_2$. Thus, the tangent vector to the curve $C'(t)$ at $t=0$ has $\underline{x}_m^{(i)}$ coordinate $q_2$, which is nonzero.
\end{proof}

We can finally prove the main result.

\begin{MainThm}
  We have $\Omega_n(\Mbar_{0,S}) = \MR_n$ as subschemes of $\PP^1 \times \cdots \times \PP^n$.
\end{MainThm}

\begin{proof}
  We proceed by induction on $n$.  For the base case, $n=1$, the space $\Mbar_{0,\{a,b,c,1\}}$ is just $\PP^1$, the embedding $\Omega_1$ is an isomorphism, and there are no Monin--Rana equations.
  
  For the induction step, suppose the claim holds for $n-1$. Let $\vec{x} \in \MR_n \subseteq \PP^1\times \cdots \times \PP^n$, let $C$ be its associated curve and $T$ its dual tree.
  
  In the cases where Corollary \ref{cor:summary} applies, we obtain $n-1$ tangent equations with distinct leading terms in the $\PP^n$ variables. These equations are independent of the $\binom{n-1}{2}$ equations that we have by induction, since the latter only involve variables from $\mathbb{P}^1 \times \cdots \times \mathbb{P}^{n-1}$.

  In the last case, the marked point $n$ is on a component of $C$ of valency $3$, and $L_0, L_k > 1$. Then there are at least $n-2$ independent equations involving the $\PP^n$ variables, and by Lemma \ref{lem:nonzero-coord}, the equation $\x_m^{(i)}=0$ found in Proposition~\ref{prop:extra} is independent of them and of the previous $\binom{n-1}{2}$ equations.  This completes the proof. 
\end{proof}

\bibliography{myrefs}
\bibliographystyle{plain}

\end{document}